\tikzset{
	every loop/.style={very thick},
	comp/.style={circle,fill,black,inner sep=0pt,minimum size=5pt},
	order bottom left/.style={pos=.05,left,font=\tiny},
	order top left/.style={pos=.9,left,font=\tiny},
	order bottom right/.style={pos=.05,right,font=\tiny},
	order top right/.style={pos=.9,right,font=\tiny},
	order node dis/.style={text width=.75cm},
	circled number/.style={circle, draw, inner sep=0pt, minimum size=12pt},
	below left with distance/.style={below left,text height=10pt},
    below right with distance/.style={below right,text height=10pt}
	}
    \newcommand*{\@gobblenexttocentry}[9]{}
    \newcommand*{\@gobblenexttocentry}[4]{}
\newcommand*{\addsubsection}{%
    \addtocontents{toc}{\protect\@gobblenexttocentry}%
    \subsection*}
\begin{document}

\def\subsectionautorefname{Section}
\def\subsubsectionautorefname{Section}
\def\sectionautorefname{Section}
\def\equationautorefname~#1\null{(#1)\null}

\newcommand{\mynewtheorem}[4]{
  % #1=env name
  % #2=displayed name
  % #3=counter to share
  % #4=counter to obey
  \if\relax\detokenize{#3}\relax %#3 empty
    \if\relax\detokenize{#4}\relax %#4 empty
      \newtheorem{#1}{#2}
    \else
      \newtheorem{#1}{#2}[#4]
    \fi
  \else
    \newaliascnt{#1}{#3}
    \newtheorem{#1}[#1]{#2}
    \aliascntresetthe{#1}
  \fi
  \expandafter\def\csname #1autorefname\endcsname{#2}
}

%%%%%%%%%%%%%%%%%%%%%%%%%%%%
%%%% Benutzerteil: %%%%%%%%%%%%%%%%%
%%%%%%%%%%%%%%%%%%%%%%%%%%%%
\mynewtheorem{theorem}{Theorem}{}{section}
\mynewtheorem{lemma}{Lemma}{theorem}{}
\mynewtheorem{rem}{Remark}{lemma}{}
\mynewtheorem{prop}{Proposition}{lemma}{}
\mynewtheorem{cor}{Corollary}{lemma}{}
\mynewtheorem{add}{Addendum}{lemma}{}
\mynewtheorem{definition}{Definition}{lemma}{}
\mynewtheorem{question}{Question}{lemma}{}
\mynewtheorem{assumption}{Assumption}{lemma}{}
\mynewtheorem{example}{Example}{lemma}{}

%%%%%%%%%%%%%%%%%%%%%%%%%%%%
%%%%%%%%%%%%%%%%%%%%%%%%%%%%

\def\defbb#1{\expandafter\def\csname b#1\endcsname{\mathbb{#1}}}
\def\defcal#1{\expandafter\def\csname c#1\endcsname{\mathcal{#1}}}
\def\deffrak#1{\expandafter\def\csname frak#1\endcsname{\mathfrak{#1}}}
\def\defop#1{\expandafter\def\csname#1\endcsname{\operatorname{#1}}}
\def\defbf#1{\expandafter\def\csname b#1\endcsname{\mathbf{#1}}}

\makeatletter
\def\defcals#1{\@defcals#1\@nil}
\def\@defcals#1{\ifx#1\@nil\else\defcal{#1}\expandafter\@defcals\fi}
\def\deffraks#1{\@deffraks#1\@nil}
\def\@deffraks#1{\ifx#1\@nil\else\deffrak{#1}\expandafter\@deffraks\fi}
\def\defbbs#1{\@defbbs#1\@nil}
\def\@defbbs#1{\ifx#1\@nil\else\defbb{#1}\expandafter\@defbbs\fi}
\def\defbfs#1{\@defbfs#1\@nil}
\def\@defbfs#1{\ifx#1\@nil\else\defbf{#1}\expandafter\@defbfs\fi}
\def\defops#1{\@defops#1,\@nil}
\def\@defops#1,#2\@nil{\if\relax#1\relax\else\defop{#1}\fi\if\relax#2\relax\else\expandafter\@defops#2\@nil\fi}
\makeatother

%%%%%%%%%%%%%%%%%%%%%%%%%%%%
%%%% Benutzerteil: %%%%%%%%%%%%%%%%%
%%%%%%%%%%%%%%%%%%%%%%%%%%%%
\defbbs{ZHQCNPALRVW}
\defcals{ABOPQMNXYLTRAEHZKCFI}
\deffraks{apijklgmnopqueRC}
\defops{IVC, PGL,SL,mod,Spec,Re,Gal,Tr,End,GL,Hom,PSL,H,div,Aut,rk,Mod,R,T,Tr,Mat,Vol,MV,Res,Hur, vol,Z,diag,Hyp,hyp,hl,ord,Im,ev,U,dev,c,CH,fin,pr,Pic,lcm,ch,td,LG,id,Sym,Aut,Log,tw,irr,discrep,BN,NF,NC,age,hor,lev,ram,NH,av,app,mid}
\defbfs{cuvzwp} % only use this for small letters, otherwise you go directly to hell!!!
%%%%%%%%%%%%%%%%%%%%%%%%%%%%
%%%%%%%%%%%%%%%%%%%%%%%%%%%%

\def\ep{\varepsilon}
\def\ve{\varepsilon}
\def\abs#1{\lvert#1\rvert}
\def\dd{\mathrm{d}}
\def\WP{\mathrm{WP}}
\def\inj{\hookrightarrow}
\def\eq{=}

\def\i{\mathrm{i}}
\def\e{\mathrm{e}}
\def\st{\mathrm{st}}
\def\ct{\mathrm{ct}}
\def\rel{\mathrm{rel}}
\def\odd{\mathrm{odd}}
\def\even{\mathrm{even}}

\def\uC{\underline{\bC}}
\def\ol{\overline}
  
\def\Vrel{\bV^{\mathrm{rel}}}
\def\Wrel{\bW^{\mathrm{rel}}}
\def\twolev{\mathrm{LG_1(B)}}

%%%%%%%%%%%%%% equations
\def\be{\begin{equation}}   \def\ee{\end{equation}}     \def\bes{\begin{equation*}}    \def\ees{\end{equation*}}
\def\ba{\be\begin{aligned}} \def\ea{\end{aligned}\ee}   \def\bas{\bes\begin{aligned}}  \def\eas{\end{aligned}\ees}
\def\={\;=\;}  \def\+{\,+\,} \def\m{\,-\,}

%%%%%%%%%%%%% moduli spaces
\newcommand*{\proj}{\mathbb{P}}
\newcommand{\IVCst}[1][\mu]{{\mathcal{IVC}}({#1})}
\newcommand{\barmoduli}[1][g]{{\overline{\mathcal M}}_{#1}}
\newcommand{\moduli}[1][g]{{\mathcal M}_{#1}}
\newcommand{\omoduli}[1][g]{{\Omega\mathcal M}_{#1}}
\newcommand{\modulin}[1][g,n]{{\mathcal M}_{#1}}
\newcommand{\omodulin}[1][g,n]{{\Omega\mathcal M}_{#1}}
\newcommand{\zomoduli}[1][]{{\mathcal H}_{#1}}
\newcommand{\barzomoduli}[1][]{{\overline{\mathcal H}_{#1}}}
\newcommand{\pomoduli}[1][g]{{\proj\Omega\mathcal M}_{#1}}
\newcommand{\pomodulin}[1][g,n]{{\proj\Omega\mathcal M}_{#1}}
\newcommand{\pobarmoduli}[1][g]{{\proj\Omega\overline{\mathcal M}}_{#1}}
\newcommand{\pobarmodulin}[1][g,n]{{\proj\Omega\overline{\mathcal M}}_{#1}}
\newcommand{\potmoduli}[1][g]{\proj\Omega\tilde{\mathcal{M}}_{#1}}
\newcommand{\obarmoduli}[1][g]{{\Omega\overline{\mathcal M}}_{#1}}
\newcommand{\obarmodulio}[1][g]{{\Omega\overline{\mathcal M}}_{#1}^{0}}
\newcommand{\otmoduli}[1][g]{\Omega\tilde{\mathcal{M}}_{#1}}
\newcommand{\pom}[1][g]{\proj\Omega{\mathcal M}_{#1}}
\newcommand{\pobarm}[1][g]{\proj\Omega\overline{\mathcal M}_{#1}}
\newcommand{\pobarmn}[1][g,n]{\proj\Omega\overline{\mathcal M}_{#1}}
\newcommand{\princbound}{\partial\mathcal{H}}
\newcommand{\omoduliinc}[2][g,n]{{\Omega\mathcal M}_{#1}^{{\rm inc}}(#2)}
\newcommand{\obarmoduliinc}[2][g,n]{{\Omega\overline{\mathcal M}}_{#1}^{{\rm inc}}(#2)}
\newcommand{\pobarmoduliinc}[2][g,n]{{\proj\Omega\overline{\mathcal M}}_{#1}^{{\rm inc}}(#2)}
\newcommand{\otildemoduliinc}[2][g,n]{{\Omega\widetilde{\mathcal M}}_{#1}^{{\rm inc}}(#2)}
\newcommand{\potildemoduliinc}[2][g,n]{{\proj\Omega\widetilde{\mathcal M}}_{#1}^{{\rm inc}}(#2)}
\newcommand{\omoduliincp}[2][g,\lbrace n \rbrace]{{\Omega\mathcal M}_{#1}^{{\rm inc}}(#2)}
\newcommand{\obarmoduliincp}[2][g,\lbrace n \rbrace]{{\Omega\overline{\mathcal M}}_{#1}^{{\rm inc}}(#2)}
\newcommand{\obarmodulin}[1][g,n]{{\Omega\overline{\mathcal M}}_{#1}}
\newcommand{\LTH}[1][g,n]{{K \overline{\mathcal M}}_{#1}}
\newcommand{\PLS}[1][g,n]{{\bP\Xi \mathcal M}_{#1}}

\DeclareDocumentCommand{\LMS}{ O{\mu} O{g,n} O{}}{\Xi\overline{\mathcal{M}}^{#3}_{#2}(#1)}
\DeclareDocumentCommand{\Romod}{ O{\mu} O{g,n} O{}}{\Omega\mathcal{M}^{#3}_{#2}(#1)}

\newcommand*{\Tw}[1][\Lambda]{\mathrm{Tw}_{#1}}  %twist group
\newcommand*{\sTw}[1][\Lambda]{\mathrm{Tw}_{#1}^s}  %simple twist group

%mathcal-AB
\def\OO{\mathcal{O}}
\def\mm{\overline{\mathcal{M}}}

\newcommand{\HH}{{\mathbb H}}
\newcommand{\MM}{{\mathbb M}}
\newcommand{\bbC}{{\mathbb C}}

% Boldface letters
\newcommand{\bfa}{{\bf a}}
\newcommand{\bfb}{{\bf b}}
\newcommand{\bfc}{{\bf c}}
\newcommand{\bfd}{{\bf d}}
\newcommand{\bfe}{{\bf e}}
\newcommand{\bff}{{\bf f}}
\newcommand{\bfg}{{\bf g}}
\newcommand{\bfh}{{\bf h}}
\newcommand{\bfm}{{\bf m}}
\newcommand{\bfn}{{\bf n}}
\newcommand{\bfp}{{\bf p}}
\newcommand{\bfq}{{\bf q}}
\newcommand{\bft}{{\bf t}}
\newcommand{\bfP}{{\bf P}}
\newcommand{\bfR}{{\bf R}}
\newcommand{\bfU}{{\bf U}}
\newcommand{\bfu}{{\bf u}}
\newcommand{\bfx}{{\bf x}}
\newcommand{\bfz}{{\bf z}}

\newcommand{\bfl}{{\boldsymbol{\ell}}}
\newcommand{\bfmu}{{\boldsymbol{\mu}}}
\newcommand{\bfeta}{{\boldsymbol{\eta}}}
\newcommand{\bftau}{{\boldsymbol{\tau}}}
\newcommand{\bfomega}{{\boldsymbol{\omega}}}
\newcommand{\bfsigma}{{\boldsymbol{\sigma}}}

\newcommand{\wh}{\widehat}
\newcommand{\wt}{\widetilde}

\newcommand{\ps}{\mathrm{ps}}  
\newcommand{\CPT}{\mathrm{CPT}}  
\newcommand{\NCT}{\mathrm{NCT}}  
\newcommand{\RBD}{\mathrm{RBD}}
\newcommand{\ETD}{\mathrm{ETD}}
\newcommand{\OCT}{\mathrm{OCT}}
\newcommand{\SRT}{\mathrm{SRT}}
\newcommand{\RBT}{\mathrm{RBT}}

\newcommand{\tdpm}[1][{\Gamma}]{\mathfrak{W}_{\operatorname{pm}}(#1)}
\newcommand{\tdps}[1][{\Gamma}]{\mathfrak{W}_{\operatorname{ps}}(#1)}

\newlength{\halfbls}\setlength{\halfbls}{.8\baselineskip}

\newcommand*{\Teichmuller}{Teich\-m\"uller\xspace}

\DeclareDocumentCommand{\MSfun}{ O{\mu} }{\mathbf{MS}({#1})}
%% THIS ONE HAS NEITHER GAMMA NOR LAMBDA!!!
\DeclareDocumentCommand{\MSgrp}{ O{\mu} }{\mathcal{MS}({#1})}
\DeclareDocumentCommand{\MScoarse}{ O{\mu} }{\mathrm{MS}({#1})}
\DeclareDocumentCommand{\tMScoarse}{ O{\mu} }{\widetilde{\mathbb{P}\mathrm{MS}}({#1})}

\newcommand{\kmin}{\kappa_{(2g-2)}}
\newcommand{\ktop}{\kappa_{\mu_\Gamma^{\top}}}
\newcommand{\kbot}{\kappa_{\mu_\Gamma^{\bot}}}

\def\changed#1{\textcolor{red}{#1}}

%%%%%%%%%%%%%%%%%%%%%%%%%
%%%%%%%%%%%%%%%%%%%%%%%%%

\title[Kodaira dimension of even spin strata]
      {The Kodaira dimension of even spin strata of Abelian differentials}

\author{Andrei Bud}
\address{Institut f\"ur Mathematik, Goethe-Universit\"at Frankfurt,
	Robert-Mayer-Str. 6-8,
	60325 Frankfurt am Main, Germany}
\email{andreibud95@protonmail.com}
\thanks{Research of A.B.\ is supported by the Collaborative Research Centre
	TRR 326 ``Geometry and Arithmetic of Uniformized Structures.''}

\author{Dawei Chen}
      \thanks{Research of D.C. is supported in part by National Science Foundation Grant DMS-2301030, 
        Simons Travel Support for Mathematicians, and a Simons Fellowship.}
\address{Department of Mathematics, Boston College, Chestnut Hill, MA 02467, USA}
\email{dawei.chen@bc.edu}

\author{Martin M\"oller}
\address{Institut f\"ur Mathematik, Goethe-Universit\"at Frankfurt,
Robert-Mayer-Str. 6-8,
60325 Frankfurt am Main, Germany}
\email{moeller@math.uni-frankfurt.de}
\thanks{Research of M.M.\ is supported
by the DFG-project MO 1884/2-1, 
and the Collaborative Research Centre
TRR 326 ``Geometry and Arithmetic of Uniformized Structures.''}

\begin{abstract}
The even spin components of the strata of Abelian differentials are difficult to handle from a birational geometry perspective due to the fact that their spin line bundles have more sections than expected. Nevertheless, we prove that for large genus, the minimal even spin components are of general type. This result complements the previous work by the second and third authors, together with Costantini, on the Kodaira dimension of general strata and the minimal odd spin components of Abelian differentials. Our main technical tool is the computation and estimation of a series of effective divisor classes on the even spin components. 
%We show that except for finitely many genera the even spin components of
%projectivized minimal strata of Abelian differentials are of general type.
%This complements previous results that show such a result for many series of
%strata, including odd spin components, while hyperelliptic components are
%always uniruled.
%The main step is to compute a series of effective divisor classes in these
%even spin components.
% say something like: We COULD use these divisors to certify general type
%for strata with few zeros when $g$ is sufficiently large. ?!?
\end{abstract}
\maketitle
\tableofcontents

%%%%%%%%%%%%%%%%%%%%%%%%%%%%%%%%%%%%%%%%%%%%%%%%%%%%%%%%%%%%
%%%%%%%%%%%%%%%%%%%%%%%%%%%%%%%%%%%%%%%%%%%%%%%%%%%%%%%%%%
\section{Introduction}
%%%%%%%%%%%%%%%%%%%%%%%%%%%%%%%%%%%%%%%%%%%%%%%%%%%%%%%%%%

Let $\mu = (m_1,\ldots, m_n)$ be a tuple of integers where $\sum_{i=1}^n m_i = 2g-2$. The moduli space $\bP\omoduli[g,n](\mu)$ parameterizes smooth and connected complex algebraic curves $(X, z_1, \ldots, z_n)$ of genus $g$ with $n$ marked points such that $\sum_{i=1}^n m_iz_i$ is a canonical divisor of type $\mu$ in $X$. As $\mu$ varies, these spaces $\bP\omoduli[g,n](\mu)$ stratify the (projectivized) Hodge bundle of Abelian differentials according to the types of the zero and pole orders of the underlying differentials. Therefore, $\bP\omoduli[g,n](\mu)$ is also called the (projectivized) stratum of Abelian differentials of type $\mu$.

The goal of this paper is to improve the understanding of the birational geometry of strata of Abelian differentials in the series of components that have been left aside in previous work: the even spin case. It is well-known that these strata $\bP\omoduli[g,n](\mu)$
%, the projectivized moduli spaces of Abelian differentials with labeled singularities of orders prescribed by a partition~$\mu = (m_1,\ldots,m_n)$ of $2g-2$, 
can have up to three connected components in the holomorphic case $m_i \geq 0$ for all~$i$, according to the classification by Kontsevich--Zorich \cite{kozo1}. In the case when the strata are connected, they behave as expected for many moduli spaces: low-genus examples are uniruled \cite{Bar18, BudGonality}, while for large genus, many patterns of zeros, including cases with both few and many zeros, have their Kodaira dimension shown to be maximal in \cite{Kodstrata}, i.e., being of general type. Here, we complement the understanding in the case of partitions~$\mu$ where the strata are disconnected.
\par
If $\mu = (g-1,g-1)$ or $\mu = (2g-2)$, then the strata have a hyperelliptic component $\bP\omoduli[g,n](\mu)^{\hyp}$, and these components are always rational by the explicit parametrization of hyperelliptic curves. If all entries of~$\mu$ are even and $g\geq 4$, then the strata can have odd spin and even spin components whose generic elements are not hyperelliptic. 
%then the strata have odd spin and even spin components (which come on top of hyperelliptic components if $g \geq 4$). 
For odd spin and the many types alluded to above, it was also shown in \cite{Kodstrata} that these components are of general type in large genus.
\par
The most interesting remaining case is the series of even spin components of the minimal strata, i.e., $\mu = (2g-2)^{\even}$. This is because the even spin component is entirely contained in the unique generalized Weierstrass divisor (see Section~\ref{sec:genWP}) which was crucially used for all other strata dealt with in \cite{Kodstrata}. Our main result below is to overcome this difficulty, to apply the usual strategy of ``canonical is ample plus effective," and to show that even and odd spin components behave similarly, contrasting with the hyperelliptic case.
\par
\begin{theorem} \label{intro:minimal}
The even spin components of the coarse moduli spaces of Abelian differentials
with a unique zero, $\bP\Omega \mathrm{M}_{g,1}(2g-2)^{\even}$, are of general type for
$g = 31$ and $g \geq 33$.
\end{theorem}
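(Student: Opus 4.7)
The plan is to prove that the canonical class of a smooth orbifold compactification of the even spin component is big, and then that pluricanonical forms descend to a resolution of singularities of the coarse moduli space. The natural compactification to work on is the multi-scale differential space $\PLS[g,1]$ restricted to the even spin component, following the framework developed in \cite{Kodstrata}. On this space the canonical class admits an explicit expression in terms of the tautological class $\xi$, the $\psi$ class at the unique zero, the Hodge class $\lambda$, and the boundary divisors indexed by enhanced level graphs, which comes from the Euler sequence on strata referenced in the index setup.

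First I would write $K_{\PLS[g,1]^{\even}}$ as a $\bQ$-linear combination of these tautological classes. Since $\xi$ is big on the interior of the stratum, the task reduces to exhibiting $K$ as a positive rational combination of $\xi$ together with classes of effective divisors whose coefficients in the tautological basis are controlled. The central obstruction, as flagged in the introduction, is that the generalized Weierstrass divisor---the one effective divisor that played this role for all strata handled in \cite{Kodstrata}---contains the entire even spin component by Theorem~\ref{thm:bullock} and so provides nothing usable here.

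The bulk of the work is therefore the construction and class computation of a series of alternative effective divisors intrinsic to the even spin geometry, as the abstract advertises. Natural candidates include Brill--Noether type loci on the spin line bundle $L$ satisfying $L^{\otimes 2}\cong \OO_C((g-1)p)$ (for instance, loci where $h^0(L)$ jumps above its even-spin generic value by an additional unit), higher-order conditions at the zero $p$ such as the vanishing of sections of $L^{\otimes k}$ for various $k$, and pullbacks of classical Brill--Noether, Petri, or theta-null divisors from $\barmoduli[g,1]$ along the map forgetting the differential. For each candidate $D$, I would compute $[D] = a\,\xi + b\,\psi + c\,\lambda + (\text{boundary})$ via test-curve computations on strata and the tautological intersection machinery of \cite{Kodstrata}. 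The main obstacle is engineering divisors whose $\psi$-to-$\xi$ ratio is small enough to beat the canonical class; without the efficient Weierstrass divisor one must combine several divisors to cover the deficit, and each computation demands careful control of multi-scale boundary contributions arising from the many enhanced level graphs at the boundary of the minimal stratum.

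Once enough effective classes are assembled, a linear-optimization step yields a decomposition $K = \alpha\,\xi + \sum \beta_i[D_i] + (\text{effective boundary})$ with all $\alpha, \beta_i \geq 0$. The genus thresholds $g=31$ and $g\geq 33$ should emerge from the resulting coefficient inequalities; the gap at $g=32$ is presumably a marginal failure of a discrete optimization over an integer-indexed family of divisors, with $g=31$ rescued by an ad hoc construction tailored to that specific genus. Finally, to descend the bigness from the stack to the coarse moduli space $\bP\Omega\mathrm{M}_{g,1}(2g-2)^{\even}$, I would carry out a Reid--Tai style analysis at the boundary strata of $\PLS[g,1]$ and at loci with nontrivial stabilizers to confirm that pluricanonical forms extend across a resolution, completing the proof that the component is of general type.
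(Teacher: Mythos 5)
Your overall strategy --- express the canonical class of the multi-scale compactification as ample plus effective, handle the coarse-space singularities by a Reid--Tai type argument, and replace the unusable generalized Weierstrass divisor by some other effective class --- matches the paper's skeleton, and your identification of the obstruction (Bullock's theorem forces the whole even spin component into the generalized Weierstrass locus $W_{(2g-2)}(g-1)$) is exactly right. But the proposal stops precisely where the paper's actual work begins. The divisor the paper uses is the Weierstrass divisor with extra vanishing $W^+ = \{h^0(X,gz)\ge 3\}$, which is in the spirit of your ``jump by an additional unit'' candidate; the problem is that you give no mechanism for computing its class, and in particular its boundary coefficients, which is where essentially all of the difficulty lies. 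The paper's key idea, absent from your proposal, is to obtain $[W^+]$ indirectly: one clutches the even spin minimal stratum in genus $g$ onto a fixed $n$-pointed elliptic curve via a map $\zeta_E\colon \bP\LMS[2g-2][g,1]^{\even}\to\bP\LMS[\mu][g+1,n]$ landing in a compact-type boundary divisor $D_{\Gamma_1}$, and pulls back a twisted generalized Weierstrass divisor from genus $g+1$. For this pullback to remain effective one must know the \emph{exact} order of vanishing of the defining degeneracy morphism $\cH/\cO(-1)\to\cF_\alpha$ along $D_{\Gamma_1}$ (Proposition~\ref{prop: exact multiplicity}), and that computation requires limit linear series with multi-vanishing orders along $k$-saturated chains of divisors (Proposition~\ref{prop:even-general-vanishing}), generalizing Cukierman's analysis. ``Test-curve computations'' will not substitute for this: the issue is not determining unknown coefficients of a known class, but certifying that an a priori effective class minus a large boundary correction along $D_{\Gamma_1}$ is still effective and not supported on the image of $\zeta_E$.

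Two smaller points. First, your guess about the genus thresholds is off: there is no ad hoc construction for $g=31$. The parity dependence comes from the second effective divisor in the convex combination --- the pullback Brill--Noether class $[\BN_{(2g-2)}]$ (a $\frakg^1_{(g+1)/2}$ condition) exists only for odd $g$, while for even $g$ one must use the slightly weaker Hurwitz class $[\Hur_\mu]$; the numerical optimization then succeeds for odd $g\ge 31$ and even $g\ge 34$, which is exactly the stated range. Second, the singularity analysis is not redone from scratch: the paper imports the non-canonical compensation divisor $D_{\NC}$ from \cite{Kodstrata} via Proposition~\ref{prop:GenTypeCrit} and works with $K_{\bP\MScoarse}-D_{\NC}$ throughout, so the Reid--Tai step enters only through the coefficients $b_{\NC}^\Gamma$ in the final positivity inequalities.
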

%\par
%The genus bound in the conclusion above is not the optimal result, but rather chosen to provide a concise proof in the case of large genus (see Section~\ref{sec:gentype}).  
%Using computer-assisted checks for all coarse types of graphs, as in the complementary computer files to \cite{Kodstrata}, one could likely improve these bounds, presumably (by checking only some critical cases) to $g \geq 19$. We are aware of several minor improvements: (i) The divisor class $[\NF_{(2g-2)}]$ (see \cite[Lemma~6.9]{Kodstrata}) is marginally better in low even genus than the class $[\Hur_\mu]$, which we use exclusively here. (ii) All the divisor classes we use may have contributions entirely contained in the boundary, though we apply twisted versions of the generalized Weierstrass divisor to exclude this for some boundary strata. We are also aware that the pullback of the Brill--Noether divisor contains, for example, some ``banana" boundary divisors, which we could remove.
%\par
%However, none of the improvements known to us will bring us close to $g=12$, where the currently known strategies for proving uniruledness end. Finding the precise tipping point of the Kodaira dimension likely requires better effective divisors or improved control of the boundary contribution.
\par
\medskip
\paragraph{\textbf{Methods of the proof and possible improvements}} Our strategy for proving Theorem~\ref{intro:minimal} is to compute the class of the generalized Weierstrass divisor on the multi-scale compactification of the stratum $ \bP\LMS[\mu][g+1,n]$ and pull it back via the clutching map
\bes
{\zeta_E}\colon \bP\LMS[2g-2][g,1]^{\even}  \rightarrow \bP\LMS[\mu][g+1,n]\,,
\ees
which is defined in Section \ref{sec:CTclutching}. The intuition for this method comes from the case of moduli spaces of curves, where pulling back the class of the Weierstrass divisor via the map 
$\pi_E \colon \ol{\mathcal{M}}_{g,1} \rightarrow \ol{\mathcal{M}}_{g+1,1}$, yields the class of the Weierstrass divisor in genus one lower. The expectation that a similar phenomenon occurs for generalized Weierstrass divisors is confirmed in Theorem \ref{theorem:class_twisted_Weierstrass}. 
\par
The technical challenge in pulling back generalized Weierstrass divisors via the map~$\zeta_E$ is to ensure that the effective divisor in $\bP\LMS[\mu][g+1,n]$ that we pull back is not supported on the boundary divisor $D_{\Gamma_1}$, which contains $\textrm{Im}(\zeta_E)$.  The generalized Weierstrass divisor is defined as the degeneracy locus of a morphism of vector bundles of rank $g-1$: 
\[\mathcal{H}/\OO(-1) \rightarrow \mathcal{F}_{\alpha}\,.\] 
To compute the vanishing order of this morphism along $D_{\Gamma_1}$, we use a generalization of the theory of limit linear series and the methods outlined in \cite{Cukierman} and \cite[Section~2]{limitlinearbasic}, namely, linear series with multi-vanishing orders (at a chain of divisors) as discussed in \cite{Ossermancompacttype}. The vanishing order computation is presented in Proposition~\ref{prop: exact multiplicity}, which relies crucially on the multi-vanishing order computations in Proposition~\ref{prop:even-general-vanishing}.
\par \par
The genus bound in the conclusion above is not the optimal result, but rather chosen to provide a concise proof in the case of large genus (see Section~\ref{sec:gentype}).  
Using computer-assisted checks for all coarse types of graphs, as in the complementary computer files to \cite{Kodstrata}, one could likely improve these bounds, presumably (by checking only some critical cases) to $g \geq 19$. We are aware of several minor improvements: (i) The divisor class $[\NF_{(2g-2)}]$ (see \cite[Lemma~6.9]{Kodstrata}) is marginally better in low even genus than the class $[\Hur_\mu]$, which we use exclusively here. (ii) All the divisor classes we use may have contributions entirely contained in the boundary, though we apply twisted versions of the generalized Weierstrass divisor to exclude them for some boundary strata. We are also aware that the pullback of the Brill--Noether divisor contains, for example, some ``banana" boundary divisors, which we could remove.
\par
However, none of the improvements known to us will bring us close to $g=12$, where the currently known strategies for proving uniruledness end. Finding the precise tipping point of the Kodaira dimension likely requires better effective divisors or improved control of the boundary contribution.
\medskip
\paragraph{\textbf{Even spin components of non-minimal strata}} The case of even spin and non-minimal strata can be addressed in the same way as in \cite{Kodstrata}, though this may introduce some rounding error when using the generalized Weierstrass divisor. We elaborate on this comment in Section~\ref{subsec:Weierstrass-nonminimal}. We show that the obvious ``generalized Weierstrass'' type condition indeed yields a divisor in Proposition~\ref{prop:properdivisor}, but we do not pursue the remaining steps, including the class computation or the numerical optimization. (The divisor class will be similar to that in \cite[Section~7]{Kodstrata}, although the twisting to minimize boundary contributions will need minor adjustments.)
\par
An interesting side remark is that generalized Weierstrass divisors in the even spin strata are not irreducible, not even in the interior $\bP\omoduli[g,n](\mu)^\even$ of the stratum (see Proposition~\ref{prop:genWisreducible}).

%%%%%%%%%%%%%%%%%%%%%%%%%%%%%%%%%%%%%%%%%%%%%%%%%%%%%%%%%%%%

%%%%%%%%%%%%%%%%%%%%%%%%%%%%%%%%%%%%%%%%%%%%%%%%%%%%%%%%%%%%
%%%%%%%%%%%%%%%%%%%%%%%%%%%%%%%%%%%%%%%%%%%%%%%%%%%%%%%%%%
\section{The moduli space of multi-scale differentials}
\label{sec:MS}
%%%%%%%%%%%%%%%%%%%%%%%%%%%%%%%%%%%%%%%%%%%%%%%%%%%%%%%%%%

In this section, we recall basic facts about the compactification $\bP\LMS$ of
strata by multi-scale differentials, its singularities, its canonical class, and the classes of some pullback divisors. We assume familiarity with the
terminology of this compactification (see \cite{LMS} or as summarized
in \cite{CMZEuler,Kodstrata}). We denote by $\varphi\colon \bP\LMS \to \bP\MScoarse$
the map from the smooth Deligne--Mumford stack to its underlying coarse moduli space.
\par
\medskip
\paragraph{\textbf{Boundary divisors}}
Boundary strata of the compactification $\bP\LMS$ are indexed by level graphs (see \cite{LMS, CMZEuler}). For a level graph~$\Gamma$, we denote by $D_\Gamma$ the closure of the boundary stratum of multi-scale differentials with level graph~$\Gamma$. In general, the boundary strata~$D_\Gamma$ are not connected. Level graphs here are always enhanced by an integer $p_e \geq 0$ (the number of prongs) for each edge~$e$. The divisorial boundary strata (for $\mu$ of holomorphic type) consist of the horizontal boundary divisor~$D_h$, whose level graph has one vertex and one horizontal edge, and $D_\Gamma$, where $\Gamma$ is a two-level graph with no horizontal edges. We denote the set of two-level graphs without horizontal edges by $\LG_1$, where the index means ``one level below zero.'' For $\Gamma \in \LG_1$, we let $\ell_\Gamma = \lcm(p_e: e \in E(\Gamma))$.
\par
Each of the levels of the multi-scale differentials parametrized by~$D_\Gamma$
is commensurable (see \cite[Section~4.2]{CMZEuler} for details) with a
product of strata, possibly for disconnected curves and possibly constrained
by residue conditions. If~$\Gamma$ has two levels, we denote by~$N_\Gamma^\top$
and~$N^\bot_\Gamma$, respectively, the dimension of the unprojectivized top and bottom levels. They are related by the equation 
\be
N_\Gamma^\top + N_\Gamma^\bot \= N \coloneqq \dim\bP\LMS + 1
\ee
to the unprojectivized strata dimension. 
\par
\medskip
\paragraph{\textbf{Singularities}} The moduli space of multi-scale differentials $\bP\LMS$ is a smooth Deligne--Mumford stack (see~\cite[Theorem 1.3]{LMS}).  The underlying coarse moduli space $\bP\MScoarse$
has canonical singularities in its interior, as shown in \cite[Theorem~1.2]{Kodstrata}, 
but non-canonical singularities can appear at its boundary. Nevertheless, using a non-canonical compensation
divisor, the usual strategy to certify general type still works.
More precisely, consider the following types of graphs.
If an edge corresponds to a separating node, we say that it is of {\em compact
type (CPT)}. Otherwise, it is of {\em non-compact type (NCT)}.
If the lower part of the graph, separated by a CPT edge, consists of a single
rational vertex, the edge type is called a {\em rational bottom tail (RBT)}. 
A {\em (vertical) dumbbell (VDB)} graph is defined as a compact-type graph with a unique (separating) vertical edge. If the graph contains a unique vertical edge (i.e., a VDB graph), and if one end of the edge is 
genus one, we call it an {\em elliptic dumbbell  (EDB)}. An edge of
compact type that is neither RBT nor EDB is called {\em other compact
type (OCT)} (see \cite[Section~5]{Kodstrata}). Let
\ba \label{eq:non-canonical-term}
D_{\mathrm{NC}} & \,\coloneqq \,\sum_{\Gamma \in \LG_1} b_{\NC}^\Gamma [D_\Gamma] \,\coloneqq\, \sum_{\Gamma \in \LG_1}
(\ell_\Gamma R_{\NC}^\Gamma -1) [D_\Gamma]\, \quad \text{where}  \\
R_{\NC}^\Gamma &=\, \sum_{\rm NCT} \frac{1}{2}\frac{1}{p_e} + 
\sum_{\rm RBT} \frac{1}{p_{e}}  + \sum_{\rm OCT}
\frac{2}{ p_{e}} +\sum_{\rm EDB} \frac{4}{p_{e}} \,.
\ea
In the above, each sum runs over the edges of the corresponding type. The edge
type EDB is exclusive; that is, if it appears in a two-level graph, then the graph has a unique edge, and all other edge types do not appear.
\par
We can now state the criterion for the coarse moduli spaces of holomorphic differentials to be of general type.
\par
\begin{prop}[{\cite[Proposition~1.3]{Kodstrata}}] \label{prop:GenTypeCrit}
The effective divisor class $D_{\NC}$ has the property that pluri-canonical forms
associated with the perturbed canonical class  $K_{\bP\MScoarse} - D_{\NC}$ in the
smooth locus of $\bP\MScoarse$ extend to a desingularization.  
\par
In particular, if one can write
\be \label{eq:KDAE}
K_{\bP\MScoarse} - D_{\NC} \= A + E
\ee
with~$A$ being an ample divisor class and $E$ an effective divisor class,
then~$\bP\MScoarse$ is a variety of general type.
\end{prop}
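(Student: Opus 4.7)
The plan is to translate the extension property into an explicit discrepancy computation for the coarse moduli map $\varphi\colon \bP\LMS \to \bP\MScoarse$ and then derive general type from the ``ample plus effective'' decomposition. Pluri-canonical forms on the smooth locus of $\bP\MScoarse$ extend to a desingularization $\pi\colon \widetilde{X} \to \bP\MScoarse$ if and only if, for every exceptional prime divisor $E_i$ with discrepancy $a_i$, one has $a_i \geq 0$; more generally, forms twisted by an effective divisor $D$ extend if and only if $a_i + m_i \geq 0$, where $m_i$ denotes the multiplicity of $\pi^* D$ along $E_i$. By \cite[Theorem~1.2]{Kodstrata} the interior of $\bP\MScoarse$ has only canonical singularities, so one automatically has $a_i \geq 0$ on exceptional divisors mapping into the interior, and it remains to compensate the potentially negative discrepancies arising along preimages of the boundary divisors $D_\Gamma$, $\Gamma \in \LG_1$.

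To compute these, I would use that $\bP\LMS$ is a smooth Deligne--Mumford stack, so the generic stabilizer along $D_\Gamma$ is the cyclic group of order $\ell_\Gamma$ generated by the simultaneous level rotation, acting on the local plumbing coordinates with weights determined by the prong numbers $p_e$. Applying the Reid--Tai age criterion to this cyclic quotient and resolving it torically, one reads off the most negative discrepancy along any exceptional divisor sitting over the generic point of $D_\Gamma$. The content of the computation is that each edge $e$ of $\Gamma$ contributes to this deficit exactly $\ell_\Gamma/(2 p_e)$ if it is NCT, $\ell_\Gamma/p_e$ if it is RBT, $2\ell_\Gamma/p_e$ if it is OCT, and $4\ell_\Gamma/p_e$ if it is EDB, the larger coefficients in the OCT and EDB cases reflecting the extra involutions (hyperelliptic on a genus-one tail in the EDB case). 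Summing these contributions over the edges of $\Gamma$ and subtracting the $-1$ stacky correction coming from the ramification of $\varphi$ along $D_\Gamma$ yields exactly $b_{\NC}^\Gamma = \ell_\Gamma R_{\NC}^\Gamma - 1$. With this choice, $a_i + \mathrm{mult}_{E_i}(\pi^* D_{\NC}) \geq 0$ for every boundary-exceptional $E_i$, so pluri-canonical forms of $K_{\bP\MScoarse} - D_{\NC}$ lift to $\widetilde{X}$.

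For the ``in particular'' statement, assume $K_{\bP\MScoarse} - D_{\NC} = A + E$ with $A$ ample and $E$ effective. Asymptotic Riemann--Roch gives $h^0(mA) \sim c \, m^{\dim \bP\MScoarse}$ for $m \gg 0$, and multiplication by a fixed section of $mE$ injects $H^0(mA)$ into $H^0\bigl(m(K_{\bP\MScoarse} - D_{\NC})\bigr)$. Sections of the latter extend to pluri-canonical forms on $\widetilde{X}$ by the first part of the proposition, so $h^0(\widetilde{X}, m K_{\widetilde{X}})$ grows maximally in $m$, and $\bP\MScoarse$ is of general type.

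The main obstacle in this program is the case-by-case age computation behind $R_{\NC}^\Gamma$: one must identify, for each edge type, precisely on which tangent directions the level rotation acts non-trivially and with which characters, account for the different local product decompositions when CPT and NCT edges coexist, and verify that the stated deficits are realized by a single element of the generic stabilizer rather than merely bounded term by term. The EDB case is the most delicate, since there the level rotation must be composed with the elliptic involution on the bottom component before computing the age, which is what produces the coefficient $4/p_e$ in place of the naive $1/p_e$ or $2/p_e$.
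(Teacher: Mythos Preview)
The paper does not actually prove this proposition: it is quoted verbatim as \cite[Proposition~1.3]{Kodstrata} and used as a black box, so there is no ``paper's own proof'' to compare against here. Your sketch is a reasonable outline of the argument in the cited reference --- interior canonical singularities from \cite[Theorem~1.2]{Kodstrata}, Reid--Tai along the boundary with the level-rotation group of order~$\ell_\Gamma$, then the standard ample-plus-effective conclusion --- and the overall shape (compensating the age deficit edge by edge, the $-1$ coming from the ramification of~$\varphi$ along~$D_\Gamma$) is correct.

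Two points where your heuristics would need tightening if you actually carried this out. First, the coefficients $1/(2p_e)$, $1/p_e$, $2/p_e$, $4/p_e$ are not literally the age contributions of the level rotation on individual coordinates; they are the outcome of a more involved analysis in \cite[Section~5]{Kodstrata} that tracks, for each edge type, which combinations of automorphisms of the level strata can pair with a power of the level rotation to make the total age drop below~$1$. In particular, the OCT and EDB coefficients do not arise simply from ``an extra involution acting on more tangent directions'' but from specific automorphisms of the bottom-level curve that must be classified. Second, your statement that ``the generic stabilizer along $D_\Gamma$ is cyclic of order~$\ell_\Gamma$'' conflates the ghost group (which governs the orbifold structure transverse to~$D_\Gamma$) with the full automorphism group of a generic point of~$D_\Gamma$; the non-canonical contributions come precisely from the interaction of these two, and the bound $R_{\NC}^\Gamma$ is designed to dominate the worst case over all such combinations rather than to equal the deficit of a single cyclic element.
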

\par
\medskip
\paragraph{\textbf{The canonical class}}
We define the rational number 
\be \label{eq:defkappamu}
\kappa_\mu \= \sum_{ m_i\neq -1} \frac{m_i(m_i+2)}{m_i+1} 
\= 2g - 2 + s + \sum_{m_i\neq -1} \frac{m_i}{m_i+1}\, 
\ee
for any signature $\mu = (m_1, \ldots, m_n)$, where $s$ is the number of
entries equal to~$-1$.
The values  $\kappa_{\mu_{\Gamma}^{\bot}}$ and $\kappa_{\mu_{\Gamma}^{\top}}$ 
%$\kappa^\bot \coloneqq \kappa_{\mu_{\Gamma}^{\bot}}$ and $\kappa^\top \coloneqq \kappa_{\mu_{\Gamma}^{\top}}$ 
are similarly defined for the bottom and top level strata of $\Gamma$, \emph{including the edges
  as legs}. Following the convention in \cite{Kodstrata}, we sometimes simply write $\kappa^\bot$ and $\kappa^\top$ when there is no confusion in the context.  We also denote by $\lambda_1$ the first Chern class of the Hodge bundle pulled back to the moduli space of multi-scale differentials. 
% LATER MORE USEFUL
%In particular
%\bes
%\kappa_{(2g-2)} \= 4g(g-1)/(2g-1)\,.
%\ees
%\par
\begin{prop}[{\cite[Proposition~6.2]{Kodstrata}}]
\label{prop:canonicaloncoarse}
The divisor class of the canonical bundle of the coarse moduli space
$\bP\MScoarse[2g-2]^{\even}$  is given by 
\ba \label{eq:canformula}
\frac{\kappa_{(2g-2)}}{2g} &\c_1\bigl(K_{\bP\MScoarse[2g-2]^{\even}}\bigr) 
\=  12 \lambda_1 - \Bigl(1 + \frac{\kappa_{(2g-2)}}{2g}\Bigr)[D_h] \\
& - \sum_{\Gamma\in \LG_1} \Big(\ell_\Gamma  \kappa_{\mu_{\Gamma}^{\bot}}
- \frac{\kappa_{(2g-2)}} {2g} (\ell_\Gamma N_\Gamma^{\bot}-1)
 \Big) [D_\Gamma]
- \frac{\kappa_{(2g-2)}}{2g} \sum_{\Gamma \in \mathrm{HBB}}
 [D^{\rm H}_\Gamma]
\ea
in $\CH^1(\bP\MScoarse[2g-2]^{\even}) = \CH^1(\bP\LMS[2g-2][g,1]^{\even})$.
\end{prop}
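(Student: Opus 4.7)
The plan is to derive the formula by working on the smooth Deligne--Mumford stack $\bP\LMS[2g-2][g,1]^{\even}$ and then transferring to the coarse moduli space via the ramification of $\varphi$. The formula is stated on the coarse space because the general-type criterion of Proposition~\ref{prop:GenTypeCrit} is applied there, but the natural computational setting is the smooth stack, where the Euler sequence and Mumford-type relations for the Hodge bundle are available.

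For the stack computation, I would apply the logarithmic Euler sequence on strata (the subject of subsequent sections of the paper, as reflected in the running head) to express $\Omega^1_{\bP\LMS}(\log \partial)$ as an extension involving the tautological line bundle $\OO(-1)$. Taking determinants produces a formula for $c_1(K_{\bP\LMS})$ in terms of the tautological class $\xi = c_1(\OO(-1))$ and the boundary divisors, with coefficients depending on the bottom-level dimension $N_\Gamma^\bot$. I would then invoke the Mumford-type relation for the Hodge bundle on strata, of the schematic form
\[
\kappa_{(2g-2)}\, \xi \;=\; 12\, \lambda_1 \,-\, [D_h] \,-\, \sum_{\Gamma \in \LG_1} \ell_\Gamma\, \kappa_{\mu_{\Gamma}^{\bot}}\,[D_\Gamma] \,-\, (\text{HBB terms}),
\]
which converts the $\xi$-contribution into the $\lambda_1$-combination appearing in the statement. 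The prefactor $\kappa_{(2g-2)}/2g$ emerges naturally from this conversion together with the identity $N = 2g$ for the minimal stratum.

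For the passage to the coarse moduli space, I would use that $\varphi$ is an isomorphism over the generic point of the interior but is ramified of order $\ell_\Gamma$ along $D_\Gamma$ due to the cyclic stabilizer acting on prong matchings. Hence,
\[
K_{\bP\LMS[2g-2][g,1]^{\even}} \;=\; \varphi^*\, K_{\bP\MScoarse[2g-2]^{\even}} \,+\, \sum_{\Gamma \in \LG_1} (\ell_\Gamma - 1)\,[D_\Gamma]
\]
up to analogous contributions from the horizontal boundary and the HBB divisors. Solving for $\varphi^* K_{\bP\MScoarse[2g-2]^{\even}}$ and combining with the stack-side expression yields, after collecting the coefficient of each $[D_\Gamma]$, precisely $\ell_\Gamma \kappa_{\mu_\Gamma^{\bot}} - (\kappa_{(2g-2)}/2g)(\ell_\Gamma N_\Gamma^{\bot} - 1)$, matching the statement.

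The main obstacle is the precise bookkeeping of the coefficients, in particular the non-obvious factor $1 + \kappa_{(2g-2)}/2g$ in front of $[D_h]$: the horizontal boundary contributes both to the logarithmic cotangent bundle (as a log pole of the Euler sequence) and to the $\xi$-to-$\lambda_1$ relation, and the two contributions must be combined carefully. A secondary subtlety is that the claim is made on the even spin component specifically; since the Euler sequence, the Mumford-type relation, and the ramification of $\varphi$ are all local along the boundary and insensitive to the global choice of connected component, the formula restricts without modification, but one should verify that the even spin locus meets each relevant boundary divisor transversely so that no additional correction arises.
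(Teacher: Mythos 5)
The paper does not prove this proposition itself: it is quoted directly from \cite[Proposition~6.2]{Kodstrata}, so there is no in-text argument to compare against. Your proposed route --- the logarithmic Euler sequence on the stack to express $c_1(K_{\bP\LMS})$ in terms of $\xi$ and the boundary, the conversion of $\xi$ into the $\lambda_1$-combination via the relation~\eqref{eq:eta-xi} with prefactor $\kappa_{(2g-2)}/N$ and $N=2g$, and the descent to the coarse space by Riemann--Hurwitz for $\varphi$ with ramification $\ell_\Gamma - 1$ along $D_\Gamma$ plus the extra generic automorphisms over the HBB loci --- is essentially the argument given in that reference, so the proposal is correct and follows the same approach.
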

\par
We remark that the last sum is over ``hyperelliptic banana backbone" (HBB)
graphs, which are level graphs with a single vertex on the lower level
connected to each vertex (arbitrary in number) on the top level either by
a single edge or by a pair (``banana'') of edges with the same enhancement. 
Additionally, each vertex must belong to a hyperelliptic component of the respective stratum, the graph must contain at least one banana, and the prong-matchings are chosen to have even spin but non-hyperelliptic structure (see \cite[Section~2.3]{Kodstrata}).  
\par
\medskip
\paragraph{\textbf{Pullback divisors}}
We recall from \cite[Section~6.2]{Kodstrata} that two divisor classes 
initially used by Harris and Mumford pull back to divisor classes on strata.
We define the pullback \emph{Brill--Noether divisor class $[\BN_\mu]$} (as a $\bQ$-divisor) to be the total
transform  $[\BN_\mu] = f^*[\BN_g]$, where $f\colon \bP\LMS \to\barmoduli[g]$ is
the forgetful map and~$[\BN_g]$ is a rational multiple of the class (of the closure) 
\ba \label{eq:defBN} 
\wt{\BN}_g & \= \{ X \in \moduli[g]\,: X \, \text{has a} \,\,\frakg^1_k\ \text{where}\ k=(g+1)/2\}\,. 
%\wt{\BN}_g & \= \{ X \in \moduli[g]\,: X \, \text{has a} \,\,\frakg^1_k\}\,,\qquad (k=(g+1)/2)\,. 
\ea
We refer to~\cite{HarrisMumford} for the original computation of the class of this Brill--Noether divisor in $\overline{\mathcal M}_g$. 
\par 
For an edge~$e$ in a level graph~$\Gamma$, we write $e \mapsto  \Delta_i$ if
contracting all edges of~$\Gamma$ except for~$e$ results in a graph of compact type 
parametrized by the boundary divisor~$\Delta_i$ in $\barmoduli[g]$. We
write $e \mapsto \Delta_{\irr}$ if the edge is non-separating.
\par
\begin{lemma}[{\cite[Lemma~6.7]{Kodstrata}}]
\label{lem:BNclass} Let $g\geq 3$ be odd. 
The Brill--Noether divisor class in the stratum $\bP\LMS[2g-2][g,1]^{\even}$ is an effective divisor class, which can be expressed as 
$$ [\BN_{(2g-2)}] \= 6\lambda_1 - \frac{g+1}{g+3} [D_h] - \sum_{\Gamma\in \LG_1}
b_\Gamma [D_\Gamma]\,, $$
where 
$$b_\Gamma \= \ell_\Gamma \Big(\sum_{i=1}^{[g/2]}\sum_{e\in E(\Gamma) \atop e \mapsto \Delta_i} \frac{6i(g-i)}{(g+3) p_e} +  
\sum_{e\in E(\Gamma) \atop e \mapsto \Delta_{\irr}} \frac{g+1}{(g+3) p_e}  \Big)\,. $$
\end{lemma}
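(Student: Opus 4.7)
The plan is to compute $f^*[\BN_g]$ directly. For odd $g$, the Harris--Mumford formula on $\barmoduli[g]$ expresses the class of the closure of the Brill--Noether divisor as a positive rational multiple of
$(g+3)\lambda - \tfrac{g+1}{6}\delta_{\irr} - \sum_{i=1}^{[g/2]} i(g-i)\,\delta_i$. Rescaling so that the coefficient of $\lambda$ becomes $6$ gives
\[
[\BN_g] \= 6\lambda - \tfrac{g+1}{g+3}\,\delta_{\irr} - \sum_{i=1}^{[g/2]} \tfrac{6i(g-i)}{g+3}\,\delta_i
\]
on $\barmoduli[g]$.

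The bulk of the argument is then the pullback to $\bP\LMS[2g-2][g,1]^{\even}$. The identity $f^*\lambda = \lambda_1$ holds by construction, so the essential input is the boundary pullback formula
\begin{align*}
f^*\delta_{\irr} &\= [D_h] + \sum_{\Gamma\in\LG_1}\sum_{e\mapsto \Delta_{\irr}} \tfrac{\ell_\Gamma}{p_e}\,[D_\Gamma]\,,\\
f^*\delta_i &\= \sum_{\Gamma\in\LG_1}\sum_{e\mapsto \Delta_i} \tfrac{\ell_\Gamma}{p_e}\,[D_\Gamma]\,.
\end{align*}
The summand $[D_h]$ in $f^*\delta_{\irr}$, with multiplicity one, reflects the fact that a horizontal node smooths transversally in $\bP\LMS$. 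The multiplicity $\ell_\Gamma/p_e$ attached to each vertical edge records the local ramification of $f$: in the standard local model of $\bP\LMS$ near a generic point of $D_\Gamma$, the smoothing parameter in $\barmoduli[g]$ of the node corresponding to an edge $e$ is, up to a unit, the $(\ell_\Gamma/p_e)$-th power of the single coordinate transverse to $D_\Gamma$, since that transverse coordinate is an $\ell_\Gamma$-th root of a level parameter while each per-edge smoothing involves only the prong number $p_e$.

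Substituting these formulas into $f^*[\BN_g]$ and collecting contributions edge by edge on each $\Gamma$, the coefficient of $[D_\Gamma]$ becomes
\[
\ell_\Gamma \Bigl(\sum_{i=1}^{[g/2]}\sum_{e\mapsto \Delta_i}\tfrac{6i(g-i)}{(g+3) p_e} + \sum_{e\mapsto \Delta_{\irr}}\tfrac{g+1}{(g+3) p_e}\Bigr)\,,
\]
matching the claimed expression for $b_\Gamma$. The main technical step I expect to absorb essentially all the work is the justification of the ramification multiplicity $\ell_\Gamma/p_e$, which requires invoking the explicit local structure of the multi-scale compactification at a two-level boundary point; once that input is in hand, the remainder is routine bookkeeping of rational coefficients, and effectivity of $[\BN_{(2g-2)}]$ follows from effectivity of $[\BN_g]$ provided the image of $f$ is not contained in $\wt{\BN}_g$, which is checked by exhibiting a single curve with a subcanonical point of even spin carrying no $\frakg^1_{(g+1)/2}$.
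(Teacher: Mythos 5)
Your proposal is correct and follows the intended route: the paper does not prove this lemma at all but imports it verbatim from \cite[Lemma~6.7]{Kodstrata}, where the argument is exactly what you describe --- normalize the Harris--Mumford class of $\wt{\BN}_g$ so that the $\lambda$-coefficient is $6$, apply $f^*\lambda=\lambda_1$ together with the boundary pullback formulas $f^*\delta_{\irr}=[D_h]+\sum_\Gamma\sum_{e\mapsto\Delta_{\irr}}(\ell_\Gamma/p_e)[D_\Gamma]$ and $f^*\delta_i=\sum_\Gamma\sum_{e\mapsto\Delta_i}(\ell_\Gamma/p_e)[D_\Gamma]$ coming from the local model of $\bP\LMS$ at a two-level boundary point, and check non-containment of the image of $f$ in $\wt{\BN}_g$ for effectivity. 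You correctly isolate the two points that carry the content (the ramification multiplicity $\ell_\Gamma/p_e$ and the non-containment check), so there is nothing to object to.
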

\par
Similarly, for even genus, we let $[\Hur_\mu] = f^*[\Hur_g]$ be the
pullback {\em Hurwitz divisor class}, where~$[\Hur_g]$ is proportional to the class
(of the closure) 
\ba \label{eq:defHur}
\wt{\Hur}_g &\= \{ X \in \moduli[g]\,: \text{there exists a cover}\,\,
\pi\colon X \to \bP^1, \, \deg(\pi) = (g+2)/2, \\
&  \qquad \qquad \qquad \qquad
\text{$\pi$ has a ramification point of multiplicity three}\,\}\,.
\ea
We refer to~\cite{HarrisKodII} for the original computation of the class of this Hurwitz divisor in $\overline{\mathcal M}_g$. 
\begin{lemma}[{\cite[Lemma~6.8]{Kodstrata}}] \label{lem:Hurclass}
Let $g \geq 6$ be even. The Hurwitz divisor class $[\Hur_\mu]$ in the stratum $\bP\LMS[2g-2][g,1]^\even$ is an effective divisor class, which can be expressed as   
$$ [\Hur_\mu] \= 6\lambda_1 -  \frac{3g^2 + 12g -6}{(g+8)(3g-1)} [D_h] - \sum_{\Gamma\in \LG_1}
h_\Gamma [D_\Gamma]\,, $$
where 
$$h_\Gamma \= \ell_\Gamma \Big(\sum_{i=1}^{g/2}\sum_{e\in E(\Gamma) \atop e \mapsto \Delta_i} \frac{6 i (g-i)(3g+4)}{(g+8)(3g-1) p_e} +  
\sum_{e\in E(\Gamma) \atop e \mapsto \Delta_{\irr}} \frac{3g^2 + 12g -6}{(g+8)(3g-1) p_e} \Big)\,. $$ 
\end{lemma}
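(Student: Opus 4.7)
The plan is to derive $[\Hur_\mu]$ as the pullback $f^*[\Hur_g]$ along the forgetful map $f\colon \bP\LMS[2g-2][g,1]^{\even} \to \barmoduli[g]$, mirroring the strategy used in the preceding Brill--Noether lemma. Three ingredients are required: (i) an explicit formula for the class of the Hurwitz divisor on $\barmoduli[g]$ in even genus, (ii) the tautological identification $f^*\lambda = \lambda_1$, and (iii) the prong-weighted pullback of Deligne--Mumford boundary divisors along~$f$.

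For (i), I would invoke the classical computation of $[\wt{\Hur}_g]$ on $\barmoduli[g]$, carried out via the Harris--Mumford test-curve technique and refined by Eisenbud--Harris and Farkas. After rescaling so that the coefficient of $\lambda$ equals~$6$, the class takes the shape
\[
[\Hur_g] \= 6\lambda - \frac{3g^2+12g-6}{(g+8)(3g-1)}\,\delta_{\irr} - \sum_{i=1}^{g/2}\frac{6i(g-i)(3g+4)}{(g+8)(3g-1)}\,\delta_i\,.
\]
Effectivity is preserved under the total-transform pullback as a $\bQ$-divisor class, because $\wt{\Hur}_g$ is the closure of an irreducible codimension-one locus in $\moduli[g]$.

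For (ii) and (iii), the Hodge bundle pulls back tautologically, and for each boundary divisor $D_\Gamma$ of $\bP\LMS$ one needs the multiplicity of $f^*\delta_\bullet$ along $D_\Gamma$. The class $\delta_{\irr}$ pulls back to $[D_h]$ plus $\sum_{\Gamma \in \LG_1}\sum_{e \mapsto \Delta_{\irr}} (\ell_\Gamma/p_e)\,[D_\Gamma]$; the factor $\ell_\Gamma/p_e$ encodes the fact that $\bP\LMS$ is a prong-matching cover of the normal-crossing Deligne--Mumford boundary, ramified to order~$p_e$ along $D_\Gamma$ for each edge~$e$. The analogous formula gives $f^*\delta_i$ as a sum over edges $e \mapsto \Delta_i$, weighted by $\ell_\Gamma/p_e$. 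Plugging into the formula for $[\Hur_g]$ and grouping contributions graph-by-graph yields the stated expression, with $h_\Gamma$ aggregating all edge terms coming from the two sums.

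The hard part will be step (iii): justifying the prong-weighted pullback formula requires a local analysis of~$f$ in the explicit coordinate charts of $\bP\LMS$ near each boundary stratum $D_\Gamma$. One must also verify that no non-divisorial boundary strata contribute extra terms, which follows from a dimension count against the codimension-one intersection. Once these local models are in hand, the proof reduces to bookkeeping: substitute the Harris--Mumford formula for $[\Hur_g]$ and collect the coefficients edge by edge.
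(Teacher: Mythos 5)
Your proposal matches the paper's approach: the lemma is quoted from \cite[Lemma~6.8]{Kodstrata}, where $[\Hur_\mu]$ is by definition the total transform $f^*[\Hur_g]$ of the Harris--Mumford Hurwitz class under the forgetful map, and the stated coefficients arise exactly as you describe, from $f^*\lambda=\lambda_1$ together with the prong-weighted pullback $f^*\delta_\bullet$ contributing $\ell_\Gamma/p_e$ along $D_\Gamma$ for each edge $e\mapsto\Delta_\bullet$ (and $[D_h]$ with weight one for the horizontal node). The only ingredient you flag as ``hard''---the local model of $f$ near $D_\Gamma$---is already established in the literature on the multi-scale compactification, so your argument is essentially the paper's.
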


%%%%%%%%%%%%%%%%%%%%%%%%%%%%%%%%%%%%%%%%%%%%%%%%%%%%%%%%%%%%

%%%%%%%%%%%%%%%%%%%%%%%%%%%%%%%%%%%%%%%%%%%%%%%%%%%%%%%%%%%%
%%%%%%%%%%%%%%%%%%%%%%%%%%%%%%%%%%%%%%%%%%%%%%%%%%%%%%%%%%
\section{Compact type clutching for the minimal strata}
\label{sec:CTclutching}
%%%%%%%%%%%%%%%%%%%%%%%%%%%%%%%%%%%%%%%%%%%%%%%%%%%%%%%%%%

For general level graphs~$\Gamma$, clutching maps have been constructed
in \cite[Section~4.2]{CMZEuler}. These maps are somewhat involved because the
boundary strata are only commensurable with a product. Here, we consider
a special case used to construct effective divisor classes in
Section~\ref{sec:genWP}, where the clutching map simplifies significantly.
Our goal is to provide formulas for the pullback of divisor classes along this
clutching map.
\par
Let $\mu = (m_1,\ldots, m_n)$ be a positive partition of~$2g$. 
Consider the boundary divisor $D_{\Gamma_1}$ corresponding to the level
graph~$\Gamma_1$, 
\begin{center}
	\begin{tikzpicture}[auto, node distance=2cm, every loop/.style={},
		thick,main node/.style={circle,draw,font=\sffamily\Large\bfseries}]
		\node[main node] (1) {$g$};
		\node[main node] (2) [below of =1] {$1$};
		
		\path[every node/.style={font=\sffamily\small}]
		(1) edge node [] {} (2) node [xshift=30,yshift=-35] {$p_e = 2g-1$};
		%	(1) edge[bend right] node [left] {} (2)
		%	(3) edge node [right] {} (4)
		%	(2) edge[bend left] node [right] {} (3);
		
		\draw (2) -- +(-135: 1) node [xshift=0,yshift=-10] {$m_1$};
		\draw (2) -- +(-45:1) node [xshift=10,yshift=-5] {$m_n$};
		\draw (2) -- +(-90:1) node [xshift=5,yshift=-5] {$m_2\cdots$};
		
		\node [xshift=20,yshift=-10] {even};
	\end{tikzpicture} 
\end{center}
which is of compact type, with the genus one bottom level carrying all the markings.
\par
\begin{lemma} There is a well-defined, injective clutching map on the
open boundary strata, 
\bes
{\zeta_1}\colon \bP\omoduli[g,1](2g-2)^{\even} \times
\bP\omoduli[1,n+1](-2g, \mu) \rightarrow \bP\LMS[\mu][g+1,n]
\ees
whose range lies in the boundary stratum~$D_{\Gamma_1}$. This clutching map
extends to a map
\bes
\overline{\zeta}_1\colon \bP\LMS[2g-2][g,1]^{\even} \times
\bP\omoduli[1,n+1](-2g, \mu) \rightarrow \bP\LMS[\mu][g+1,n]
\ees
on the multi-scale compactification of the top-level factor.
\end{lemma}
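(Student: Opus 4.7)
The plan is to define $\zeta_1$ by explicit gluing on closed points, verify the multi-scale data, and extend over the boundary of the top factor by the clutching construction of \cite[Section~4.2]{CMZEuler}.

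For the pointwise definition, given $(X,\omega_X, p) \in \bP\omoduli[g,1](2g-2)^{\even}$ and $(Y, \omega_Y, q_0, q_1,\ldots, q_n) \in \bP\omoduli[1,n+1](-2g,\mu)$, I would form the nodal curve $Z = X \sqcup Y / (p \sim q_0)$ of genus $g+1$ with $n$ markings, and equip it with the pair $(\omega_X, \omega_Y)$ as top- and bottom-level differentials. The orders at the node match the enhancement $p_e = 2g-1$, since $\omega_X$ has a zero of order $p_e-1$ at $p$ and $\omega_Y$ has a pole of order $p_e+1$ at $q_0$. The global residue condition is automatic, because the bottom genus-one component carries a single pole and the residue theorem then forces $\Res_{q_0}\omega_Y = 0$. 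For the prong matching at the unique edge there are exactly $p_e$ choices, and the level-rotation group of order $\ell_{\Gamma_1} = p_e$ permutes them simply transitively; hence any choice yields the same point of $D_{\Gamma_1}$. Well-definedness on the projectivized product is then automatic: the $\bC^*$-rescaling of $\omega_X$ is the overall projectivization of $\bP\LMS$, while the $\bC^*$-rescaling of $\omega_Y$ corresponds to the level-rotation action on the bottom differential. Injectivity follows by reversing the gluing: from the image one recovers the dual graph of $Z$, its normalization into top and bottom components with the prescribed markings, and the two differentials up to scaling, which together reconstitute the input pair.

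For the extension to $\overline{\zeta_1}$ I would invoke the clutching construction of \cite[Section~4.2]{CMZEuler}. A family of multi-scale differentials over a base $B$ with classifying morphism $B \to \bP\LMS[2g-2][g,1]^{\even}$ and level graph $\Gamma'$, combined with the fixed bottom datum $(Y, \omega_Y)$, assembles into a morphism $B \to \bP\LMS[\mu][g+1,n]$ whose image lies in the boundary stratum with level graph obtained by placing $\Gamma'$ above level $-1$ of $\Gamma_1$. Prong matchings on the upper edges are inherited from $\Gamma'$, the matching on the edge of $\Gamma_1$ is absorbed as in the previous paragraph, and no new global residue condition is imposed since the bottom component acquires no new pole. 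The main obstacle will be to verify that, in this specific setting of a compact-type edge with residueless genus-one bottom, the level-rotation quotient appearing in the general clutching construction of \cite[Section~4.2]{CMZEuler} is already absorbed by the projectivization of the bottom factor, so that the clutching morphism is literally the map from the product rather than a finite cover of it; this reduction follows from the simple transitivity of $\mu_{p_e}$ on prong matchings at the unique edge.
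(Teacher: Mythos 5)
Your treatment of the interior map $\zeta_1$ is correct and follows the same route as the paper: the orders at the node match the enhancement $p_e=2g-1$, the global residue condition is vacuous for a genus-one bottom component with a single pole, and the $p_e$ prong-matchings at the unique edge form a single orbit under the level rotation, so well-definedness and injectivity on the open boundary stratum go through.

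The gap is in the extension $\overline{\zeta_1}$, which is the substantive half of the lemma. You write that ``prong matchings on the upper edges are inherited from $\Gamma'$'' and that the matching on the new edge is ``absorbed as in the previous paragraph,'' but the simple transitivity of $\mu_{p_e}$ on the prong torsor of a \emph{unique} edge is an argument about the two-level graph $\Gamma_1$ only. When the top factor degenerates to a multi-level graph $\Pi$, the combined graph $\Pi_1$ has many edges, its twist group is strictly larger than that of $\Pi$, and a priori a prong-matching equivalence class for $\Pi$ together with an arbitrary choice on the new edge need not determine a well-defined class for $\Pi_1$: if the new edge crossed several level passages (i.e.\ were long), several level rotations would act on its prong torsor and the orbit structure would not factor as a product. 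The missing ingredient is the structural observation the paper makes: in any boundary divisor $\Pi$ of $\bP\LMS[2g-2][g,1]^{\even}$ the unique zero sits on the unique lowest-level vertex, so the new edge attaches at the bottom of $\Pi$ and crosses only one level passage. Consequently only the new bottom-level rotation acts on its prongs (transitively, and trivially on all other edges), the prong-matching equivalence classes of $\Pi$ and $\Pi_1$ are in natural bijection, and the ghost groups satisfy $K_\Pi\cong K_{\Pi_1}$, so the local orbifold structures agree. Without this observation (and without addressing the ghost groups at all), your claim that the clutching morphism is literally a map from the product rather than a finite cover remains, as you yourself flag, an unverified ``main obstacle'' rather than a proved step.
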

\par
\begin{proof} Well-definedness and injectivity in the first statement hold 
because all the prong-matchings are equivalent for graphs with just one edge.  
\par
The obstruction to extending $\zeta_1$ to the boundary stems from assigning well-defined
prong-matching equivalence classes and handling the local
orbifold structure given by the ghost group, which is the quotient of the twist group
by the simply twist group (see \cite[Section~2.1]{Kodstrata}). We observe that
in any boundary divisor~$\Pi$ of $\bP\LMS[2g-2][g,1]^{\even}$, the lowest level contains a unique vertex,
and the half-edge representing the unique zero is connected to it. Let $\Pi_1$ be the level graph obtained
by extending~$\Pi$ with the lower level of~$\Gamma_1$. We note that
in~$\Gamma_1$, no edge becomes long. This implies that the prong-matching
equivalence classes for~$\Gamma$ and~$\Gamma_1$ are in natural bijection.
\par 
Moreover, the ghost group $K_{\Gamma_1} = \Tw[\Gamma_1]/\sTw[\Gamma_1]$ is trivial, as it is for any two-level graph. More generally, starting with any~$\Pi$,
there is a natural isomorphism of ghost groups $K_{\Pi} \cong K_{\Pi_1}$ since there is no long edge adjacent to the newly created lowest level vertex.
This means that there are no obstructions to extending~$\zeta_1$: we simply join a multi-scale differential from  $\bP\LMS[2g-2][g,1]^{\even}$, which is compatible with~$\Pi$, and a differential from $\omoduli[1,n+1](-2g, \mu)$ to a multi-scale differential compatible with~$\Pi_1$ by using an arbitrary prong-matching along the edge stemming from~$\Gamma_1$.
\end{proof}
\par
We consider a generic element $(E, q, z_1,\ldots, z_n) \in \bP\omoduli[1,n+1](-2g,
\mu)$ and denote the restriction of the clutching map $\overline{\zeta}_1$ by
\bes
{\zeta_E}\colon \bP\LMS[2g-2][g,1]^{\even}  \rightarrow \bP\LMS[\mu][g+1,n] \,.
\ees
For graphs $\Delta \neq \Gamma_1$ corresponding to boundary divisors of
$\bP\LMS[\mu][g+1,1]$, we define $\zeta_E^*\Delta$ in the following way:
\begin{enumerate}
	\item If not all half-edges $z_1, \ldots, z_n$ are on the same vertex at level $-1$ of $\Delta$, then we define $\zeta_E^*\Delta \coloneqq 0$.
	\item If all half-edges $z_1,\ldots, z_n$ are on the same vertex at level $-1$ of $\Delta$, then we define $\zeta_E^*\Delta$ to be the level graph obtained from $\Delta$ by decreasing the genus of the lower level by one and replacing the half-edges $z_1, \ldots, z_n$ with $z$. 
\end{enumerate}
\begin{prop} \label{prop: pullback} The map induced by $\zeta_E$ at the level of Picard groups 
\[\zeta_E^*\colon \mathrm{Pic}(\bP\LMS[\mu][g+1,n])
\rightarrow \mathrm{Pic}(\bP\LMS[2g-2][g,1]^{\even}) \]
	satisfies 
	\[ \zeta_E ^*[D_\Delta]  \= [D_{\zeta_E^*\Delta}] \ \mathrm{for} \ \mathrm{all} \ \Delta \neq \Gamma_1\,. \] 
	Moreover 
\[\zeta_E ^*\lambda_1 \= \lambda_1, \quad \zeta_E^*\psi_i \= 0 \ \mathrm{for} \
\mathrm{all} \  1\leq i\leq n, \quad \mathrm{and} \quad \zeta_E^*[D_{\Gamma_1}]
\= -\psi\,.  \]
\end{prop}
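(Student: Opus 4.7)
The strategy is to verify the four identities separately, exploiting the fact that $\zeta_E$ holds the bottom factor $(E,q,z_1,\ldots,z_n)$ fixed. The identities $\zeta_E^*\lambda=\lambda$, $\zeta_E^*\psi_i=0$, and the description of $\zeta_E^*[D_\Delta]$ for $\Delta\neq\Gamma_1$ will follow from the product structure of the clutching construction and should be fairly direct, while the self-intersection $\zeta_E^*[D_{\Gamma_1}]=-\psi$ requires a normal bundle calculation along a two-level boundary divisor in $\bP\LMS$ and will be the technical core.

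\textbf{Tautological classes.} For $\zeta_E^*\lambda=\lambda$ I would use the splitting of the Hodge bundle on a clutched family of nodal curves into the Hodge bundles of the two normalization components. Since the bottom component is held fixed, its Hodge bundle pulls back to a trivial line bundle, leaving only the Hodge class of the top component, which is $\lambda$ on $\bP\LMS[2g-2][g,1]^{\even}$. For $\zeta_E^*\psi_i=0$ with $1 \le i \le n$, each marking $z_i$ lies on the fixed elliptic curve $E$, so its universal cotangent line is constant along $\mathrm{Im}(\zeta_E)$ and the pullback vanishes.

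\textbf{Boundary classes $D_\Delta$ with $\Delta\neq\Gamma_1$.} For such $\Delta$ I would study $\mathrm{Im}(\zeta_E)\cap D_\Delta$. Any point of $\mathrm{Im}(\zeta_E)$ has all its markings on the single elliptic bottom vertex of $\Gamma_1$, so if a generic point of $D_\Delta$ distributes the $z_i$ over more than one lowest-level vertex of $\Delta$ then the intersection is empty and $\zeta_E^*[D_\Delta]=0$. In the contrary case, the level graph $\zeta_E^*\Delta$ obtained by deleting the elliptic bottom vertex and replacing its $n$ half-edges by the single half-edge $z$ parametrizes the intersection, and a local plumbing computation verifies that $\mathrm{Im}(\zeta_E)$ meets $D_\Delta$ transversely, so that $\zeta_E^*[D_\Delta]=[D_{\zeta_E^*\Delta}]$.

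\textbf{The self-intersection.} Because $\mathrm{Im}(\zeta_E)\subseteq D_{\Gamma_1}$, the class $\zeta_E^*[D_{\Gamma_1}]$ is the restriction of the normal bundle $N_{D_{\Gamma_1}/\bP\LMS[\mu][g+1,n]}$ to $\mathrm{Im}(\zeta_E)$. For the two-level graph $\Gamma_1$, which has a single vertical edge of prong $p_e=2g-1$, the LMS plumbing description near $D_{\Gamma_1}$ realizes this conormal bundle in terms of the cotangent lines at the two half-edges of the edge, suitably normalized by the prong structure. The contribution from the bottom half-edge $q\in E$ vanishes after pullback since $E$ is fixed, leaving only the contribution from the top half-edge, which is the $\psi$ class at the unique zero on $\bP\LMS[2g-2][g,1]^{\even}$. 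The hard part will be bookkeeping: correctly tracking the prong factor and the orbifold structure of $\bP\LMS$ along $D_{\Gamma_1}$. I would handle this by using the compatibility already observed in the preceding lemma, namely that $\Gamma_1$ introduces no long edges and extends $K_\Pi \cong K_{\Pi_1}$ and the prong-matching equivalence classes trivially, so the local calculation reduces to the classical compact-type clutching picture on $\overline{\mathcal M}_{g+1,n}$, from which the sign and the coefficient $-1$ can be read off directly.
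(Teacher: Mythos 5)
Your proposal is correct and follows essentially the same route as the paper: the $D_\Delta$ case is handled identically (empty intersection unless all markings sit on one lowest-level vertex, then multiplicity one from normal crossings/transversality), and your key step for $\zeta_E^*[D_{\Gamma_1}]=-\psi$ — reducing, after checking that prongs and ghost groups cause no trouble, to the classical compact-type clutching picture on $\overline{\mathcal M}_{g+1,n}$ where the self-intersection is $-\psi_q-\psi_{q'}$ with the fixed-elliptic-curve contribution vanishing — is exactly the paper's commutative-diagram argument comparing $\zeta_E$ with $\overline{\mathcal M}_{g,1}\to\overline{\mathcal M}_{g+1,n}$. The only cosmetic difference is that you compute $\zeta_E^*\lambda$ and $\zeta_E^*\psi_i$ directly from the Hodge-bundle splitting and the constancy of the cotangent lines on the fixed factor, whereas the paper reads all three identities off the same diagram.
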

\begin{proof}
All curves parameterized by the image of $\zeta_E$ have the half-edges $z_1,\ldots, z_n$ on the elliptic component, which forces this component to be at the lowest level of the associated level graph. In particular, if a graph $\Delta$ does not have all half-edges on the (unique) vertex at the lowest level, then $D_\Delta$ and $\textrm{Im}(\zeta_E)$ do not intersect. 
\par	
If a graph $\Delta \neq \Gamma_1$ has all half-edges on the (unique) vertex at the lowest level, then the curves parameterized by the intersection $D_\Delta \cap D_{\Gamma_1}$ will have the graph $\zeta_E^*\Delta$ at levels~$0$ and~$-1$, glued via the edge $q$ to an elliptic curve containing $z_1, \ldots, z_n$ at level~$-2$.  
\par	
Because the boundary divisors of $\mathbb{P}\Xi\overline{\mathcal{M}}_{g+1,n}(\mu)$ are normal crossing and the map $\zeta_E$ injects into $D_{\Gamma_1}\subseteq \mathbb{P}\Xi\overline{\mathcal{M}}_{g+1,n}(\mu)$, it follows that when we pull back boundary divisors $D_\Delta$ by $\zeta_E$, they will appear with multiplicity $1$ (or $0$ if the divisors $D_{\Gamma_1}$ and $D_\Delta$ do not intersect). Together, these facts imply 
	\[ \zeta_E^*[D_\Delta]  \= [D_{\zeta_E^*\Delta}] \ \mathrm{for} \ \mathrm{all} \ \Delta \neq \Gamma_1\,. \]
\par 	
To compute the pullback of the other divisor classes, we consider the diagram
	\[
	\begin{tikzcd}
\bP\LMS[2g-2][g,1]^{\even} \ar[d] \arrow{r}{\zeta_E} & \bP\LMS[\mu][g+1,n]  \ar[d] \\
\mm_{g,1}\arrow{r}{}& \mm_{g+1,n}
	\end{tikzcd}
	\] 	
The pullbacks at the level of Picard groups are well understood for all
maps in the diagram except for~$\zeta_E$; see \cite[Section 2.7]{MulMark} or \cite{cornintersection} for the map between the moduli spaces of curves, and \cite[Section~6]{Kodstrata} for the vertical maps. Therefore, we obtain 
\[ \zeta_E^*[D_{\Gamma_1}] \= -\psi, \quad \zeta_E^*\lambda_1 \= \lambda_1, \quad
\ \mathrm{and} \quad  \zeta_E^*\psi_i \= 0  \]
for $1\leq i\leq n$. 	
\end{proof}
\par
We recall that the class $\xi = c_1(\mathcal{O}(-1))$, the first Chern class of the
tautological bundle on $\bP\LMS[\mu][g+1,n]$, pulls back to the respective $\xi$-class 
 on the top level of any boundary stratum. Therefore, we can write with the intended ambiguity
\be \label{eq:xipullback}
\zeta_E^*\xi \= \xi\,.
\ee
(The reader can further verify the above relation by using Proposition~\ref{prop: pullback} and
\be \label{eq:xiconversion}
\xi \= (2g-1)\psi- \sum_{\Delta\in {\LG_1}} \ell_\Delta[D_\Delta]
\ee
from \cite[Formula (38)]{Kodstrata}.)
\section{Limit linear series and chains of divisors}
%\section{Brill--Noether theory and limit linear series}
\label{sec:BN}
%%%%%%%%%%%%%%%%%%%%%%%%%%%%%%%%%%%%%%%%%%%%%%%%%%%%%%%%%%

Our goal is to use the map~$\zeta_E$ to pull back generalized Weierstrass divisors and obtain effective
divisors on $\bP\LMS[2g-2][g,1]^{\even}$. To ensure that the resulting divisor  on
$\bP\LMS[2g-2][g,1]^{\even}$ is effective, we need to verify that the divisor we pull
back is not supported on the image of $\zeta_E$. By defining a 
generalized Weierstrass divisor as the degeneracy locus of a vector bundle
morphism (see~\eqref{eq:defgenWG} below), our task reduces to determining the exact order of vanishing along the divisor $D_{\Gamma_1}$. We will accomplish this by applying the theory of limit linear series to study vanishing orders of limit canonical series on various chains of divisors. 

%%%%%%%%%%%%%%%%%%%%%%%%%
\subsection{Limit linear series} 
%%%%%%%%%%%%%%%%%%%%%%%%%

In their fundamental work on Brill--Noether Theory (see \cite{limitlinearbasic}), Eisenbud and Harris aimed to understand line bundles of a given degree with many global sections. 
%To achieve this, they employed the concept of linear series. 
We briefly recall the related definition and notation. A linear series $g^r_d$ on $X$ is a pair $\ell = (L,V)$, where $L$  is a line bundle of degree $d$ on $X$ and $ V\subseteq H^0(X,L)$ is an $(r+1)$-dimensional subspace of the space of global sections of $L$. The variety parameterizing all $g^r_d$'s on a curve $X$ is denoted by $G^r_d(X)$. 
%\begin{definition}
%	Let $X$ be a smooth curve of genus $g$. A linear series $g^r_d$ on $X$ is a pair $\ell = (L,V)$, where $L$
%	 %\in \textrm{Pic}^d(X)$ 
%	 is a line bundle of degree $d$ on $X$ and  $ V\subseteq H^0(X,L)$ is an $(r+1)$-dimensional subspace of the space of global sections of $L$. 
%	%\[ V\subseteq H^0(X,L)\,.\] 
%	The variety parameterizing all $g^r_d$'s on a curve $X$ is denoted by $G^r_d(X)$. 
%\end{definition}
\par
Given distinguished points $z_1, \ldots, z_n\in X$, it is natural to examine the vanishing orders of the linear series at these points. We begin with the classical definition from the literature, concerning a unique point $z\in X$ (see \cite[Section 0]{limitlinearbasic}), and then provide the more general definition of multi-vanishing orders, as found in \cite[Definition 1.1]{Ossermandim} and \cite[Definition 4.2]{Ossermancompacttype}. 
\par
\begin{definition}	Let $z \in X$, and let $\ell = (L, V)$ be a $g^r_d$ on $X$. Consider the vanishing orders of the sections of $V$ at the point $z$. Since $V$ is $(r+1)$-dimensional,
there are exactly $r+1$ distinct vanishing orders, which, when sorted in strictly increasing order, 
form the \emph{vanishing sequence}
\[a^{\ell} (z): 0\leq a_0^{\ell}(z) < a_1^{\ell}(z) < \cdots < a_r^{\ell}(z) \leq d\]
with respect to~$z$.
\par
More generally, consider a chain $\textbf{D}$ of effective divisors on $X$:  
\[ 0 \= D_0 < D_1 <\cdots < D_{d} \]
satisfying $\deg(D_i) = i$ for every $0\leq i\leq d$. We say that a section $s \in V$ has \emph{multi-vanishing order~$i$} with respect to $\textbf{D}$ if 
\[ s \in V\cap H^0(C,L-D_i) \ \textrm{and} \ s \not\in V\cap H^0(C,L-D_{i+1}), \ \textrm{when} \ 0\leq i \leq d-1, \ \textrm{or} \]
\[ s \in V\cap H^0(C,L-D_i) \ \textrm{when} \ i = d.\]  
As before, there are exactly $r+1$ multi-vanishing orders, which form a
\emph{multi-vanishing sequence}
\[a^{\ell} (\textbf{D}): 0\leq a_0^{\ell}(\textbf{D}) < a_1^{\ell}(\textbf{D}) < \cdots < a_r^{\ell}(\textbf{D}) \leq d\]
with respect to~$\textbf{D}$.
\end{definition}
\par 
Vanishing orders at points play an important role in understanding linear series as smooth curves degenerate to a nodal curve of compact type (i.e., one whose dual graph is a tree). To briefly describe how to interpret the linear series in the limit, consider a generically smooth family of curves 
\[ \pi\colon X \rightarrow B\]
with a family $(L_b, V_b)$ of linear series $g^r_d$ for $b\neq 0$, and a central fiber $X_0$ of compact type. The line bundle $L$ over $X \setminus X_0$ does not extend uniquely over $X_0$: it is only unique up to twisting with the components of the central fiber $X_0$. Since $X_0$ is of compact type, we can twist the line bundle in such a way that its degree is $d$ on a chosen irreducible component and $0$ on all the others. By also considering how the sections degenerate, we obtain a $g^r_d$ for each irreducible component of $X_0$, subject to vanishing conditions at the nodes (see \cite[Proposition 2.2]{limitlinearbasic}). This motivates the definition of limit linear series: 
\par
\begin{definition} \label{def:limit linear series} Let $X$ be a curve of compact type. A \emph{limit linear series $g^r_d$} on $X$ is a collection of linear series, one for each irreducible component
of $X$:   
\[\ell \= \{\ell_Y \=(L_Y,V_Y)\in G^r_d(Y) \ | \ Y\subseteq X \text{ is an irreducible component}\}\,,\]
	satisfying the following compatibility conditions: if $Y$ and $Z$ are irreducible components of $X$ intersecting at a node $q=Y\cap Z$, then \[a^{\ell_Y}_i(q) + a^{\ell_Z}_{r-i}(q) \geq d \quad \text{ for all $0\leq i \leq r$.}\]  
	The pair $\ell_Y= (L_Y, V_Y)$ is called the \emph{$Y$-aspect} of the limit linear series $\ell$. If, for every node, all inequalities are equalities, the limit linear series is called \emph{refined}. 
\end{definition}
\par
This definition can be extended to linear series with multi-vanishing orders with respect to a chain of divisors. 
\par
\begin{definition}
If $X$ is a curve of compact type and $\textbf{D}$ is a chain of divisors supported entirely on an irreducible component $Z$, a \emph{limit linear series $g^r_d$ on $X$ having multi-vanishing orders with respect to $\textbf{D}$ greater than or equal to $a = (a_0,\ldots, a_r)$} is a limit linear series $\ell$ as defined in Definition~\ref{def:limit linear series} that further 
%a collection of linear series, one for each irreducible component of $X$: 
%\[\ell \=\{\ell_Y \=(L_Y,V_Y)\in G^r_d(Y) \ | \ Y\subseteq X \text{ is an irreducible component}\}\]
satisfies 
	\[ a_i^{\ell_Z}(\textbf{D}) \geq a_i \quad \textrm{for all} \ 0\leq i \leq r.\]	
\end{definition}
\par
We are mainly interested in the case where $r = g-1$ and $d = 2g-2$, and want to understand the space of canonical linear series 
\[ G^{g-1}_{2g-2}(X) = \left\{(\omega_X, H^0(C, \omega_X))\right\}\]
as $X$ degenerates to the boundary of the moduli space. More precisely, we would like to understand
the limit canonical series on a generic element of $D_{\Gamma_1}$. By generalizing
the methods used in \cite{Cukierman}, we will show that
these limit canonical series are refined, and the vanishing orders
are $(0,1,\ldots, g)$ with respect to a chain of divisors supported on the marked points.  
\par
%
%Brill--Noether Theory provides us with the necessary technique for understanding how the sections of canonical bundle behave as $X$ degenerate to the boundary. We want to compute the vanishing order of the morphism
%IMPROVE THESE SENTENCES
%\[\mathcal{H}/\OO(-1) \rightarrow \mathcal{F}_{\alpha}\] 
%along the divisor $D_{\Gamma_1}$ by studying canonical linear series for curves in this divisor. This is an adaptation of the methods used in \cite{Cukierman} to do this for the morphism determining the Weierstrass divisor. 
%
%In order to adapt this argument,
%
To achieve this, we will rely on the work of Bullock concerning the canonical
series $\ell = (\omega_X, H^0(X,\omega_X))$  on a curve $(X,z) \in \omoduli[g](2g-2)$.
%see \cite{bullocksubcanonical}. When $[X,z]$ is generic in a component of the stratum,
%he computes the order of vanishing of $l$ at the point $z$ and obtains:
\par
\begin{theorem}\normalfont{(\cite[Main Theorem]{bullocksubcanonical})}
\label{thm:bullock}
Let $(X,z) \in \omoduli[g](2g-2)$. Then the vanishing orders of the canonical
series at $z$ are 
	\begin{itemize}
\item $(0,2, 4,\ldots, 2g-2)$ if $(X,z)$ is generic in
$\omoduli[g](2g-2)^{\textrm{hyp}}$; 
\item $(0,1,2, \ldots, g-2, 2g-2)$  if $(X,z)$ is
generic in $\omoduli[g](2g-2)^{\textrm{odd}}$;  
\item $(0,1,2, \ldots, g-3,g-1, 2g-2)$  if $(X,z)$ is generic in $\omoduli[g](2g-2)^{\textrm{even}}$.
	\end{itemize}
\end{theorem}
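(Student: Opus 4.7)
My plan is to translate the claim into a statement about Weierstrass gap sequences at $z$, and then exploit the subcanonical condition $(2g-2)z \sim K_X$ together with the parity of the associated theta characteristic. Writing $H(z) \subset \mathbb{N}$ for the Weierstrass semigroup and $G(z) = \mathbb{N} \setminus H(z)$ for the gap set, Riemann--Roch applied to $\omega_X(-kz)$ shows that $a$ appears in the vanishing sequence of $H^0(X,\omega_X)$ at $z$ if and only if $a+1 \in G(z)$. Thus the three claimed vanishing sequences correspond to the gap sets $\{1,3,\ldots,2g-1\}$, $\{1,2,\ldots,g-1,2g-1\}$, and $\{1,2,\ldots,g-2,g,2g-1\}$, respectively. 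The subcanonical hypothesis immediately forces $2g-1 \in G(z)$ and $2g-2 \in H(z)$, and $\theta \coloneqq \OO_X((g-1)z)$ is a theta characteristic whose parity $h^0(\theta) \bmod 2$ equals the spin parity of the ambient component; generically $h^0(\theta) = 1$ in the odd case (so $g-1 \in G(z)$) and $h^0(\theta) = 2$ in the even case (so $g-1 \in H(z)$).

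For the hyperelliptic component, I would first argue that a subcanonical $z$ on a hyperelliptic curve $X$ must be a ramification point of the hyperelliptic involution $\sigma$. If $z \neq \sigma(z)$, then $z + \sigma(z) \sim \mathfrak{g}^1_2$ and $(2g-2)z \sim K_X \sim (g-1)\mathfrak{g}^1_2$ combine to yield $(g-1)z \sim (g-1)\sigma(z)$; these are distinct effective divisors in the same linear series, forcing $h^0((g-1)z) \geq 2$ and contradicting the fact that $h^0(kz) = 1$ for $k \leq g$ at any non-Weierstrass point of a hyperelliptic curve. Hence $2z \sim \mathfrak{g}^1_2$, so $2 \in H(z)$, and the classical theory of hyperelliptic Weierstrass points gives $G(z) = \{1,3,\ldots,2g-1\}$.

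For the non-hyperelliptic components, non-hyperellipticity of $X$ forces $2 \in G(z)$ at every point. Combined with the constraints above, $G(z)$ must contain $\{1,2,2g-1\}$ and must contain (respectively omit) $g-1$ in the odd (respectively even) spin case. I would then check that the minimal gap sets compatible with these constraints and with the semigroup property of $H(z)$ are precisely the claimed ones. For odd spin the complement $H(z) = \{0\} \cup \{g,g+1,\ldots,2g-2\} \cup \{2g,2g+1,\ldots\}$ is visibly closed under addition, since any two positive elements sum to at least $2g$. For even spin the complement $H(z) = \{0,g-1\} \cup \{g+1,\ldots,2g-2\} \cup \{2g,2g+1,\ldots\}$ is also a semigroup: the only non-trivial sum to inspect is $(g-1)+(g+k)$ for $k \geq 1$, which gives $2g-1+k \geq 2g$ and so lies in $H(z)$, meaning the forbidden value $2g-1$ is never produced.

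The hard part will be verifying that these minimal gap sequences are \emph{actually attained} on the generic element of each component, rather than merely being compatible with the constraints. I would invoke upper semi-continuity of the Weierstrass weight: the locus in each stratum component where the gap sequence strictly dominates the predicted one is closed and of positive codimension, so it suffices either to exhibit a single example in each component realising the predicted sequence, or to compare the dimension of the subcanonical locus in $\mathcal{M}_{g,1}$ (equal to $g$ for the non-hyperelliptic components) with the codimension of the sublocus on which an additional element is forced into $G(z)$. This last step is the one requiring the most care, and is where Bullock's original argument concentrates its technical effort.
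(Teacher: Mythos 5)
First, a point of reference: the paper does not prove this statement at all --- it is imported verbatim as the Main Theorem of Bullock's paper, and the surrounding text only records that Bullock's argument proceeds by induction on the genus via parity-preserving clutching maps and a limit-linear-series computation of the $E$-aspect at the node and at the marked point. Your proposal therefore takes a genuinely different route (Weierstrass semigroups and theta-characteristic parity rather than degeneration), and its sound parts are real: the gap/vanishing-order dictionary, the symmetry of the semigroup forced by $(2g-2)z\sim K_X$, the observation that $\theta=\mathcal{O}_X((g-1)z)$ is an \emph{effective} theta characteristic whose parity is the spin parity (so that $h^0(\theta)\geq 2$ on the even component), and the hyperelliptic case, which you do complete.

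There are, however, two genuine gaps. (1) Your claim that the semigroup constraints single out the stated gap sets is false in the even case. Even granting $h^0(\theta)=2$, the numerics do not force the unique non-gap below $g$ to be $g-1$: for $g=10$ the set $\{0,5\}\cup(\{10,\dots,18\}\setminus\{14\})\cup\{20,21,\dots\}$ is a symmetric numerical semigroup of genus $10$ with $1,2$ and $2g-1$ as gaps, exactly one non-gap below $g$, and $h^0((g-1)z)=2$. So the claimed sequence is merely the one of smallest Weierstrass weight among the candidates, and genericity must do real work. (2) The step you defer --- that the generic member of each non-hyperelliptic component actually attains this minimal-weight sequence, and that $h^0(\theta)$ is generically $1$ resp.\ $2$ rather than a larger value of the correct parity --- is the entire content of the theorem. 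Upper semicontinuity of the weight only says the locus where the weight exceeds any attained value is closed; it does not produce a point realizing the minimum. You would need either to exhibit an explicit point in each spin component with the stated gap sequence (and certify which component it lies in, which is itself delicate), or to carry out the dimension count you allude to. That is precisely what Bullock's inductive clutching construction supplies, and without it the argument is an outline rather than a proof.
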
 
\par
To obtain this result, Bullock uses parity-preserving clutching maps 
\bes
{\zeta^+_E}\colon \bP\LMS[2g-4][g-1,1]^{\textrm{even}}  \rightarrow \bP\LMS[2g-2][g,1]^{\textrm{even}} \,
\ees
and
\bes
{\zeta^-_E}\colon \bP\LMS[2g-4][g-1,1]^{\textrm{odd}}  \rightarrow \bP\LMS[2g-2][g,1]^{\textrm{odd}} \,,
\ees
similar to the ones considered in Section~\ref{sec:CTclutching}. For a generic element $(X_0\cup_qE,z)$ in the image, Bullock computes the vanishing orders of the $E$-aspect
at both the node~$q$ and the marked point~$z$ (see \cite[Lemma~2.2]{bullocksubcanonical}).
Proposition \ref{prop:even-general-vanishing} below extends this result to curves
$(X_0\cup_qE,z_1,\ldots, z_n)$ in the image of 
\bes
{\zeta_E}\colon \bP\LMS[2g-2][g,1]^{\even}  \rightarrow \bP\LMS[\mu][g+1,n] \,, 
\ees
for any $n\geq 2$. 
\par
In order to generalize Cukierman's argument in \cite{Cukierman}, we need to understand, for the $E$-aspect of the limit canonical series, the interplay between the vanishing orders with respect to~$q$ and with respect to a chain of divisors $\textbf{D}_\bullet(z_1,z_2,\ldots, z_n)$ supported on the points $z_1, \ldots, z_n$. This is described in greater generality in the following lemma, which relates the vanishing orders for two divisorial chains.
\par
\begin{lemma} \label{lemma:two ramification conditions}
Let $X$ be a smooth genus $g$ curve, and let $z_1, \ldots, z_m$ and $q_1,\ldots, q_n$ be
two collections of points on~$X$. Consider two increasing sequences of effective divisors:  
	\[\textbf{D}_\bullet(z_1,z_2,\ldots,z_m) = (0 = D_0 < D_1 < D_2<\cdots < D_d) \]
	and 
	\[\textbf{E}_\bullet(q_1,q_2,\ldots,q_n) = (0 = E_0 < E_1 < E_2<\cdots < E_d)\]
satisfying $\deg(D_i) = \deg(E_i) = i$ for all $0\leq i \leq d$, where all $D_i$'s are supported on the points $z_1, \ldots, z_m$ and all $E_i$'s are supported on the points $q_1, \ldots, q_n$. 
\par	
	Let $(V, L)$ be a $g^r_d$ on $X$ having vanishing orders $(a_0,\ldots, a_r)$ and $(b_0, \ldots, b_r)$ with respect to $\textbf{D}_\bullet$ and $\textbf{E}_\bullet$. Then there exists a basis $s_0,\ldots,s_r$ of $V$ and a permutation~$\sigma$ such that 
	\[ \mathrm{ord}_{\textbf{D}_\bullet}(s_i) = a_i \quad \mathrm{and} \quad \mathrm{ord}_{\textbf{E}_\bullet}(s_i) = b_{\sigma(i)} \quad \forall \ 0\leq i \leq r\,.\] 
\end{lemma}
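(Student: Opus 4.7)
The plan is to interpret the two vanishing sequences as jumps of two decreasing filtrations on $V$ and to build the desired basis by reverse induction, adjusting each candidate section by contributions of strictly higher $\mathbf{D}_\bullet$-vanishing. This reduces the statement to essentially a linear algebra fact about two complete flags on a finite-dimensional vector space.

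Concretely, I will set
\[
F^D_i \;:=\; V \cap H^0(X, L - D_i), \qquad F^E_j \;:=\; V \cap H^0(X, L - E_j),
\]
so that by definition of the vanishing sequences the quotients $F^D_{a_i}/F^D_{a_i+1}$ and $F^E_{b_j}/F^E_{b_j+1}$ are one-dimensional, and $F^D_{a_r}$ and $F^E_{b_r}$ are lines. I construct $s_r, s_{r-1}, \ldots, s_0$ by reverse induction on $i$. For the base case, take $s_r$ to be any generator of $F^D_{a_r}$ and define $\sigma(r)$ by $\mathrm{ord}_{\mathbf{E}_\bullet}(s_r) = b_{\sigma(r)}$.

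For the inductive step, assume $s_r, \ldots, s_{i+1}$ have been constructed with pairwise distinct values $\sigma(i+1), \ldots, \sigma(r)$. Pick any $\tilde{s} \in F^D_{a_i} \setminus F^D_{a_i+1}$ and let $\mathrm{ord}_{\mathbf{E}_\bullet}(\tilde{s}) = b_k$. If $k$ is not among the already-used indices, set $s_i := \tilde{s}$ and $\sigma(i) := k$. Otherwise $k = \sigma(j_0)$ for some $j_0 > i$; then $\tilde{s}$ and $s_{j_0}$ both have nonzero image in the one-dimensional quotient $F^E_{b_k}/F^E_{b_k+1}$, so there is a unique scalar $c$ such that $\tilde{s} - c\, s_{j_0} \in F^E_{b_k+1}$. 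The crucial point is that $a_{j_0} > a_i$ gives $s_{j_0} \in F^D_{a_{j_0}} \subseteq F^D_{a_i+1}$, so the replacement $\tilde{s} \leftarrow \tilde{s} - c\, s_{j_0}$ preserves $\mathrm{ord}_{\mathbf{D}_\bullet}(\tilde{s}) = a_i$ while strictly increasing $\mathrm{ord}_{\mathbf{E}_\bullet}(\tilde{s})$.

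The main point requiring justification is termination of this inner loop. Since $\mathbf{E}_\bullet$-vanishings are bounded above by $b_r$, the only possible failure is reaching $\mathrm{ord}_{\mathbf{E}_\bullet}(\tilde{s}) = b_r$ with $r$ already used as some $\sigma(j_0)$; but then $\tilde{s}$ and $s_{j_0}$ would both lie in the one-dimensional space $F^E_{b_r}$, forcing $\tilde{s} = c'\, s_{j_0} \in F^D_{a_i+1}$, which contradicts $\tilde{s} \notin F^D_{a_i+1}$. Hence the loop always exits at a fresh $\sigma(i)$. The resulting $s_0, \ldots, s_r$ are linearly independent because their $\mathbf{D}_\bullet$-vanishing orders $a_0 < \cdots < a_r$ are pairwise distinct, so they form a basis of $V$, and $\sigma$ is a permutation by construction. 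The only subtlety in the whole argument is the termination check above; everything else is routine filtration bookkeeping.
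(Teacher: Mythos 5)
Your proof is correct and follows essentially the same route as the paper: start from (or build toward) a basis adapted to $\mathbf{D}_\bullet$ and resolve collisions of $\mathbf{E}_\bullet$-orders by subtracting a multiple of the section with strictly larger $\mathbf{D}_\bullet$-order, which bumps the $\mathbf{E}_\bullet$-order without changing the $\mathbf{D}_\bullet$-order. Your version merely makes explicit the filtration bookkeeping and the termination argument that the paper leaves implicit.
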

\begin{proof}
The proof of this proposition is identical to that of the classical case when $m = n = 1$.  Let $s_0, s_1, \ldots, s_r$ be a basis of $V$ satisfying $\mathrm{ord}_{\textbf{D}_\bullet}(s_i) = a_i $ for every $0\leq i \leq r$. Assume that two of these sections, say $s_i$ and $s_j$ with $i< j$, have the same vanishing order $k$ with respect to $\textbf{E}_\bullet$. Let $q = E_{k+1} - E_k$ and consider the differences $s_i - xs_j$ for $x \in \mathbb{C}$. There exists a choice of $x$ such that $s_i - xs_j$ vanishes to order at least one more at $q$, which implies $\textrm{ord}_{\textbf{E}_\bullet}(s_i) < \textrm{ord}_{\textbf{E}_\bullet}(s_i-xs_j)$, while $s_i - xs_j$ and $s_i$ have the same vanishing order with respect to $\textbf{D}_\bullet$. We then replace the section $s_i$ with $s_i-xs_j$. By repeating this procedure a finite number of times, we obtain a basis satisfying the desired conditions.     
\end{proof}
%Unlike the situation for the odd component, see \cite[Section 6]{Kodstrata}, we cannot express this divisor as a degeneration locus of two vector bundles on $\omoduli[g,1]$ having the same rank. The divisor $W_{(2g-2)}(g)$ is the locus where the map 
%\[ \cH/\cO(-1) \rightarrow \cF_g \] 
%has rank at most $g-2$. Because the two vector bundles are of rank $g-1$ and $g$ respectively, the expected codimension of locus is $2$ not $1$. As such, we cannot apply Porteous' Formula to compute its class.

%The intuition on how to compute $[W_1] \in \mathrm{Pic}\Bigl(\bP\LMS[2g-2][g,1]\Bigr)$ comes from the moduli space of curves. We recall that, for the clutching 
%\[ \pi\colon \cM_{g,1}\rightarrow \cM_{g+1,1}\]
%obtained by glueing a generic pointed elliptic curve at the marking, the pullback of the Weierstrass divisor is the Weierstrass divisor. We show that a similar phenomenon happens for generalized Weierstrass divisors. If we consider a map 

%as defined in Section \ref{sec:CTclutching}, the set-theoretic pullback of many generalized Weierstrass divisor is contained in the generalized Weierstrass divisor $W_1$, see Corollary \ref{cor: set-theoretical inclusion}. 
%
%Our goal is to consider a map 
%\bes
%{\zeta_E}: \bP\LMS[2g-2][g,1]^{\even}  \rightarrow \bP\LMS[\mu][g+1,n] \,
%\ees
%and compute the vanishing orders of the limit canonical series with respect to some chain of divisors supported on $\textbf{z}$. We show these vanishing orders are $(0,1,\ldots, g)$ for a special class of chains.
%
\par
As noticed in Theorem \ref{thm:bullock}, the marked point is a very special Weierstrass point with vanishing orders that differ significantly from the generic $(0, 1, \ldots, g)$. We want to restrict ourselves to chains of divisors that are supported on the marked points, with ramification orders that are generic. For this to happen, we need $\sum_{i=1}^n m_iz_i - D_{g}$ to be effective, while $\sum_{i=1}^n m_iz_i - D_{g+1}$ is not. This motivates the following definition. 
\par
\begin{definition}
Let $\mu = (m_1, \ldots, m_n)$ be a positive partition of $2g$ and $(X,\mathbf{z}, \omega)$ an element of $ \bP\LMS[\mu][g+1,n]$. Consider a chain of effective divisors  
\[\textbf{D}_\bullet(\textbf{z}) = (0 = D_0 < D_1 < D_2<\cdots < D_{2g})\] such that all divisors are supported on the points $z_1,\ldots, z_n$ and 
the degree of $D_i$ is $i$ for every $0 \leq i \leq 2g$. Given $1\leq k \leq n$, we say that $\textbf{D}_\bullet(\textbf{z})$ is \emph{$k$-saturated} if it satisfies the following conditions: 
	\begin{itemize}
		%\item All divisors are effective and supported on the points $z_1,\ldots, z_n$.
		%\item The degree of $D_i$ is $i$ for every $0 \leq i \leq 2g$.
		\item For each $i \geq g$, the point $z_k$ appears in the support of $D_i$ with order exactly $(i-g)+ m_k$. 
		\item For each $0\leq i\leq 2g$ and each $j\neq k$, the point $z_j$ appears in the support of $D_i$ with order less than or equal to $m_j$. 
	\end{itemize}
\end{definition}
\par
%Let $\alpha = (\alpha_1, \ldots, \alpha_n)$ be a partition of $g$ satisfying $\alpha_k = m_k$ for some $1\leq k \leq n$. 
We consider multi-vanishing orders with respect to $k$-saturated chains of divisors. Our goal is to show that for a general element of $D_{\Gamma_1}$, the multi-vanishing orders of the limit canonical series with respect to some chain $\textbf{D}_\bullet(\textbf{z})$ are $(0,1,\ldots, g)$, see Proposition \ref{prop:even-general-vanishing}. As in \cite{Cukierman} and \cite{limitlinearbasic}, knowing the exact multi-vanishing orders will allow us to pull back generalized Weierstrass divisors via the map $\zeta_E$. In some instances, we can obtain a geometric description of the pullback divisor, as described in Corollary \ref{cor: obtaining Weierstrass divisor}. In order to apply this corollary, we need to choose a partition $\mu$ and a chain of divisors $\textbf{D}_\bullet(\textbf{z})$ for which the hypothesis of Corollary \ref{cor: obtaining Weierstrass divisor} is satisfied. This will be the content of Proposition \ref{prop: even cases}.     %In particular, if $m_k \geq g+1$, there does not exist a $k$-saturated chain of divisors for that $k$, because otherwise, we would have $\deg D_{2g} \geq (2g-g) + m_{k} > 2g$, which would contradict the assumption that $\deg D_{2g} = 2g$. 
\par
 To show the multi-vanishing orders with respect to certain chain of divisors are generic, we assume by contradiction that they are not. Having non-generic vanishing orders with respect to the concerned chains of divisors is a very strong condition, and we will immediately obtain a contradiction from the existence of a section for the $E$-aspect with unattainable vanishing orders. % with respect to $q$ and a chain of divisors.
\par
In order to simultaneously control the vanishing orders with respect to two distinct chains of divisors, we will require that they agree on the degree $g-1$ divisor.
\par
\begin{prop} \label{prop:even-general-vanishing}
	Let $(X_0\cup_qE, \textbf{z}, \omega)$ be a generic element in the boundary divisor $D_{\Gamma_1}$. Given two distinct indices $1\leq j\neq k \leq n$, suppose $\textbf{D}^j_\bullet(\textbf{z})$ and $\textbf{D}^k_\bullet(\textbf{z})$  are $j$-saturated and $k$-saturated chains of divisors, respectively, where $\textbf{D}^j_{g-1} = \textbf{D}^k_{g-1}$. Then, there exists an index $l\in \{j,k\}$ such that the vanishing sequence with respect to  $\textbf{D}^l_\bullet(\textbf{z})$ for the $E$-aspect of the limit canonical series on $X_0\cup_qE$ is $(0,1,2,\ldots, g)$. Additionally, the limit canonical series on this curve is refined. 
\end{prop}
\begin{proof}
Suppose that the vanishing orders with respect to both $\textbf{D}^j_\bullet(\textbf{z})$ and $\textbf{D}^k_\bullet(\textbf{z})$ are not $(0,1,\ldots, g)$. We will deduce a contradiction.
 \par
Because $(X_0,q)$ is a generic element of $\omoduli[{g}](2g-2)^{\textrm{even}}$, the vanishing orders of $H^0(X_0, \omega_{X_0}(2q))$ at $q$ are $(0,2,3,4,\ldots,g-1, g+1, 2g)$ by Theorem~\ref{thm:bullock}. Using the inequalities in Definition \ref{def:limit linear series}, the vanishing orders at $q$ of the $E$-aspect are greater than or equal to $(0,g-1,g+1, \ldots, 2g-2, 2g)$. Since the last entry cannot be greater than $2g$, it must be equal to $2g$. 
\par
Moreover, the data of an element $(X_0\cup_qE, \mathbf{z}, \omega)$ in $D_{\Gamma_1}$ includes a collection of (possibly meromorphic) differentials, one for each irreducible component. The differential $\varphi_E$ associated with $E$ satisfies 
\[\varphi_E \in H^0\Bigl(E, \omega_E\Big(2gq- \sum_{i=1}^nm_iz_i\Big)\Bigr)\,.\] 
By viewing $\varphi_E$ as a section of $H^0(E, \omega_E(2gq))$ via the obvious inclusion, it becomes a section of the $E$-aspect of the limit canonical series: As a smooth curve $X$ degenerates to $X_0\cup_qE$, the section $\omega \in H^0(X, \omega_X)$ degenerates to a section whose $E$-aspect is $\varphi_E$. 
\par 
In terms of a $k$-saturated chain of divisors, if we denote $D_g = \sum_{i=1}^{n}\alpha_iz_i$, where $\alpha_i \leq m_i$ for all $i$ and $\alpha_k = m_k$, we have
 \[\textrm{div}(\varphi_E) \= D_{g} + \sum_{i=1}^{n}(m_i-\alpha_i)z_i\,.\] 
 Since $m_k-\alpha_k = 0$, it follows that $\varphi_E$ vanishes to order~$g$ with respect to $\textbf{D}_\bullet(\textbf{z})$. Moreover, $\varphi_E$ does not vanish at~$q$. Thus, we conclude that $0$ is a vanishing order with respect to $q$, and that~$g$ is a vanishing order with respect to $\textbf{D}_\bullet(\textbf{z})$. Applying this argument to the two saturated chains of divisors in the hypothesis yields that $0$ is a vanishing order at $q$ and that $g$ is a vanishing order with respect to both chains. 
 \par 
 Combining the above with our assumption that the vanishing orders with respect to $\textbf{D}^k_\bullet(\textbf{z})$ are not $(0,1,\ldots, g)$, the vanishing orders with respect to $\textbf{D}^k_\bullet(\textbf{z})$ must then be greater than or equal to  $ (0,1,\ldots, g-2, g, g+1)$. Using Lemma \ref{lemma:two ramification conditions}, the vanishing orders of the $E$-aspect at $q$ are less than or equal to $(g-2, g-1, g+1, g+2,\ldots, 2g-2, 2g)$. To see this, if one of the orders $i$ is greater than the claimed value, there must be two independent sections of $H^0\big(E, \omega_E(2gq)\big)$ as in Lemma \ref{lemma:two ramification conditions}, whose vanishing orders with respect to $\textbf{D}^k_\bullet (\textbf{z})$ and $q$ add up to $2g$. In particular, we would have the equivalence of divisors
 $ iq + D_{2g-i} \sim 2gq$ 
 for some $i\in \left\{g-1, g,g+2,g+3,\ldots 2g-1\right\}$, which is false for a generic $[E, q,z_1,\ldots, z_n,\varphi_E]$ in the stratum $\omoduli[1](-2g, m_1, m_2, \ldots, m_n)$. 
%\AB{$i\in \left\{g,g+2,g+3,\ldots 2g-1\right\}$. If $i = g-1$ and hence $(g-1)q+ D_{g+1}\sim 2gq$, then we will obtain from Lemma \ref{lemma:two ramification conditions} that the vanishing orders of the $E$-aspect at $q$ are less than or equal to $(g-1,g-1,g+1, g+2,\ldots, 2g-2, 2g)$ and hence less than or equal to $(g-2,g-1,g+1, g+2,\ldots, 2g-2, 2g)$ as desired}. This is false for a generic $[E, q,z_1,\ldots, z_n,\varphi_E]$ in the stratum $\omoduli[1](-2g, m_1, m_2, \ldots, m_n)$. 
\par
Now we have two inequalities for the vanishing orders at $q$, and we know that $0$ is one of the orders, which comes from that of the section $\varphi_E$. These constraints on the vanishing orders determine a unique possibility that the vanishing orders of the $E$-aspect at $q$ are $(0, g-1, g+1, g+2, \ldots, 2g-2, 2g)$. 
\par

Reasoning analogously for the chain of divisors $\textbf{D}^j_\bullet(\textbf{z})$, the vanishing orders with respect to $\textbf{D}^j_\bullet(\textbf{z})$ are greater than or equal to  $ (0,1,\ldots, g-2, g, g+1)$. 

Using Lemma \ref{lemma:two ramification conditions} for the $E$-aspect of the limit canonical series, we obtain the existence of two sections $\sigma_1$ and $\sigma_2$ satisfying 
\begin{align*}
	\textrm{div}(\sigma_1) &= (g-1)\cdot q + D_g^j + x_1 \ \textrm{for some point} \ x_1 \in E \ \textrm{and} \\
	\textrm{div}(\sigma_2) &= (g-1)\cdot q + D_g^k + x_2 \ \textrm{for some point} \ x_2 \in E.
\end{align*}
Notice that the two divisors are not equal, i.e., the sections are not proportional. Otherwise, the equality $D_g^j + x_1 =  D_g^k + x_2$ forces $x_1 = z_k$ and $x_2 = z_j$. This immediately implies that $(g+1) \cdot q \sim D_g^k + z_j$, which is false for a generic $[E, q,z_1,\ldots, z_n,\varphi_E]$ in the stratum $\omoduli[1](-2g, m_1, m_2, \ldots, m_n)$. 

Since the two sections are not proportional, there exists a constant $t \in \mathbb{C}$ so that the vanishing order of $\sigma_1 - t\cdot \sigma_2$ at $q$ is strictly greater than $g-1$. Because the vanishing orders of the limit canonical series at $q$ are $(0,g-1,g+1, g+2,\ldots, 2g-2,2g)$, it follows that  $\sigma_1 - t\cdot \sigma_2$ vanish at $q$ with order at least $g+1$. Since $D^j_{g-1} = D^k_{g-1}$, it follows that $\sigma_1 - t\cdot \sigma_2$ vanishes along this divisor. This implies 
\[ \textrm{div}(\sigma_1 - t\cdot \sigma_2) = (g+1)\cdot q +D^j_{g-1}, \]
leading to the contradiction that $(g-1) \cdot q \sim D^j_{g-1}$. Hence, our hypothesis at the beginning of the proof was wrong, and the first part of the proposition thus follows. 

It remains to prove that the limit canonical series on $(X_0\cup_qE, \textbf{z}, \omega)$ is refined. To see this, consider the elliptic curve $(E\cup_p\mathbb{P}^1, q, z_1, \ldots, z_n)$, where $p-q$ is $2g$-torsion on $E$ and $z_1, \ldots, z_n \in \mathbb{P}^1$. We know that the limit canonical series on the curve $(X_0\cup_qE\cup_p \mathbb{P}^1, \textbf{z}, \omega)$ is refined: Indeed, the vanishing orders of the $E$-aspect at the point $p$ are greater than or equal to $(0,1,\ldots, g-2,g,2g)$ (see Theorem \ref{thm:bullock}). Using Lemma \ref{lemma:two ramification conditions}, the vanishing orders of the $E$-aspect at the point $q$ are less than or equal to $(0, g-1, g+1, \ldots, 2g-2, 2g)$. But the vanishing orders of the $X_0$-aspect at $q$ are equal to $(0,2,\ldots, g-1, g+1, 2g)$. The inequality in the definition of limit linear series implies that the vanishing orders of the  $E$-aspect at the point $q$ are greater than or equal to $(0, g-1, g+1, \ldots, 2g-2, 2g)$, hence the equality. In particular, this implies that the limit canonical series on $(X_0\cup_qE\cup_p \mathbb{P}^1, \textbf{z}, \omega)$ is refined, as desired. Because being refined is an open condition, the same is true for $(X_0\cup_qE, \textbf{z}, \omega)$ as a generic element in the boundary divisor $D_{\Gamma_1}$ of $\bP\LMS[\mu][g+1,n]$.

\end{proof}

We remark that we have only used the genericity of $(X_0,q)$ in  $\omoduli[{g}](2g-2)^{\textrm{even}}$ to conclude that the vanishing orders of the $X_0$-aspect at $q$ are $(0,2,3,\ldots, g-1, g+1, 2g)$. Thus, a corollary of the previous proposition is the following: 

\begin{cor} \label{cor: obtaining Weierstrass divisor}	Let $(X_0\cup_qE, \textbf{z}, \omega)$ be an element in the boundary divisor $D_{\Gamma_1}$ with $(E,q,\textbf{z})$ generic in $\omoduli[1](-2g, m_1, m_2, \ldots, m_n)$. Given two distinct indices $1\leq j\neq k \leq n$, let $\textbf{D}^j_\bullet(\textbf{z})$ and $\textbf{D}^k_\bullet(\textbf{z})$  be $j$-saturated and $k$-saturated chains of divisors, respectively, where $\textbf{D}^j_{g-1} = \textbf{D}^k_{g-1}$. Consider a limit canonical series on $(X_0\cup_qE, \textbf{z}, \omega)$ and assume the vanishing sequence of the $E$-aspect is greater than or equal to $(0,1,\ldots, g-2, g, g+1)$ with respect to both $\textbf{D}^j_\bullet(\textbf{z})$ and  $\textbf{D}^k_\bullet(\textbf{z})$. Then, the vanishing sequence of the $X_0$-aspect at $q$ is strictly greater than $(0,2,3,\ldots, g-1, g+1, 2g)$. In particular, this implies that $h^0(X_0, g\cdot q) \geq 3$. 
\end{cor}
\par
\begin{proof}
  The condition that the vanishing sequence of the $X_0$-aspect at $q$ is strictly greater than $(0,2,3,\ldots, g-1, g+1, 2g)$ is equivalent to the vanishing sequence being greater than or equal to either $(0,2,3,\ldots, g-2, g,g+1, 2g)$ or $(0,2,3,\ldots, g-1, g+2, 2g)$. These conditions both imply $h^0(X_0, g\cdot q) \geq 3$.
\end{proof}
  \par 
%We consider $\bP\LMS[\mu][g+1,n]$ for $\mu$ a non-even partition of $2g$. In this case and the setting of Proposition \ref{prop:even-general-vanishing}, the vanishing orders of the $E$-aspect at $q$ are $(0,g-1, g+1, \ldots, 2g-2, 2g)$, i.e. the limit canonical series is refined. 
\par
Proposition~\ref{prop:even-general-vanishing} motivates the following definition:
\begin{definition}
\label{def:good-D}
Let $(X_0\cup_qE, \textbf{z}, \omega)$ be a generic element in the boundary divisor $D_{\Gamma_1}$ of a stratum  $ \bP\LMS[\mu][g+1,n]$, and let 
\[\textbf{D}_\bullet(\textbf{z}) = (0 = D_0 < D_1 < D_2<\cdots < D_{2g})\]
be a $k$-saturated chain of divisors. We say that $\textbf{D}_\bullet(\textbf{z})$ is \emph{good} if the vanishing orders of the limit canonical series with respect to it are $(0,1,2,\ldots, g)$. 
\end{definition}
\par
%Notice that if $\mu$ has length at least three, Proposition \ref{prop:even-general-vanishing} implies the existence of a good chain of divisors. 
In the next section, we will use such chains of divisors to determine the orders of vanishing of generalized Weierstrass divisors along $D_{\Gamma_1}$. This allows us to obtain effective divisors on $\bP\LMS[2g-2][g,1]^{\even}$ via pullback. Moreover, as a consequence of Corollary~\ref{cor: obtaining Weierstrass divisor}, we have a geometric description of the pullback divisor: It is the Weierstrass divisor with extra vanishing (see Subsection~\ref{sec:VO}). 
\par 
Next, we want to find a setting in which the assumption of Corollary \ref{cor: obtaining Weierstrass divisor} is satisfied, i.e., we want to find a saturated chain with respect to which the vanishing orders are generically $(0,1,\ldots, g-2, g, g+1)$. For this, we consider the case when the partition $\mu$ is even, where one of its entries is~$0$. Namely, let $\mu = (2m_1, 2m_2, \ldots, 2m_n, 0)$ be an even partition and let $(E, q, z_1,\ldots, z_n,p)$ be a generic element in $\bP\omoduli[1,n+2](-2g, \mu)$. We consider the clutching map
\bes
{\zeta_E}\colon \bP\LMS[2g-2][g,1]^{\even}  \rightarrow \bP\LMS[\mu][g+1,n+1]^{\even} \,,
\ees
and consider an $(n+1)$-saturated chain of divisors 
\[\textbf{D}_\bullet(\textbf{z}) \= (0 = D_0 < D_1 < D_2<\cdots < D_{2g})\]
satisfying
\be \label{eq:saturatedchainconditions}
D_g \= \sum_{i=1}^n m_i z_i \quad \text{and} \quad D_{g+j} = \sum_{i=1}^n m_i z_i + j\cdot p
\ee
for any $j\geq 1$. We can then determine the vanishing orders of the limit canonical series with respect to this chain of divisors.
\par
\begin{prop} \label{prop: even cases}
Let $(X_0\cup_qE, \textbf{z}, \omega)$ be a generic element in the boundary divisor $D_{\Gamma_1}$ of a stratum  $ \bP\LMS[\mu][g+1,n+1]^{\even}$, and let  $\textbf{D}_\bullet(\textbf{z}) $ be an $(n+1)$-saturated chain of divisors satisfying~\eqref{eq:saturatedchainconditions}. Then the vanishing orders of the limit canonical series with respect to  $\textbf{D}_\bullet(\textbf{z})$ are $(0,1,2,\ldots, g-2, g, g+1)$.
\end{prop}
\par
\begin{proof}
For a generic element $(X, \textbf{z}, \omega)$ of $ \bP\LMS[\mu][g+1,n+1]^{\even}$, the vanishing orders with respect to the chain $\textbf{D}_\bullet(\textbf{z}) $ are exactly $(0,1,2,\ldots, g-2, g, g+1)$. In particular, when we degenerate to an element $(X_0\cup_qE, \textbf{z}, \omega)$ in the boundary divisor $D_{\Gamma_1}$, the vanishing orders of the limit canonical series with respect to  $\textbf{D}_\bullet(\textbf{z})$ are at least
$(0,1,2,\ldots, g-2, g, g+1)$.

Because $(X_0,q)$ is a generic element of $\omoduli[{g}](2g-2)^{\textrm{even}}$, the vanishing orders of  $H^0(X_0, \omega_{X_0}(2q))$ at $q$ are $(0,2,3,4,\ldots,g-1, g+1, 2g)$ (see Theorem \ref{thm:bullock}). Using the inequalities in Definition \ref{def:limit linear series}, the vanishing orders at $q$ of the $E$-aspect are greater than or equal to $(0,g-1,g+1, \ldots, 2g-2, 2g)$. Since the last entry cannot be greater than $2g$, it must be equal to $2g$. 

Using Lemma \ref{lemma:two ramification conditions}, the first $g-1$ vanishing orders of the $E$-aspect with respect to $\textbf{D}_\bullet(\textbf{z})$ are $0,1,\ldots, g-2$: If the last of these entries were strictly bigger than $g-2$, Lemma \ref{lemma:two ramification conditions} would imply the existence of a section vanishing with order $g+1$ at $q$ and $g-1$ at $\textbf{D}_\bullet(\textbf{z})$. This would imply the equality $ (g+1)\cdot q + D_{g-1} \sim 2g\cdot q$, which is false for a generic element $(E, q, z_1,\ldots, z_n,p) \in \bP\omoduli[1,n+2](-2g, \mu)$. 

Moreover, $g$ is a vanishing order, since it is the vanishing order with respect to $\textbf{D}_\bullet(\textbf{z})$ of the differential $\varphi_E$. Hence we have determined $g$ out of the $g+1$ vanishing orders of the $E$-aspect with respect to  $\textbf{D}_\bullet(\textbf{z})$. 

	    For a generic $p \in E$ we have the equality 
		\[ \dim V_E(-D_g - i\cdot p)  = \max\{\dim V_E(-D_g) - i, 0\}.\]
		Since $\dim V_E(-D_g) = 2$, we immediately obtain that $g+1$ is the last vanishing order with respect to $\textbf{D}_\bullet(\textbf{z})$.
\end{proof}
\section{Generalized Weierstrass divisors for even spin strata}
\label{sec:genWP}
%%%%%%%%%%%%%%%%%%%%%%%%%%%%%%%%%%%%%%%%%%%%%%%%%%%%%%%%%%

In this section, we first recall the definition of generalized Weierstrass
divisors used in \cite{Kodstrata} in a convex combination, along with the Brill--Noether divisor as an effective divisor in the application of Proposition \ref{prop:GenTypeCrit}. 
\par 
Afterwards, we shift our focus to computing the vanishing order along $D_{\Gamma_1}$ of the vector bundle morphism that defines the generalized Weierstrass divisor, utilizing consequences from the limit linear series considerations in the previous section. In Theorem~\ref{theorem:class_twisted_Weierstrass}, we will provide a divisorial
class in the effective cone of $\bP\LMS[2g-2][g,1]^{\even}$ that shares many
similarities with the class of the twisted generalized Weierstrass divisor
in $\bP\LMS[2g-2][g,1]^{\odd}$.
\par
Next, we demonstrate that some generalized Weierstrass loci in even spin components
of non-minimal strata are indeed divisors, and we sketch how this can be used
for Kodaira dimension computations in this context. Finally, we provide examples showing 
that these generalized Weierstrass loci are not irreducible in general.

%%%%%%%%%%%%%%%%%%%%%%%%%%%%%%%%%%%%%%%%%%%%%%%%%%%%%%%%%%
\subsection{Generalized Weierstrass divisors on general strata}
\label{sec:tWP}
%%%%%%%%%%%%%%%%%%%%%%%%%%%%%%%%%%%%%%%%%%%%%%%%%%%%%%%%%%

Let  $\alpha = (\alpha_1, \ldots, \alpha_n)$ be a partition of $g-1$ such that
$0\leq \alpha_i \leq m_i$ for all~$i$, where $\mu = (m_1,\ldots,m_n)$ with
$\sum_{i=1}^n m_i = 2g-2$ is the zero type of a stratum in genus~$g$. This adapts the
setting from \cite[Section~7]{Kodstrata} to the target of the map~$\zeta_E$.
\par 
Set-theoretically, the \emph{generalized Weierstrass divisor} associated
with~$\alpha$ in the interior of a stratum of type~$\mu$ is given by
\be \label{eq:initdefGenW}
W_{\mu}(\alpha) \= \Bigl\{ (X, \bfz, \omega) \in \bP\omoduli[g,n](\mu)\,:\,
h^0\Bigl(X, \sum_{i=1}^n \alpha_i z_i\Bigr) \geq 2\Bigr \}\,.
\ee
To define it as a subscheme and over the boundary, we consider the following
Porteous-type setup. Let~$\omega_{\rel}$ be the relative dualizing bundle of
the universal curve $\pi\colon \cX \to \bP\LMS[\mu][g,n]$, and
let $S_i\subset \cX$ be the marked sections. Moreover, let $\cH=\pi_*(\omega_{\rel})$
be the Hodge bundle over $\bP\LMS$, and $\cO(-1)$ its tautological subbundle. Both 
$\cH/\cO(-1)$ and 
$$\cF_\alpha \= \pi_*\Big(\omega_{\rel}/\omega_{\rel}\Big(-\sum_{i=1}^n
\alpha_i S_i\Big)\Big)$$ 
are vector bundles of rank~$g-1$. We define
\be
\label{eq:vartheta}
%\vartheta \,:=\, 
\vartheta_{\mu, \alpha}\= \sum_{i=1}^n \frac{\alpha_i(\alpha_i+1)}{2(m_i+1)}\,.
\ee
The quantities %$\vartheta^\bot := 
$\vartheta_{\mu_\Gamma^{\bot}, \alpha_\Gamma^{\bot}}$ and
%$\vartheta^\top := 
$\vartheta_{\mu_\Gamma^{\top},\alpha_\Gamma^{\top}}$ are similarly defined for the bottom and top level
strata of $\Gamma$. However, $\alpha_\Gamma^{\bot}$ and $\alpha_\Gamma^{\top}$ assign
a value of zero to each leg associated with an edge $e\in E(\Gamma)$.
\par
\begin{prop}[{\cite[Proposition~7.1]{Kodstrata}}] \label{prop:Wclass}
Let $\bP\LMS[\mu][g,n]$ denote a stratum that is not of even spin type. Then the degeneracy locus
\be \label{eq:degeneracy}
\wt{W}_\mu(\alpha) \= \{ \mathrm{rank}(\phi) < g-1\} \subset \bP\LMS[\mu][g,n]
\ee
of the map
\be \label{eq:defgenWG}
\phi\colon \cH/\cO(-1) \to \cF_\alpha
\ee
defined by taking principal parts, is an effective divisor. %\color{red} in the non-even component of $\bP\LMS[\mu][g,n]$. \color{black} 
Its class is given by
\bas \ 
     [\wt{W}_{\mu}(\alpha)]
& \= \sum_{i=1}^n \frac{\alpha_i(\alpha_i+1)}{2}\psi_i - \lambda_1 + \xi  \nonumber \\
     & \=
\frac{12+12 \vartheta_{\mu, \alpha} - \kappa_\mu}{\kappa_\mu}\lambda_1
- \frac{1+\vartheta_{\mu, \alpha}}{\kappa_\mu} [D_h] \\
& \,-\, \sum_{\Gamma\in\LG_1} \ell_\Gamma \Big( \frac{\kappa_{\mu_\Gamma^{\bot}}}{\kappa_\mu}
(1+\vartheta_{\mu, \alpha}) - \vartheta_{\mu_\Gamma^{\bot}, \alpha_\Gamma^{\bot}}\Big) [D_\Gamma]\,.
\eas 
\end{prop}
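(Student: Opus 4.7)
The approach is a Porteous-type degeneracy-locus computation for the morphism $\phi$ between the rank $g-1$ vector bundles $\mathcal{H}/\mathcal{O}(-1)$ and $\mathcal{F}_\alpha$, combined with a verification that the degeneracy has the expected codimension one.

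\emph{Step 1: Set-theoretic identification.} I would first match the degeneracy locus of $\phi$ with~\eqref{eq:initdefGenW}. Over a point $(X,\bfz,\omega)$, a section of $\mathcal{H}/\mathcal{O}(-1)$ is an abelian differential $\eta$ modulo the tautological $\omega_\mu$, and $\phi$ records its principal parts of orders $\alpha_i$ at each~$z_i$. Thus $\ker \phi$ consists of those $\eta$ with $\ord_{z_i}\eta \geq \alpha_i$ for all $i$, taken modulo $\omega_\mu$ (which already lies in the kernel since $m_i \geq \alpha_i$). By Riemann--Roch on the degree $g-1$ divisor $D_\alpha = \sum \alpha_i z_i$,
\[
\dim \ker \phi_{(X,\bfz,\omega)} \,=\, h^0(X, K_X - D_\alpha) - 1 \,=\, h^0(X, D_\alpha) - 1,
\]
so the degeneracy locus agrees set-theoretically with the right-hand side of~\eqref{eq:initdefGenW}.

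\emph{Step 2: Expected codimension.} Next I would verify that $\widetilde{W}_\mu(\alpha)$ is a proper codimension-one subvariety. The differential $\omega_\mu$ always provides one section of $K_X - D_\alpha$, so $h^0(K_X - D_\alpha) \geq 1$ throughout the stratum; a divisorial condition requires equality at a generic point. For each component of $\bP\LMS[\mu][g,n]$ not of even spin type, I would exhibit a point where $h^0(K_X - D_\alpha) = 1$, e.g.\ by degenerating to a compact-type boundary point and invoking Theorem~\ref{thm:bullock} or the sharper vanishing statement of Proposition~\ref{prop:even-general-vanishing} to check that the vanishing sequences at the marked points are thin enough to allow only $\omega_\mu$ to absorb the prescribed vanishing. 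The excluded minimal even spin case is precisely where Bullock's theorem forces an extra section globally, placing the entire stratum inside the locus, which is why it is excluded from the hypothesis.

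\emph{Step 3: Class computation.} With the expected dimension established, Porteous' formula for equal-rank degeneracy loci yields
\[
[\widetilde{W}_\mu(\alpha)] \,=\, c_1(\mathcal{F}_\alpha) - c_1(\mathcal{H}/\mathcal{O}(-1)).
\]
From the tautological sequence $0 \to \mathcal{O}(-1) \to \mathcal{H} \to \mathcal{H}/\mathcal{O}(-1) \to 0$ one reads $c_1(\mathcal{H}/\mathcal{O}(-1)) = \lambda_1 - \xi$. For $\mathcal{F}_\alpha$, I would filter the sheaf $\omega_{\rel}/\omega_{\rel}(-\sum \alpha_i S_i)$ by order of vanishing along each section $S_i$; the successive graded pieces are the line bundles $(\omega_{\rel}\otimes \mathcal{O}(-jS_i))|_{S_i}$ on $S_i$, with first Chern class $(j+1)\psi_i$ (using $c_1(\omega_{\rel}|_{S_i}) = \psi_i$ together with $c_1(\mathcal{O}(-S_i)|_{S_i}) = \psi_i$ via adjunction). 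Summing over $j=0,\ldots,\alpha_i-1$ and then over $i$ gives $c_1(\mathcal{F}_\alpha) = \sum_i \tfrac{\alpha_i(\alpha_i+1)}{2}\psi_i$, which combined with the previous line yields the first displayed expression for $[\widetilde{W}_\mu(\alpha)]$. The second expression then follows by substituting known formulas for $\xi$ (via~\eqref{eq:xiconversion} and its analogues from~\cite{Kodstrata}) and for the $\psi_i$ in terms of $\lambda_1$, $[D_h]$ and the $[D_\Gamma]$; the quantities $\vartheta_{\mu,\alpha}$ and $\kappa_\mu$ introduced in~\eqref{eq:vartheta} and~\eqref{eq:defkappamu} are designed precisely to package the resulting coefficients, with the $\kappa_{\mu_\Gamma^\bot}/\kappa_\mu$ factors coming from the boundary contribution to the $\xi$-substitution and the $\vartheta_{\mu_\Gamma^\bot,\alpha_\Gamma^\bot}$ terms from the $\psi_i$-substitution on bottom-level legs.

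The main obstacle is Step~2: verifying that $\phi$ has full rank at some point of each component of $\bP\LMS[\mu][g,n]$ not of even spin type. Once this is in hand, Steps~1 and~3 are routine Chern-class bookkeeping on the multi-scale compactification.
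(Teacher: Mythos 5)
This proposition is imported verbatim from \cite[Proposition~7.1]{Kodstrata}; the paper you were given contains no proof of it, so your reconstruction can only be checked against the statement and the surrounding text. Your Steps 1 and 3 are correct and are surely the intended argument: the kernel computation via Riemann--Roch identifies the degeneracy locus with \eqref{eq:initdefGenW}; Porteous in the equal-rank case gives $c_1(\cF_\alpha)-c_1(\cH/\cO(-1))$; the filtration of $\omega_{\rel}/\omega_{\rel}(-\sum_i\alpha_i S_i)$ by order of vanishing along the (disjoint) sections gives $c_1(\cF_\alpha)=\sum_i\binom{\alpha_i+1}{2}\psi_i$; and the second displayed expression is pure substitution via \eqref{eq:xiconversion} and \eqref{eq:eta-xi}, exactly as the paper notes immediately after the proposition.

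The substantive point is Step 2, which you rightly single out as the main obstacle but only sketch. Two remarks. First, your assertion that the minimal even spin case is ``precisely'' where an extra section is forced globally is not accurate: by Theorem~\ref{thm:bullock} the hyperelliptic component of the minimal stratum has vanishing sequence $(0,2,\ldots,2g-2)$ at $z$, so $h^0(X,(g-1)z)=\lfloor (g+1)/2\rfloor\geq 2$ there as well, and that component is likewise entirely contained in the degeneracy locus; the hypothesis ``not of even spin type'' therefore has to be read as restricting to the components where the result is actually applied, and a careful Step 2 must treat each relevant component separately rather than inferring properness from the exclusion of even spin alone. Second, the way to actually carry out Step 2 is the argument this paper performs in Proposition~\ref{prop:properdivisor} for the even spin non-minimal case: merge marked points to reduce the number of zeros, degenerate to a compact-type curve with an elliptic tail, and combine Lemma~\ref{lemma:two ramification conditions} with Theorem~\ref{thm:bullock} to exclude the extra section at a generic point. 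So your plan is viable and follows the expected route, but at the one step that is not routine Chern-class bookkeeping it remains a plan rather than a proof.
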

\par
For the conversion between the two expressions above, we used the relation of tautological divisor classes: 
 \be
 \label{eq:eta-xi}
\kappa_\mu  \xi \=   12 \lambda_1 - [D_h]
- \sum_{\Gamma \in \LG_1} \ell_\Gamma \kbot [D_\Gamma]\,
\ee
from \cite[Proposition~6.1]{Kodstrata} (for strata without simple poles)
and~\eqref{eq:xiconversion}.
\par
The degeneracy locus $\wt{W}_\mu(\alpha)$ will, in general, have components
supported entirely in the boundary. We would like to remove them, but it is not easy to compute the precise vanishing orders at the boundary. However, some vanishing can always be eliminated by twisting the canonical bundle as follows. Let $\cX \to \bP\LMS$ be a relatively minimal semi-stable model
with smooth total space of the universal family, and let $V = \sum_{i\in I_\Gamma}
s_{i,\Gamma} X_{i,\Gamma}$ be an effective Cartier divisor supported on
some components of the fibers over the boundary divisors~$D_\Gamma$,
with chosen multiplicities $s_i \geq 0$. For every~$\Gamma$, we require
that $s_{i,\Gamma} = 0$ for all top-level components, $s_i \leq \ell_\Gamma$
for all bottom-level components, and more generally $s_i \leq k p_e$ on the
$k$-th rational component (counting from the top) that the edge~$e$ has been
replaced within the passage to the semi-stable model. These conditions are
necessary so that taking principal parts still factors through the quotient
by~$\cO(-1)$ in the following adapted setup. We define the
\emph{twisted generalized Weierstrass divisor} to be the effective divisor
class $\wt{W}_\cL(\alpha)$ of the degeneracy locus as in~\eqref{eq:degeneracy}, 
where $\omega_\rel$ has been replaced everywhere by $\cL=\omega_\rel (-V)$.
\par
Optimizing the $s_{i,\Gamma}$ so that boundary components are removed as much as possible is an integral
optimization problem for a quadratic function. To state the solution, we
define, for a given level graph, 
\be
P  \= \sum_{e \in E} p_e \quad {\rm and} \quad P_{-1} \= \sum_{e \in E} 1/p_e\,
\ee
the sum of the prongs and the sum of their reciprocals.
\par
\begin{lemma}[{\cite[Lemma~7.4 and Corollary~7.6]{Kodstrata}}]  \label{le:optdifference}
There exist coefficients $s_{i,\Gamma}$ such that the difference of coefficients
between the twisted and untwisted Weierstrass divisors, written as a 
sum of $\lambda$, $D_h$, and $D_\Gamma$'s, is at least 
\be \label{eq:Deltaestimate}
\Delta_\cL \wt{W}_\Gamma \,:=\, \big([\wt{W}_\mu(\alpha)] -
[\wt{W}_\cL(\alpha)]\big)_{[D_\Gamma]} \,\geq\, \Bigl\lfloor \frac{\ell_\Gamma} 2 \Bigr
\rfloor  \Bigl(\alpha^\bot - \frac12 m^\bot\Bigr)
+ \frac{\ell_\Gamma}8 \Bigl(P - P_{-1}\Bigr)\,
\ee
for every boundary divisor~$D_\Gamma$, where $\alpha^\bot$ and $m^\bot$
are the sums of the $\alpha$'s and $m$'s at all legs adjacent to the bottom level.
\par
Moreover, the twisted Weierstrass divisor vanishes to order $\ell_\Gamma(v^\top -1)/2$
along~$D_\Gamma$, where $v^\top$ is the number of vertices of~$\Gamma$ on the top level.
\end{lemma}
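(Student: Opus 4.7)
The plan is to carry out the computation on a relatively minimal semistable model $\wt{\cX} \to \bP\LMS$ with smooth total space, in which each node of the universal family over a boundary divisor $D_\Gamma$ is resolved. An edge $e \in E(\Gamma)$ with prong $p_e$ is replaced by a chain of rational bridge components whose length is governed by $p_e$ and $\ell_\Gamma$. A permissible vertical divisor $V = \sum s_{i,\Gamma} X_{i,\Gamma}$ is a $\bZ_{\geq 0}$-linear combination of the bottom-level vertex components and of these bridges. The constraints $s_{i,\Gamma} \leq k p_e$ on the $k$-th rational bridge (counted from the top) are precisely the compatibility conditions that keep the twisted principal-parts map $\phi^\cL\colon \cH^\cL/\cO(-1) \to \cF_\alpha^\cL$ well-defined, since otherwise the canonical section would fail to lie in the new tautological subbundle.

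Next I would compute the class difference using Porteous. Both $\wt{W}_\mu(\alpha)$ and $\wt{W}_\cL(\alpha)$ are first Chern classes of maps between rank-$(g-1)$ bundles, so the difference $[\wt{W}_\mu(\alpha)] - [\wt{W}_\cL(\alpha)]$ reduces to a difference of first Chern classes of the source and target bundles. Pushing forward the short exact sequence $0 \to \cL \to \omega_\rel \to \omega_\rel|_V \to 0$ and applying Grothendieck--Riemann--Roch to $\pi$ expresses the change as a concave quadratic polynomial in the $s_{i,\Gamma}$: the quadratic piece $\pi_*(V^2)/2$ is controlled by the intersection matrix of the vertical components (with self-intersection $-2$ on the bridges), while the linear part involves intersections $\omega_\rel \cdot X_{i,\Gamma}$ evaluated by adjunction in terms of $m^\bot$ and the genera of the top and bottom components.

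The key step is then an integer optimization problem, solved for each $\Gamma$ independently: maximize the resulting concave quadratic over the box of admissible $s_{i,\Gamma}$. On the bottom-level vertex components the unconstrained optimum sits near $s = \ell_\Gamma/2$, and integer rounding yields the term $\lfloor \ell_\Gamma/2 \rfloor (\alpha^\bot - \tfrac12 m^\bot)$, where the $\alpha^\bot$ coefficient enters as the linear correction coming from the canonical-subbundle compatibility in the twisted setup. On each rational-bridge chain associated with an edge $e$, the optimizers form an arithmetic progression $p_e, 2p_e, \ldots$ whose squared-residue contributions telescope across the chain to a term proportional to $\ell_\Gamma(p_e - 1/p_e)/8$, reproducing the $\ell_\Gamma(P - P_{-1})/8$ term after summing over $e \in E(\Gamma)$. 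The main obstacle is the bookkeeping: verifying that the separate optima on different chains and on the bottom vertices are mutually compatible with all the admissibility inequalities, and that the integer rounding losses are absorbed into the claimed lower bound rather than eroding it.

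For the moreover statement, the vanishing order $\ell_\Gamma(v^\top - 1)/2$ is a baseline vanishing of $\phi$ that does not depend on any twisting. At a generic point of $D_\Gamma$, the Hodge bundle $\cH$ acquires one scaling-parameter family of sections per top-level component, and only one independent combination survives after modding out the tautological subbundle; hence $\phi$ drops rank by $v^\top - 1$ along $D_\Gamma$, with the $\ell_\Gamma/2$ multiplier arising from the prong normalization of the corresponding cycle class. Separating this baseline from the twisting gain above yields the effective coefficient of $[D_\Gamma]$ in the class of $\wt{W}_\cL(\alpha)$ that feeds into the general-type criterion of Proposition~\ref{prop:GenTypeCrit}.
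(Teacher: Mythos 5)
This lemma is not proved in the paper at all: it is imported verbatim from \cite[Lemma~7.4 and Corollary~7.6]{Kodstrata}, so the only ``proof'' the paper offers is the citation. Your reconstruction does follow the method of that source in outline --- express both degeneracy loci via Porteous as $c_1(\cF_\alpha)-c_1(\cH/\cO(-1))$ for the respective twists, use Grothendieck--Riemann--Roch on the semistable model to write the change under $\omega_\rel\rightsquigarrow\cL=\omega_\rel(-V)$ as a concave quadratic in the coefficients $s_{i,\Gamma}$, and then optimize over the admissible box. But the two places where the stated constants actually get produced are exactly the places you defer. The inequality~\eqref{eq:Deltaestimate} is the output of an explicit integer optimization: one must write down the full quadratic (the intersection numbers of the end components of each chain with the level components are \emph{not} the $-2$ of the interior bridges, and the cross terms between adjacent components do not vanish), check that the tent-shaped optimizer $s\approx\min(kp_e,\ell_\Gamma/2)$ on each chain is simultaneously admissible with the choice $\lfloor\ell_\Gamma/2\rfloor$ on the bottom vertices, and verify that the integer rounding only loses the amount absorbed by the floor in the first summand. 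Saying ``the main obstacle is the bookkeeping'' concedes precisely the content of the cited Corollary~7.6; as written, nothing rules out that the rounding or the box constraints erode the bound below $\ell_\Gamma(P-P_{-1})/8$.

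The ``moreover'' clause is the more serious gap. A rank drop of $v^\top-1$ of the map $\phi$ along $D_\Gamma$ would only give vanishing order at least $v^\top-1$ for the determinant; it does not by itself produce the factor $\ell_\Gamma/2$, and ``prong normalization of the corresponding cycle class'' is not an argument. That factor has to come from computing the order in the transverse parameter $t$ to which the relevant matrix entries vanish, i.e.\ from the local rescaling structure of multi-scale differentials near $D_\Gamma$ (top-level versus bottom-level components scale by different powers of $t$ governed by $\ell_\Gamma$ and the $p_e$), and none of that analysis appears in your sketch. Also, the assertion that ``only one independent combination survives after modding out the tautological subbundle'' per top-level component is garbled: $\cO(-1)$ is a single line subbundle of $\cH$, not one line per component, so the counting that is supposed to yield $v^\top-1$ is not actually set up correctly.
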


%%%%%%%%%%%%%%%%%%%%%%%%%%%%%%%%
\subsection{Generalized Weierstrass divisors for even spin strata}
\label{subsec:Weierstrass-nonminimal}
%%%%%%%%%%%%%%%%%%%%%%%%%%%%%%%%
Consider a partition $\mu = (m_1,\ldots, m_n)$ of $2g-2$, where all entries are even, and let $\alpha = (\alpha_1, \ldots, \alpha_n)$ be a partition of $g-1$ satisfying
\bes
\alpha \neq \frac{\mu}{2} \qquad  \text{and} \qquad
0\leq \alpha_i \leq m_i \quad \text{for all $1\leq i \leq n$.}
\ees
\par 

Our goal is to show that both Proposition \ref{prop:Wclass} as well as Lemma \ref{le:optdifference} hold for even spin components of strata, as long as $\alpha \neq \frac{\mu}{2}$. In fact, the methods of \cite{Kodstrata} apply verbatim for the even spin strata, as long as the locus 
\[W_{\mu}(\alpha) \= \Bigl\{ (X, \bfz, \omega) \in \bP\omoduli[g,n](\mu)^{\even}\,:\,
h^0\Bigl(X, \sum_{i=1}^n \alpha_i z_i\Bigr) \geq 2\Bigr \} \] 
is not the entire even spin component. We will prove that it is indeed the case.

\begin{prop} \label{prop:properdivisor}
	Let $\mu$ and $\alpha$ be partitions of $2g-2$ and $g-1$, respectively, as introduced above. The locus $W_\mu(\alpha)$ of elements $(X,\mathbf{z}, \omega) \in \omoduli[g,n](\mu)^{\even}$ satisfying 
	\[h^0\Bigl(X, \sum_{i=1}^{n}\alpha_iz_i\Bigr) \geq 2\] 
	is a divisor. 
\end{prop}
\begin{proof}
	The generalized Weierstrass divisor $W_\mu(\alpha)$ is the locus where the morphism of vector bundles
	\[\phi\colon\mathcal{H}/\OO(-1) \rightarrow \mathcal{F}_{\alpha}\]
	degenerates (see Proposition \ref{prop:Wclass}). It follows that $W_\mu(\alpha)$ has codimension at most one in $\omoduli[g,n](\mu)^{\even}$. Hence, we only need to show that it is not the whole stratum. 
	\par
	To this end, we consider a partial closure of the generalized Weierstrass divisor in the locus of curves with rational tails, where rational tails arise when some marked points meet. For notation simplicity, we still denote this partial closure by $W_\mu(\alpha)$. We can always find suitable marked points to merge together, making it sufficient to prove the result for a stratum with $n-1$ marked points. More explicitly, if $n\geq 3$, it is impossible to have $\alpha_i + \alpha_j = \frac{m_i}{2} + \frac{m_j}{2}$ for all $1\leq i < j\leq n$. Consequently, there are two indices~$i \neq j$ such that $\alpha_i + \alpha_j \neq \frac{m_i}{2} + \frac{m_j}{2}$. Let 
	\[ \mu' \= (m_1, \ldots, \hat{m}_i, \ldots, \hat{m}_j, \ldots, m_n, m_i+m_j)\,.\]
	Consider the boundary divisor $D_\Delta$ for $\Delta\in \LG_1$, consisting of curves 
	\[ (X_0, z_1, \ldots, \hat{z}_i, \ldots, \hat{z}_j, \ldots, z_n, q, \omega_0) \in \omoduli[g,n-1]^{\even}(\mu') \]
	on the top level, attached via the point $q$ to a rational tail $(R, z_i, z_j, q, \omega_1)$ in the stratum $\omoduli[0,3](m_i,m_j, -m_i-m_j-2)$. If we can show that the generalized Weierstrass ``divisor'' is not equal to the whole stratum for the partitions $\mu'$ and $\alpha' = (\alpha_1, \ldots, \hat{\alpha}_i, \ldots, \hat{\alpha}_j, \ldots, \alpha_n, \alpha_i+\alpha_j)$, then $W_\mu(\alpha)$ does not contain all of~$D_\Delta$, and hence does not equal $\omoduli[g,n](\mu)^{\even}$. By repeating this procedure, we can assume that $n = 2$ from now on.
\par
We consider a stratum $\omoduli[g,2](m, 2g-2-m)^{\even}$ with $m\leq g-1$, and a partition $(\alpha, g-1-\alpha)$ satisfying $\alpha \neq \frac{m}{2}$ and $0 \leq \alpha \leq m$. Our goal is to show that 
\be \label{eq:h0isone}
h^0\Bigl(X, \alpha z_1 + (g-1-\alpha)z_2\Bigr) = 1
\ee
for a generic element $(X,z_1,z_2, \omega) \in \omoduli[g,2](m, 2g-2-m)^{\even}$.
\par 
To see this,  suppose $h^0(X, \alpha z_1 + (g-1-\alpha)z_2) \geq 2$ generically, and we will derive a contradiction. Consider the chain of divisors 
\[ \textbf{D}_\bullet(z_1,z_2) \= 0< z_1< 2z_1<\cdots < \alpha z_1 < \alpha z_1+z_2 <\cdots < \alpha z_1 + (g-1-\alpha)z_2\,. \] 
The hypothesis $h^0(X, \alpha z_1 + (g-1-\alpha)z_2) \geq 2$ implies the existence of a $g^1_{g-1}$ with vanishing orders $(0,g-1)$ with respect to $\textbf{D}_\bullet(z_1,z_2)$. Next, consider the clutching map 
\bes
\bP\omoduli[g-1,1](2g-4)^{\odd} \times
\bP\omoduli[1,3](2-2g, m, 2g-2-m)^{\odd} \rightarrow \bP\Xi \overline{\cM}_{g,2}(m,2g-2-m)^{\even}
\ees
which identifies the zero of order $2g-4$ with the pole of order $2-2g$ on the left-hand side to form a node on the right-hand side. By the preceding hypothesis, the 
underlying curve $(X_0\cup_qE,z_1,z_2)$ of a generic element in the image admits a limit $g^1_{g-1}$ with ramification orders $\geq (0, g-1)$ with respect to $\textbf{D}_\bullet(z_1,z_2)$. Lemma \ref{lemma:two ramification conditions} then implies that the $E$-aspect has vanishing orders $\leq (0,g-1)$ at the node~$q$. 
If the vanishing orders at $q$ are $(0,g-1)$, then using Lemma \ref{lemma:two ramification conditions} again, we obtain that 
\[ (g-1)q\sim \alpha z_1 + (g-1-\alpha)z_2 \,\sim\, \frac{m}{2}z_1 + (g-1-\frac{m}{2})z_2\,. \]
In particular, $z_1-z_2$ has to be torsion. However, this is not true for a generic element $(E, q, z_1, z_2, \omega_E)$ in $\omoduli[1,3](2-2g, m, 2g-2-m)^{\textrm{odd}}$. Hence, the vanishing orders of the $E$-aspect at $q$ are less than or equal to $ (0,g-2)$. Using the definition of limit linear series (see Definition \ref{def:limit linear series}), this implies that the vanishing orders of the $X_0$-aspect at $q$ are greater than or equal to $(1,g-1)$. In particular, a generic element $(X_0,q,\omega_0)$ in the stratum $\omoduli[g-1,1](2g-4)^{\textrm{odd}}$ admits a $g^1_{g-1}$ with vanishing orders greater than or equal to $(1,g-1)$ at $q$. This condition can be rewritten as $h^0(X_0, (g-2)q) \geq 2$, which is false for a generic $(X_0,q,\omega_0)$ in $\omoduli[g-1,1](2g-4)^{\textrm{odd}}$. Hence, our hypothesis was incorrect, and generically, \eqref{eq:h0isone} holds as desired.
\end{proof}
\par
As a consequence of this proposition and the preceding discussion, we obtain:
\par
\begin{cor} \label{cor: same formula}
Let $\mu = (m_1,\ldots, m_n)$ be an even partition of $2g-2$ and $\alpha = (\alpha_1, \ldots, \alpha_n)$ be a partition of $g-1$ satisfying $0\leq \alpha_i \leq m_i$ for all $i$ and $\alpha \neq \frac{\mu}{2}$. Then the statements of Proposition~\ref{prop:Wclass} and Lemma~\ref{le:optdifference} hold for the divisor
\be 
\wt{W}_\mu(\alpha) \= \{ \mathrm{rank}(\phi) < g-1\} \subset \bP\LMS[\mu][g,n]^{\even}.
\ee 
\end{cor}

%%%%%%%%%%%%%%%%%%%%%%%%%%%%%%%%%%%%%%%%%%%%%%%%%%%%%%%%%%
\subsection{Weierstrass divisors with extra vanishing}
\label{sec:VO}
%%%%%%%%%%%%%%%%%%%%%%%%%%%%%%%%%%%%%%%%%%%%%%%%%%%%%%%%%%

Let $\mu = (m_1, \ldots, m_n)$ be a partition of $2g-2$, where all its entries are even, and let $\alpha_i = \frac{m_i}{2}$ for every $1\leq i \leq n$. For the even spin component of the stratum $\bP\omoduli[g,n](\mu)$, the locus $W_{\mu}(\alpha)$, defined in~\eqref{eq:initdefGenW}, contains the entire even spin component by definition. 
\par 
To obtain divisorial conditions on $\bP\omoduli[g,n]^{\even}(\mu)$, we instead consider the partition $\alpha + e_j$,  obtained by adding $1$ to the $j$-th entry of $\alpha$ and leaving the other entries unchanged. In this situation, we consider the \emph{Weierstrass divisors with extra vanishing}: 
\[
W_{\mu}^{+j} ({\mu}/{2}) \= \Bigl\{ (X, \bfz, \omega) \in \bP\omoduli[g,n]^{\even}(\mu)\,:\,
h^0\Bigl(X, z_j + \sum_{i=1}^n \frac{m_i}{2} z_i\Bigr) \geq 3\Bigr \}\,.
\]
In particular, if we restrict to the partition $\mu = (2g-2)$, 
the Weierstrass divisor with extra vanishing is simply
\[ W^+ \coloneqq W_{(2g-2)}^{+1}(g-1)
\= \Bigl\{ (X, z, \omega) \in \bP\omoduli[g,1]^{\even}(2g-2)\,:\,
h^0\bigl(X, g z\bigr) \geq 3\Bigr \}\,.\]
\par
Our approach to computing the class of~$W^+$ is somewhat indirect: we pull back
twisted generalized Weierstrass divisors in genus~$g+1$ via the clutching map~$\zeta_E$. The resulting divisor might not be equal to~$W^+$ (see Corollary~\ref{cor: obtaining Weierstrass divisor}), but it will suffice for our application. For this purpose, we need to determine the exact vanishing order of the morphism~\eqref{eq:defgenWG} along the divisor $D_{\Gamma_1}$.
\par
We perform this computation in a general context, in genus~$g+1$, but we
need to impose a restriction on the partition~$\alpha$. Recall the definition of a good chain of divisors in~Definition~\ref{def:good-D}. 
\par
\begin{definition} Let $\mu = (m_1,\ldots,m_n)$ be a positive partition of~$2g$, and $\alpha = (\alpha_1, \ldots, \alpha_n)$ a partition of~$g$. For an index $1\leq k\leq n$, the partition $\alpha$ is called \emph{$k$-saturated (with
respect to $\mu$)} if $\alpha_k = m_k$ and $\alpha_j \leq m_j$ for each $j\neq k$. Moreover, the partition $\alpha$ is called \emph{good} if there exists a good chain of divisors $\textbf{D}_\bullet(\textbf{z})$ satisfying $D_g = \sum_{i=1}^{n} \alpha_i z_i$. 
\end{definition}
\par
\begin{prop} \label{prop: exact multiplicity} Let $\mu$ be a positive partition
of~$2g$ and let $\alpha = (\alpha_1, \ldots, \alpha_n)$ be a good $k$-saturated partition of $g$. If $\mu$ has at least one odd entry, then the morphism 
\[ \mathcal{H}/\OO(-1) \rightarrow \mathcal{F}_{\alpha} \]
on $\bP\LMS[m_1,\ldots, m_n][g+1,n]$ vanishes along~$D_{\Gamma_1}$ with multiplicity exactly
\[ 1 + 2 + 3 +\cdots + (g-2) + g \= \frac{g (g-1)}{2} + 1.  \]
Similarly, if $\mu$ is an even partition, the morphism $\mathcal{H}/\OO(-1) \rightarrow \mathcal{F}_{\alpha}$ on the stratum $\bP\LMS[m_1,\ldots, m_n][g+1,n]^{\even}$ vanishes along $D_{\Gamma_1}$ with multiplicity exactly $\frac{g (g-1)}{2} + 1.$
\end{prop}
\par
This is exactly one more than the corresponding coefficient in the estimate of $\Delta_\cL \wt{W}_\Gamma$ from
Lemma~\ref{le:optdifference}, where $2\alpha^\bot = m^\bot$ since all legs are
at the bottom in~$\Gamma_1$, and where the second summand equals $g(g-1)/2$.
\par
\begin{proof}
We consider a good $k$-saturated chain of divisors satisfying 
\[ D_g \= \sum_{i=1}^{n} \alpha_i z_i\,. \] 
Let $\pi \colon \mathcal{X} \rightarrow \Delta$ be a $1$-dimensional family of curves
in $\omoduli[g+1](\mu)$, whose central fiber is a generic element of $D_{\Gamma_1}$,  
denoted by $(X_0\cup_q E, z_1,\ldots, z_n)$. Let $\sigma_0, \sigma_1, \sigma_2, \ldots, \sigma_{g-1}$ be sections of $\omega_\pi((2g-1)X_0)$ such that their restrictions to~$E$, denoted by $\sigma^E_0, \sigma^E_1, \sigma^E_2, \ldots, \sigma^E_{g-1}$, have vanishing orders 
	\[(g-1,g+1,g+2,\ldots, 2g-2,2g)\] 
	at the point $q$. Multiplying the sections $\sigma_0, \ldots, \sigma_{g-1}$ by suitable powers of the parameter $t$ on $\Delta$ we obtain sections of $\omega_\pi(E)$. We denote by $\sigma^{X_0}_0, \sigma^{X_0}_1, \sigma^{X_0}_2, \ldots, \sigma^{X_0}_{g-1}$ the restrictions of these sections to $X_0$. For a section $\sigma_i$, we denote by $a_i$ and $b_i$ the vanishing orders of $\sigma_i^E$ and $\sigma_i^{X_0}$ at $q$. The result of \cite[Proposition 2.2]{limitlinearbasic} and its proof imply that $a_i + b_i \geq 2g$, and, in the case of equality, the section $\sigma_i$ vanishes along $X_0$ with multiplicity $a_i$. We know from Proposition \ref{prop:even-general-vanishing} that the limit canonical series is refined. In particular, the sections $\sigma_0, \sigma_1, \sigma_2, \ldots, \sigma_{g-1}$ vanish along $X_0$ with multiplicities exactly $(g-1,g+1,g+2,\ldots, 2g-2,2g)$. Therefore, the sections $t^{g}\sigma_0, t^{g-2}\sigma_1, \ldots, t\sigma_{g-2}$, and $\sigma_{g-1}$ can be viewed as sections of $\omega_\pi$. 
\par 	
From the definition of a good chain of divisors, we conclude that
	\[ \langle\sigma^E_0, \sigma^E_1, \sigma^E_2, \ldots, \sigma^E_{g-1}\rangle\rightarrow \mathcal{F}_{\alpha}\]
	is an isomorphism. Hence, it follows that the morphism $\mathcal{H}/\OO(-1) \rightarrow \mathcal{F}_{\alpha}$
	vanishes with multiplicity $1 + 2 + 3+\cdots + (g-2) +g = \frac{g (g-1)}{2}+1$ along $D_{\Gamma_1}$.
\end{proof}

%Because of Proposition \ref{prop:even-general-vanishing}, the previous proposition can be adapted to strata $\bP\LMS[m_1,\ldots, m_n][g+1,n]^{\even}$. The same proof works for this new context.
%\par

%\begin{prop} \label{prop: even-exact multiplicity} Let $\mu$ be an even positive partition of~$2g$, and let $\alpha = (\alpha_1, \ldots, \alpha_n)$ be a good $k$-saturated partition
%	of $g$. Then the morphism 
%	\[ \mathcal{H}/\OO(-1) \rightarrow \mathcal{F}_{\alpha} \]
%	on $\bP\LMS[m_1,\ldots, m_n][g+1,n]^{\even}$ vanishes 
%	along $D_{\Gamma_1}$ with multiplicity exactly
%	\[ 1 + 2 + 3 +\cdots + (g-2) + g \= \frac{g (g-1)}{2} + 1.  \]
%\end{prop}

Proposition \ref{prop: exact multiplicity} gives us the exact vanishing multiplicity of the morphism $$\mathcal{H}/\OO(-1) \rightarrow \mathcal{F}_{\alpha}$$ along $D_{\Gamma_1}$. In particular, the pullback of the generalized Weierstrass divisor $W_\mu(\alpha)$ via the map 
\bes
{\zeta_E}\colon \bP\LMS[2g-2][g,1]^{\even}  \rightarrow \bP\LMS[\mu][g+1,n] \,
\ees
gives an effective divisor on $\bP\LMS[2g-2][g,1]^{\even}$ and its class is readily computable. 

However, we would like a concrete geometric description of this pullback. In order to do this, we need to specialize to an even partition $ \mu = (2,2m_2,\ldots, 2m_n,0)$
and look for a good saturated partition of $g$ in this situation.  For this purpose, we consider two chains of divisors $\textbf{D}^1_\bullet(\textbf{z})$ and $\textbf{D}^2_\bullet(\textbf{z})$ given as 
\[\textbf{D}^1_\bullet(\textbf{z}) = (0 = D^1_0 < D^1_1 < D^1_2<\cdots < D^1_{2g})\]
and 
\[\textbf{D}^2_\bullet(\textbf{z}) = (0 = D^2_0 < D^2_1 < D^2_2<\cdots < D^2_{2g})\]
satisfying 
\begin{itemize}
	\item $D^1_i = D^2_i$ for $0\leq i \leq g-1$; 
	\item $D^1_{g-1} = D^2_{g-1} = z_1 + m_2 z_2 + \cdots + m_{n-1} z_{n-1} + (m_n-1)  z_n$; 
	\item $D^1_g = z_1 + \sum_{i=2}^{n} m_i z_i$ and  $D^2_g = 2z_1 + \sum_{i=2}^{n-1} m_i z_i + (m_n-1)z_n$; 
	\item $D^1_{g+j} = z_1 + \sum_{i=2}^{n} m_i z_i + j\cdot p$ and $D^2_{g+j} = (2+j)z_1 + \sum_{i=2}^{n-1} m_i z_i + (m_n-1)z_n$ for every $1\leq j \leq g$. 
\end{itemize}

Let $(X_0\cup_qE, \textbf{z}, \omega)$ be a generic element in the boundary divisor $D_{\Gamma_1}$ of the stratum $\bP\LMS[m_1,\ldots, m_n][g+1,n]^{\even}$. Proposition \ref{prop:even-general-vanishing} and Proposition \ref{prop: even cases} imply that $\textbf{D}^2_\bullet(\textbf{z})$ is a good chain of divisors. In particular, $\alpha = (2, m_2,\ldots, m_{n-1}, m_n-1,0)$ is a good $1$-saturated partition of $g$. Proposition \ref{prop: exact multiplicity} implies that the morphism 
 \[ \mathcal{H}/\OO(-1) \rightarrow \mathcal{F}_{\alpha} \]
 vanishes along $D_{\Gamma_1}$ with multiplicity exactly $\frac{g(g-1)}{2} +1$. In this setting, Proposition \ref{prop: even cases} and Corollary \ref{cor: obtaining Weierstrass divisor} imply that: 
 \begin{cor} \label{cor: pullback-Weierstrass} Let $ \mu = (2,2m_2,\ldots, 2m_n,0)$ and  $\alpha = (2, m_2,\ldots, m_{n-1}, m_n-1,0)$ as above. We have the inclusion
 \[ \xi_E^{-1}(\overline{W}_\mu(\alpha)) \subseteq W^+ \cup \ \textrm{boundary divisors}.\]
 \end{cor} 
 
 The following theorem is a consequence of the discussion above and the results in Subsection \ref{subsec:Weierstrass-nonminimal}. The methods of \cite{Kodstrata} can be used to compute the class of the generalized Weierstrass divisor $\overline{W}_\mu(\alpha)$ on even spin components (see Subsection \ref{sec:tWP} and Corollary \ref{cor: same formula}).

\begin{theorem} \label{theorem:class_twisted_Weierstrass}
	The class 
\ba \ 
[W^+] &\,\coloneqq\, \Bigl(\frac{g(g-1)}{2} +1\Bigr)\psi-\lambda_1 + \xi-
\sum_{\Delta\in {\LG_1}} \Bigl(\frac{P-P_{-1}}{8} + \frac{v^\top-1}2\Bigr)\ell_\Delta[D_\Delta] \\ 
&\= \frac{g+11}{2g-2}\lambda_1 -\frac{g+3}{8g-8}[D_h] \\
&\phantom{\=} \quad
- \sum_{\Delta\in {\LG_1}} \Bigr(\frac{\kappa^\bot}{\kappa_{(2g-2)}}
\bigl(1 + \frac1{2g-1}\bigr) - \frac{1}{2g-1}  + \frac{v^\top-1}2\Bigr)
\ell_\Delta [D_\Delta]\\
&\,=:\, w_\lambda \lambda_1 - w_{\hor} [D_h] - \sum_{\Delta\in {\LG_1}} w_\Gamma
\ell_\Delta [D_\Delta]
\ea
is the class of an effective divisor in $\bP\LMS[2g-2][g,1]^{\even}$, which is set-theoretically supported on the boundary divisors and on the Weierstrass divisor with extra
vanishing~$W^+$.
\end{theorem}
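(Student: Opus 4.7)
The idea is to realize $[W^+]$ as the pullback along $\zeta_E$ of a suitable effective divisor in $\bP\LMS[\mu][g+1,n]$, constructed from the twisted generalized Weierstrass divisor of Section~\ref{sec:tWP}. I would fix any positive partition $\mu$ of $2g$ with $n \geq 2$ parts and any $k$-saturated partition $\alpha$ of $g$; the final pullback turns out to be independent of these choices. Starting from $[\wt{W}_\mu(\alpha)] = \sum_i \frac{\alpha_i(\alpha_i+1)}{2}\psi_i - \lambda_1 + \xi$ (Proposition~\ref{prop:Wclass}), the key observation to exploit is that Proposition~\ref{prop: exact multiplicity} pins down the multiplicity of $D_{\Gamma_1}$ in the degeneracy locus as exactly $\frac{g(g-1)}{2}+1$, whereas Lemma~\ref{le:optdifference} applied to $\Gamma_1$ (with $\alpha^\bot = g = m^\bot/2$, $v^\top = 1$, and the unique edge of prong $2g-1$) gives only $\Delta_\cL \wt{W}_{\Gamma_1} \geq \frac{g(g-1)}{2}$. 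This gap of $+1$ is precisely what will produce the $\psi$-coefficient in the final formula.

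\emph{Class computation.} My plan is to twist to realize Lemma~\ref{le:optdifference} for every boundary $\Gamma$, strip off the residual $\ell_\Gamma(v^\top-1)/2$ multiplicity, and additionally subtract one more copy of $D_{\Gamma_1}$. The resulting effective divisor $\wt{W}^\star$ will not contain $D_{\Gamma_1}$, hence will not contain $\textrm{Im}(\zeta_E)$, so its pullback under $\zeta_E$ will be a well-defined effective divisor. I would then apply Proposition~\ref{prop: pullback}: the $\psi_i$-terms vanish, $\lambda_1$ and $\xi$ pull back to themselves, and $[D_{\Gamma_1}] \mapsto -\psi$ contributes the $(\frac{g(g-1)}{2}+1)\psi$ summand. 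For $\Gamma \neq \Gamma_1$, $\zeta_E^*[D_\Gamma]$ is nonzero only when all markings lie on a single bottom vertex, in which case $\Gamma \leftrightarrow \zeta_E^*\Gamma$ is a bijection onto $\LG_1(\bP\LMS[2g-2][g,1]^{\even})$ preserving $\ell$, $P$, $P_{-1}$, and $v^\top$. Moreover $\alpha^\bot - m^\bot/2 = g - g = 0$ for such $\Gamma$, so the coefficient collapses to $\ell_\Gamma((P-P_{-1})/8 + (v^\top-1)/2)$, which should match exactly the sum in the first displayed form of $[W^+]$. The second form then follows by substituting the $\xi$-conversion \eqref{eq:eta-xi}. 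Any slack in the Lemma~\ref{le:optdifference} bound merely adds nonnegative boundary contributions, preserving effectivity.

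\emph{Support and main obstacle.} The set-theoretic support of $\zeta_E^*\wt{W}^\star$ will lie in $W^+ \cup \partial \bP\LMS[2g-2][g,1]^{\even}$ by Corollary~\ref{cor:setth_inclusion}. The heavy lifting is contained in Proposition~\ref{prop: exact multiplicity}: without the extra $+1$ in the multiplicity, the pullback would merely reproduce $-\lambda_1 + \xi$ plus boundary, which is not obviously effective on $\bP\LMS[2g-2][g,1]^{\even}$. A secondary but routine point will be the combinatorial verification of the boundary-graph bijection with preserved invariants, which is immediate from the description of $\zeta_E^*\Gamma$ in Section~\ref{sec:CTclutching}; the only check required is that graphs $\Gamma$ on the $g+1$ side with all markings not concentrated at a single bottom vertex indeed pull back to zero and therefore contribute nothing we need to track.
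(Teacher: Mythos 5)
Your proposal follows essentially the same route as the paper: start from the Porteous class of Proposition~\ref{prop:Wclass}, subtract the twisting gains of Lemma~\ref{le:optdifference} (plus the residual $\ell_\Gamma(v^\top-1)/2$ vanishing) for $\Gamma\neq\Gamma_1$ and the exact multiplicity $\tfrac{g(g-1)}{2}+1$ of Proposition~\ref{prop: exact multiplicity} along $D_{\Gamma_1}$, then pull back via $\zeta_E$ using Proposition~\ref{prop: pullback} and invoke Corollary~\ref{cor:setth_inclusion} for the support claim. This matches the paper's argument step for step (your explicit remark about stripping the $(v^\top-1)/2$ term and the identity $\alpha^\bot=m^\bot/2=g$ for the surviving boundary graphs is exactly what the paper uses), so no further comparison is needed.
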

\par
We remark that this class differs from the expression of the twisted generalized
Weierstrass divisor in the odd spin case by adding~$\psi$ with coefficient one.
Note that
\be
\kappa_{(2g-2)} \= \frac{4g(g-1)}{2g-1}, \qquad \kappa^\bot \= \kappa_{(2g-2)}
- (P - P_{-1})\,.
\ee 
\par
\begin{proof} Let $ \mu = (2,2m_2,\ldots, 2m_n,0)$ and  $\alpha = (2, m_2,\ldots, m_{n-1}, m_n-1,0)$.  Let $\delta^{\min}_\cL$ denote the right-hand side
of~\eqref{eq:Deltaestimate}.  As a consequence of Corollary~\ref{cor: same formula} and
Proposition~\ref{prop: exact multiplicity}, the class 
\ba \label{eq:class in g+1}
\sum_{i=1}^{n}\frac{\alpha_i(\alpha_i+1)}{2}\psi_i - \lambda_1 +\xi &- \Bigl(\frac{(g-1)g}{2}+1\Bigr)D_{\Gamma_1} - \sum_{\Gamma\in \LG_1, \Gamma\neq \Gamma_1}
\delta^{\min}_\cL \cdot [D_\Gamma]\,
\ea
represents an effective divisor in $\bP\LMS[\mu][g+1,n]^{\even}$ which does not contain $D_{\Gamma_1}$
in its support. The pullback of this class via the map~$\zeta_E$ 
can be computed using Proposition~\ref{prop: pullback}. The expression simplifies
because all boundary divisors of $\bP\LMS[\mu][g+1,n]$ that pull back non-trivially
have all their marked points on level~$-1$, leading to $\alpha^\bot =\frac{1}{2}m^\bot = g$
for all divisors with non-trivial pullback. Using that
$ (P-P_{-1})/8 = \vartheta_{\mu, \alpha}{\kappa_{\mu_\Gamma^{\bot}}}/ {\kappa_\mu}
- \vartheta_{\mu_\Gamma^{\bot}, \alpha_\Gamma^{\bot}}$
yields the expression claimed in the theorem.

Finally, Corollary \ref{cor: pullback-Weierstrass} gives an explicit geometric description of the pullback as the union of $W^+$ and boundary divisors. 
\end{proof}

\par

Next, we consider Weierstrass divisors with extra vanishing when the even partition $\mu$ is not minimal (i.e., $\mu \neq (2g-2)$). We will show that these loci are indeed divisors and that they have distinct supports. 

\begin{prop} \label{prop: properdistinctdivisors} Let $\mu = (m_1,\ldots, m_n)$ be a positive partition of $2g-2$, where all its entries are even, and let $\alpha = (\alpha_1, \ldots, \alpha_n)$ be the partition given by $\alpha_i = \frac{m_i}{2}$ for all $1\leq i \leq n$. Then, for every $1 \leq j \leq n$, the locus
	\[
	W_{\mu}^{+j} \= \Bigl\{ (X, \bfz, \omega) \in \bP\omoduli[g,n]^{\even}(\mu)\,:\,
	h^0\Bigl(X, z_j + \sum_{i=1}^n \alpha_i z_i\Bigr) \geq 3\Bigr \}\,
	\]
	has a divisorial component. Moreover, these $n$ divisors obtained as $j$ varies from $1$ to $n$ have distinct supports.  
\end{prop}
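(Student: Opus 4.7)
The strategy parallels the proof of Proposition~\ref{prop:properdivisor}.

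\emph{Translation via Serre duality.} Since $K_X = \sum_i m_i z_i = 2\sum_i \alpha_i z_i$ on the stratum, Riemann--Roch yields
\[h^0\bigl(X, z_j + \textstyle\sum_i \alpha_i z_i\bigr) \= h^0\bigl(X, \textstyle\sum_i \alpha_i z_i - z_j\bigr) + 1,\]
where the right-hand divisor is effective because $\alpha_i = m_i/2 \geq 1$. Hence $W_\mu^{+j}$ is the locus $\{h^0(\sum_i \alpha_i z_i - z_j) \geq 2\}$. On the even spin stratum, a generic point satisfies $h^0(\sum_i \alpha_i z_i) = 2$, so there is an ``extra Weierstrass section'' $s_1$, linearly independent from the constant~$1$ and well-defined up to scalar modulo constants. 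Since $1$ has no pole at $z_j$, the locus $W_\mu^{+j}$ is precisely where $s_1$ fails to attain its maximal pole order $\alpha_j$ at $z_j$, i.e., where the coefficient of $t^{-\alpha_j}$ in the Laurent expansion of $s_1$ at $z_j$ vanishes.

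\emph{Part (a): divisoriality.} The Porteous-type morphism $\mathcal{H}/\mathcal{O}(-1) \to \mathcal{F}_{\alpha+e_j}$ from Proposition~\ref{prop:Wclass} identifies $W_\mu^{+j}$ as a degeneracy locus, so every component has codimension at most one. To show the locus is a proper divisor, I would exhibit an element of the stratum at which $s_1$ does attain its maximal pole at $z_j$, following the clutching strategy of Proposition~\ref{prop:properdivisor}: reduce the situation by moving marked points onto a rational tail through a boundary degeneration, and then compute $h^0(\sum_i \alpha_i z_i - z_j) = 1$ for a generic such degeneration using the limit canonical series analysis of Proposition~\ref{prop:even-general-vanishing}, adapted to the compact-type boundary at hand.

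\emph{Part (b): distinct supports.} For each pair $j \neq j'$, it suffices to exhibit a point of $W_\mu^{+j} \setminus W_\mu^{+j'}$. I would modify the degenerate family built in part~(a) by adjusting the free attaching data so that $s_1$ loses its maximal pole at $z_j$ while retaining it at $z_{j'}$. The conditions defining $W_\mu^{+j}$ and $W_\mu^{+j'}$ constrain genuinely independent Laurent coefficients of the single section $s_1$, so such a modification is possible; this produces the required distinguishing point.

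\emph{Main obstacle.} The delicate step is the limit-linear-series bookkeeping: Proposition~\ref{prop:even-general-vanishing} is tailored to the clutching $\zeta_E$ of minimal strata and tracks only vanishing sequences of the $E$-aspect with respect to divisorial chains, not the precise Laurent coefficients of the extra section $s_1$ at individual marked points. Extending this analysis to non-minimal signatures while recording which Laurent coefficients of $s_1$ remain free parameters under the degeneration---and exploiting this freedom to independently realize the condition at each $z_j$---is the technical heart of the argument, analogous in spirit to the compact-type limit linear series computations in the proof of Proposition~\ref{prop:properdivisor}.
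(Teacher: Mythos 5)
Your overall shape is reasonable (Serre duality to translate the condition into $h^0(\sum_i\alpha_i z_i - z_j)\geq 2$, a codimension bound, and boundary degenerations to separate the $n$ loci), but both halves have problems and the second is a genuine gap. For part (a): the degeneracy locus of $\mathcal{H}/\mathcal{O}(-1)\to\mathcal{F}_{\alpha+e_j}$ where the rank drops to $g-2$ has \emph{expected codimension two}, not one, because the target now has rank $g$ while the source has rank $g-1$; so ``identifies $W^{+j}_\mu$ as a degeneracy locus, hence every component has codimension at most one'' is not correct as stated. Your Laurent-coefficient picture rescues the bound only on the open locus where $h^0(\sum_i\alpha_i z_i)=2$ (there the condition is the vanishing of one section of a line bundle), and you never address non-emptiness, which is needed to produce a divisorial component. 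The paper avoids both issues by working in $\mathcal{M}_{g,n}$: a Hurwitz-space dimension count shows that $H_\alpha=\{h^0(\sum_i\alpha_i z_i)\geq2\}$ and $H_{\alpha-e_j}$ have codimension exactly $2$ and $3$, so the latter is a genuine divisor inside the former, and the even spin stratum maps into $H_\alpha$; properness then follows immediately from Proposition~\ref{prop:properdivisor} applied to the partition $\alpha-e_j+e_k$, since $h^0(\sum_i\alpha_i z_i-z_j)\le h^0(\sum_i\alpha_i z_i-z_j+z_k)$. That one-line reduction replaces the clutching computation you propose to redo.

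For part (b) your argument is circular: ``the conditions constrain genuinely independent Laurent coefficients of $s_1$, so such a modification is possible'' is precisely the assertion to be proved, since distinctness of the supports \emph{is} the independence of these conditions. The paper supplies an actual construction: glue a generic $(X_0,q)\in\omoduli[g-1,1](2g-4)^{\even}$ to an elliptic curve $(E,q,z_1,\dots,z_n)$ chosen so that $(g-2)q\sim -z_j+\sum_i\tfrac{m_i}{2}z_i$. This point of $D_{\Gamma_1}$ carries a limit $g^1_{g-2}$ with the vanishing required for $W^{+j}_\mu$ and smooths into its closure, while for $k\neq j$ the non-equivalence $(g-2)q\not\sim -z_k+\sum_i\tfrac{m_i}{2}z_i$ forces, via Lemma~\ref{lemma:two ramification conditions} and the compatibility inequalities of Definition~\ref{def:limit linear series}, the $X_0$-aspect to satisfy $h^0(X_0,(g-3)q)\geq2$, contradicting Theorem~\ref{thm:bullock}. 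You correctly identify this limit-linear-series bookkeeping as the technical heart of the statement, but flagging the obstacle is not the same as overcoming it; without this construction the distinctness claim is not established.
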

\par
\begin{proof}
     Consider $(X_0, q, \omega_0)$ generic in $\omoduli[g-1,1](2g-4)^{\even}$ and let 
	\[ (E,q,z_1,\ldots, z_n, \omega_1) \in \omoduli[1,n+1](2-2g, m_1,\ldots, m_n)\]
	which satisfies the equivalence of divisors 
	\[(g-2)q \sim  -z_j + \sum_{i=1}^{n}\frac{m_i}{2}z_i\,.\] 
	Gluing the two underlying curves yields an element in the boundary $D_{\Gamma_1}$ of $\bP\omoduli[g,n]^{\even}(\mu)$. Our goal is to show that $(X_0\cup_qE, \mathbf{z}, \omega)$ is contained in a divisorial component of $W_\mu^{+j}$. For this purpose, we will find an open subset $U\subseteq \bP\omoduli[g,n]^{\even}(\mu)$, containing $(X_0\cup_qE, \mathbf{z}, \omega)$, on which $W_\mu^{+j}$ is defined as a degeneracy locus of a morphism of vector bundles of rank $g-2$. 
	
	Consider the vector bundle morphisms $\varphi_\alpha\colon \mathcal{H} \rightarrow \mathcal{F}_\alpha$ and $\tau_j\colon \mathcal{F}_{\alpha} \rightarrow \mathcal{F}_{\alpha-e_j}$, respectively.  Since $h^0(X, \sum_{i=1}^n\alpha_iz_i) = 2$ generically on $ \bP\omoduli[g,n]^{\even}(\mu)$, there exists an open subset $U\subseteq \bP\omoduli[g,n]^{\even}(\mu)$ on which $\textrm{Im}(\varphi_\alpha)$ is a vector bundle of rank $g-2$. Moreover, for a limit canonical series on $(X_0\cup_qE, \mathbf{z}, \omega)$, the vanishing orders of the $E$-aspect at $q$ are greater than or equal to $(0,g-2, g,\ldots, 2g-4,2g-2)$. 
    
    We consider a chain of divisors 
      \[\textbf{D}^j(z_1, \ldots, z_n) \= \Bigl(0 = D_0 < D_1 < \cdots < D_{2g-2} \Bigr)\,\]
    satisfying $D_{g-2} = -z_j + \sum_{i=1}^{n} \alpha_iz_i$, $D_{g-1} = \sum_{i=1}^{n} \alpha_iz_i$, and $D_{2g-2} = \sum_{i=1}^{n} m_iz_i$. We will now use Lemma \ref{lemma:two ramification conditions} to describe the vanishing orders of the $E$-aspect with respect to the chain of divisors $\textbf{D}^j(z_1, \ldots, z_n)$. Note that there cannot be three vanishing orders greater than or equal to $g-1$, since there are only two vanishing orders at $q$ smaller than or equal to $g-1$. 
    
    Let $\mathcal{V}_E$ be the $E$-aspect of the limit canonical series. The map $\varphi_\alpha$ extends over $(X_0\cup_qE, \mathbf{z}, \omega)$ to a map $\mathcal{V}_E \rightarrow \mathcal{F}_\alpha$, and the image of this map has dimension $g-2$, since there are only two sections vanishing with order greater than or equal to $g-1$ at $\textbf{D}^j(z_1, \ldots, z_n)$. In particular, we can take the open $U$ to contain $(X_0\cup_qE, \mathbf{z}, \omega)$. 
    
    Next, we consider the morphism of vector bundles of rank $g-2$ on $U$ defined as the restriction of $\tau_j$ to $\textrm{Im}(\varphi_\alpha)$, i.e., $\overline{\tau}_j\colon \textrm{Im}(\varphi_\alpha) \rightarrow \mathcal{F}_{\alpha-e_j}$. In the interior of the stratum, the degeneracy locus of $\overline{\tau}_j$ is contained inside $W^{+j}_\mu$. On the boundary, we claim that $(X_0\cup_qE, \mathbf{z}, \omega)$ is contained in the degeneracy locus of $\overline{\tau}_j$. To see this, first note that the vanishing orders of the $E$-aspect at $q$ are greater than or equal to $(0,g-2, g,\ldots, 2g-4,2g-2)$. Lemma \ref{lemma:two ramification conditions} then implies that the vanishing orders of the $E$-aspect with respect to $\textbf{D}^j(z_1, \ldots, z_n)$ are smaller than or equal to $(0,1,2,\ldots, g-4, g-2,g-1, 2g-2)$. In fact, these are exactly the vanishing orders: 
    \begin{itemize}
    	\item The first $g-3$ entries are the lowest possible, and hence cannot be lower.
    	\item For $(X, \mathbf{z}, \omega)$ generic in $\bP\omoduli[g,n]^{\even}(\mu)$, the last two vanishing orders with respect to $\textbf{D}^j(z_1, \ldots, z_n)$ are greater than or equal to $g-1$ and $2g-2$, and hence the same is true for a limit canonical series in the boundary. 
    	\item The third biggest order is $g-2$ because of the equivalence $(2g-2)q \sim gq + D_{g-2}$. 	If the third biggest order were $g-3$, then Lemma \ref{lemma:two ramification conditions} would imply the existence of a section $\sigma$ vanishing with order $g$ at $q$ and order $g-3$ with respect to $\textbf{D}^j(z_1, \ldots, z_n)$. However, because $2gq \sim \div(\sigma)$, this section would vanish with order $g-2$ with respect to $\textbf{D}^j(z_1, \ldots, z_n)$, leading to a contradiction. 
    \end{itemize}  
 Since the last three vanishing orders with respect to $\textbf{D}^j(z_1, \ldots, z_n)$ are greater than or equal to $g-2$, it follows that $\overline{\tau}_j$
 degenerates at $(X_0\cup_qE, \mathbf{z}, \omega)$, as claimed. 
 
 For a generic element of the boundary divisor $D_{\Gamma_1}$, the vanishing orders with respect to $\textbf{D}^j(z_1, \ldots, z_n)$ are $(0,1,\ldots, g-3, g-1, 2g-2)$, and hence $\overline{\tau}_j$ does not degenerate along this divisor. On the other hand, the morphism $\overline{\tau}_j$ degenerates along the locus of curves $(X_0\cup_qE, \mathbf{z}, \omega)$ inside $D_{\Gamma_1}$ where the equivalence 	$(g-2)q \sim  -z_j + \sum_{i=1}^{n}\frac{m_i}{2}z_i$ is satisfied on the elliptic component $E$. Note that this locus has codimension $2$ in $ \bP\omoduli[g,n]^{\even}(\mu)$, while the locus where $\overline{\tau}_j$ degenerates has codimension $1$. Therefore, the former is contained in the boundary of a divisorial component of $W_\mu^{+j}$.
 
 Finally, we argue that  $(X_0\cup_qE, \mathbf{z}, \omega)$ is not contained in the boundary of any other Weierstrass divisor $W_\mu^{+k}$ for $k \neq j$. Indeed, the vanishing orders of the $E$-aspect with respect to $\textbf{D}^k(z_1, \ldots, z_n)$ are $(0,1,2,\ldots, g-3,g-1,2g-2)$. Because the third biggest vanishing order is smaller than $g-2$, it follows that the map $\overline{\tau}_k$ does not degenerate at $(X_0\cup_qE, \mathbf{z}, \omega)$, and hence it is not contained in $W^{+k}_\mu$. 
\end{proof}

\par 
Because generalized Weierstrass divisors share several similarities with the classical Weierstrass divisors, it is natural to ask whether they are also irreducible. It is an immediate consequence of Proposition \ref{prop: properdistinctdivisors} that this is not always the case. 
\par	
\begin{prop} \label{prop:genWisreducible}
  Let $n\geq 2$ and $\mu = (m_1,\ldots,m_n)$ a partition of $2g-2$ with all entries even. For any $1\leq j\neq k \leq n$, consider the partition $\alpha= (\alpha_1,\ldots, \alpha_n)$ of $g-1$ defined as 
	\[ \alpha \= \Bigl(\frac{m_1}{2}, \ldots, \frac{m_n}{2}\Bigr)-e_j + e_k\,,\]
where $e_j$ and $e_k$ are the standard unit vectors with all but one entry equal to zero. 
Then the generalized Weierstrass divisor 
\[
W_{\mu}(\alpha) \= \Bigl\{ (X, \bfz, \omega) \in \bP\omoduli[g,n](\mu)^{\even}\,:\,
h^0\Bigl(X, \sum_{i=1}^n \alpha_i z_i\Bigr) \geq 2\Bigr \}\, 
\]
is reducible.  
\end{prop}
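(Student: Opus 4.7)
The plan is to exhibit two divisors from Proposition~\ref{prop: properdistinctdivisors}, namely $W^{+j}_\mu$ and $W^{+k}_\mu$, as distinct irreducible components sitting inside $W_\mu(\alpha)$. First, I verify that $W_\mu(\alpha)$ itself is a proper divisor by applying Proposition~\ref{prop:properdivisor} to $\alpha = \mu/2 - e_j + e_k$: the hypothesis $\alpha \neq \mu/2$ holds because $j \neq k$, while the constraint $0 \leq \alpha_i \leq m_i$ follows from the fact that every entry of $\mu$ is a positive even integer, so in particular $m_j, m_k \geq 2$.

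The heart of the argument is the pair of set-theoretic inclusions $W^{+j}_\mu \subseteq W_\mu(\alpha)$ and $W^{+k}_\mu \subseteq W_\mu(\alpha)$. Setting $D := \sum_{i=1}^n \alpha_i z_i = -z_j + z_k + \sum_{i=1}^n \tfrac{m_i}{2} z_i$, the key observation is that $\deg D = g-1$, and since $K_X \sim \sum_{i=1}^n m_i z_i$ on the stratum, Serre duality yields
\[
h^0(X, D) \= h^0\!\Bigl(X,\; z_j - z_k + \sum_{i=1}^n \tfrac{m_i}{2} z_i\Bigr).
\]
The inclusion $W^{+k}_\mu \subseteq W_\mu(\alpha)$ is direct: if $h^0(X, z_k + \sum \tfrac{m_i}{2} z_i) \geq 3$, then subtracting the point $z_j$ drops $h^0$ by at most one, giving $h^0(X, D) \geq 2$. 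The inclusion $W^{+j}_\mu \subseteq W_\mu(\alpha)$ is its Serre-dual counterpart: if $h^0(X, z_j + \sum \tfrac{m_i}{2} z_i) \geq 3$, removing $z_k$ yields $h^0(X, z_j - z_k + \sum \tfrac{m_i}{2} z_i) \geq 2$, which equals $h^0(X, D)$ by the identity displayed above.

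Combining these inclusions with Proposition~\ref{prop: properdistinctdivisors}, both $W^{+j}_\mu$ and $W^{+k}_\mu$ possess divisorial components with distinct supports, and both lie inside the divisor $W_\mu(\alpha)$. Each such irreducible codimension-one closed subvariety of $\bP\omoduli[g,n](\mu)^{\even}$ contained in the divisor $W_\mu(\alpha)$ must appear as one of its irreducible components, and the two components differ because their supports do. Hence $W_\mu(\alpha)$ has at least two distinct irreducible components and is therefore reducible. The argument has no real obstacle; the only point demanding care is the Serre-dual identification that translates $W^{+j}_\mu$ into a condition directly comparable to that defining $W_\mu(\alpha)$, while all the substantive input is supplied by Propositions~\ref{prop:properdivisor} and~\ref{prop: properdistinctdivisors}.
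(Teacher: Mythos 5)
Your proposal is correct and follows essentially the same route as the paper's proof: Serre duality identifies the condition defining $W_\mu(\alpha)$ with its dual form, which exhibits both $W^{+j}_\mu$ and $W^{+k}_\mu$ as subloci of $W_\mu(\alpha)$, and Proposition~\ref{prop: properdistinctdivisors} supplies two divisorial components with distinct supports. The only differences are cosmetic: you work with the $h^0 \geq 3$ formulation and drop a point, where the paper passes directly to the Serre-dual $h^0 \geq 2$ condition, and you add the (harmless, slightly more careful) explicit appeal to Proposition~\ref{prop:properdivisor} to confirm $W_\mu(\alpha)$ is a proper divisor.
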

\begin{proof}
We consider the Weierstrass divisors with extra vanishing: 
\[ 
W_{\mu}^{+j} \= \Bigl\{ (X, \bfz, \omega) \in \bP\omoduli[g,n](\mu)^{\even}\,:\,
h^0\Bigl(X, -z_j +\sum_{i=1}^n \frac{m_i}{2} z_i\Bigr) \geq 2\Bigr \}\,
\]
and 
\[ 
W_{\mu}^{+k} \= \Bigl\{ (X, \bfz, \omega) \in \bP\omoduli[g,n](\mu)^{\even}\,:\,
h^0\Bigl(X, -z_k  +\sum_{i=1}^n \frac{m_i}{2} z_i\Bigr) \geq 2\Bigr \}\,.
\]
Proposition~\ref{prop: properdistinctdivisors} shows that they have distinct supports. Moreover, by Serre duality, the condition defining $W_\mu(\alpha)$ is 
\[ h^0\Bigl(X, -z_k + z_j +\sum_{i=1}^n \frac{m_i}{2} z_i\Bigr) =  h^0\Bigl(X, -z_j + z_k +\sum_{i=1}^n \frac{m_i}{2} z_i\Bigr) \geq 2\,,\]
and hence $W_{\mu}^{+j} \subseteq W_\mu(\alpha)$ as well as $W_{\mu}^{+k} \subseteq W_\mu(\alpha)$. It follows that $W_\mu(\alpha)$ is reducible. 
\end{proof}
\par
To our knowledge, this is the first known example of reducible generalized Weierstrass divisors. 

%%%%%%%%%%%%%%%%%%%%%%%%%%%%%%%%%%%%%%%%%%%%%%%%%%%%%%%%%%%%

%%%%%%%%%%%%%%%%%%%%%%%%%%%%%%%%%%%%%%%%%%%%%%%%%%%%%%%%%%%%
%%%%%%%%%%%%%%%%%%%%%%%%%%%%%%%%%%%%%%%%%%%%%%%%%%%%%%%%%%
\section{Certifying general type}
\label{sec:gentype}
%%%%%%%%%%%%%%%%%%%%%%%%%%%%%%%%%%%%%%%%%%%%%%%%%%%%%%%%%%

To prove Theorem~\ref{intro:minimal}, we apply
Proposition~\ref{prop:GenTypeCrit}: There exists an effective linear combination of boundary divisors (denoted by $\cL \otimes \cO_{\bP\MScoarse}(-D)$ in
\cite[Proposition~3.2]{Kodstrata}) such that
\be \label{eq:ampleclass}
A \= \lambda + \epsilon (\cL \otimes \cO_{\bP\MScoarse}(-D))
\ee
is ample (see the proof of \cite[Theorem~1.1]{Kodstrata}). We will exhibit the
effective divisor~$E$ as a convex combination of the Weierstrass
divisor with extra vanishing and a Brill--Noether divisor. More precisely,
we follow the strategy of \cite[Section~8]{Kodstrata} and consider
the sum
\bas
& \frac{\kappa_{{(2g-2)}}}{2g} \Bigl( c_1\bigl(K_{\bP\MScoarse}\bigr)
- D_{\mathrm{NC}} \Bigr)
- y\frac{12}{w_\lambda}
[{W}^{+}] - (1-y)\cdot 2[{\BN}_{\mu}] \\
& \qquad \= s_{\hor}(y) [D_h] + \sum_{\Gamma \in \LG_1}  \ell_\Gamma
s_\Gamma(y) [D^{\rm H}_\Gamma] 
\eas
where
\ba
\label{eq:horstar}
s_{\hor}(y) &\= -1 -\frac{2g-2}{2g-1} +y
\frac{12 w_{\hor}}{w_\lambda}
+(2-2y)\frac{g+1}{g+3} \\ 
s_\Gamma(y) &\= c_\Gamma + y \frac{12 w_{\Gamma}}{w_{\lambda}}
+ (1-y)b_\Gamma\,
\ea
and the expressions of $w_{\hor}$, $w_\Gamma$, and $w_\lambda$ can be found in~\cite[Corollary 7.7]{Kodstrata}. 
With $b_{\NC}^\Gamma$ defined in~\eqref{eq:non-canonical-term}, we let
\ba
c_\Gamma &\= \frac{2g-2}{2g-1} \Big(N_\Gamma^{\bot} -
R_\Gamma \Big) -  \kappa_{\mu_{\Gamma}^{\bot}}\,,
\qquad
R_\Gamma =
\frac{b_{\mathrm{NC}}^\Gamma  + 1 +\delta_{\Gamma}^{\rm H}} {\ell_\Gamma}\,,
\\
b_\Gamma &\= \sum_{i=1}^{[g/2]}\sum_{e\in E(\Gamma)
	\atop e \mapsto \Delta_i} \frac{12i(g-i)}{(g+3) p_e} +  
\sum_{e\in E(\Gamma) \atop e \mapsto \Delta_{\irr}} \frac{2(g+1)}{(g+3) p_e}\,,  \\
\ea
and finally, $\delta_\Gamma^H$ is one if~$D_\Gamma$ has components
that are HBB, and zero otherwise. The goal is to find~$y \in [0,1]$ such
that~$s_{\hor}(y)$ and all the coefficients~$s_\Gamma(y)$ are strictly positive 
since we can then take~$\epsilon$ in~\eqref{eq:ampleclass} sufficiently small and
apply Proposition~\ref{prop:GenTypeCrit}. 
Inserting the above constants leads to the following result. 
\par
\begin{lemma} \label{le:shorLB}
The coefficient $s_{\hor}$ is positive if and only if
\bes
y \geq y_{\hor} := (7g + 77)/(2g^2 - 11g + 5)\,.
\ees
\end{lemma}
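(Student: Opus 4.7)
The plan is to substitute the explicit formulas for $w_\lambda$ and $w_{\hor}$ from Theorem~\ref{theorem:class_twisted_Weierstrass} into the definition \eqref{eq:horstar} of $s_{\hor}(y)$, simplify to an affine function of $y$, verify that its slope is positive in the relevant range of $g$, and solve the resulting linear equation for the root $y_{\hor}$.

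Concretely, Theorem~\ref{theorem:class_twisted_Weierstrass} gives $w_\lambda = (g+11)/(2g-2)$ and $w_{\hor} = (g+3)/(8g-8)$, so the ratio simplifies to
\[
\frac{12\,w_{\hor}}{w_\lambda} \= \frac{12(g+3)}{8(g-1)} \cdot \frac{2(g-1)}{g+11} \= \frac{3(g+3)}{g+11}.
\]
Writing $s_{\hor}(y) = A + By$ with
\[
A \= -1 - \frac{2g-2}{2g-1} + \frac{2(g+1)}{g+3}, \qquad B \= \frac{3(g+3)}{g+11} - \frac{2(g+1)}{g+3},
\]
a direct calculation with common denominators yields the clean factorizations
\[
A \= \frac{-7(g-1)}{(2g-1)(g+3)}, \qquad B \= \frac{(g-1)(g-5)}{(g+11)(g+3)}.
\]

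For $g \geq 6$ (in particular for all $g$ treated in Theorem~\ref{intro:minimal}) we have $B>0$, so $s_{\hor}(y)$ is strictly increasing in $y$ and vanishes at the unique root $y_{\hor} = -A/B$. Computing
\[
-\frac{A}{B} \= \frac{7(g-1)}{(2g-1)(g+3)} \cdot \frac{(g+11)(g+3)}{(g-1)(g-5)} \= \frac{7(g+11)}{(2g-1)(g-5)} \= \frac{7g+77}{2g^2-11g+5}
\]
gives exactly the asserted value of $y_{\hor}$. Since $B>0$, positivity of $s_{\hor}(y)$ is then equivalent to $y \geq y_{\hor}$.

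The only step requiring any care is the rational-function simplification, where the fortuitous cancellations of the common factor $(g-1)$ in $A$ and $B$ make the formula for $y_{\hor}$ as simple as claimed; there is no real obstacle beyond routine algebra.
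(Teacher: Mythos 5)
Your computation is correct and is exactly the substitution-and-simplification the paper intends (it states the lemma with only the remark ``Inserting constants from above lead to the following result''); the factorizations $A = -7(g-1)/\bigl((2g-1)(g+3)\bigr)$ and $B = (g-1)(g-5)/\bigl((g+11)(g+3)\bigr)$ and the resulting root $y_{\hor} = 7(g+11)/\bigl((2g-1)(g-5)\bigr)$ all check out. The only (harmless) quibble, inherited from the lemma's own phrasing, is that at $y = y_{\hor}$ the coefficient vanishes rather than being strictly positive.
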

\par
In order to control the coefficients of the other boundary divisors, still following the strategy of \cite[Section~8]{Kodstrata}, recall from~\eqref{eq:defkappamu} and Theorem~\ref{theorem:class_twisted_Weierstrass} that
\bas
N^\top &\= 2g^\top - v^\top = P + v^\top, \qquad \kappa^\top = P - P_{-1}\,, \\
y \frac{12 w_{\Gamma}}{w_{\lambda}}
&\ = 12 y \Bigl( \ol{w}_\Gamma + \frac{(g-1)(v^\top-1)}{g+11} \Bigr)\,, \qquad
\ol{w}_\Gamma := \frac{2g-2  - P + P_{-1}}{(g+11)}\,, 
\eas
so that it suffices to control the positivity of
\begin{flalign} 
s_\Gamma(y)
&\,\geq\, \Big(\frac{12(g-1)}{g+11}y-\frac{2g-2}{2g-1}\Big)(v^\top-1)
+(1-y)b_\Gamma- P_{{-1}} -\frac{2g-2}{2g-1} R
\label{eq:gencGammaMainEstimate} \\	
&\,+\, \frac{1}{2g-1} P \,+\,12 y\cdot \ol{w}_{\Gamma}
-\frac{2g-2}{2g-1}   \qquad =: T_1 + T_2\,,
 \nonumber 
\end{flalign}
where $T_1$ and $T_2$ refer to the terms in the first and second lines, respectively.  
\par
\begin{proof}[Proof of Theorem~\ref{intro:minimal}]
We start with the strategy for large~$g$. First, we need that
\be \label{eq:vtoppos}
y \geq (g + 11)/(12g - 6)
\ee
so that the coefficient of $v^\top -1$ in $T_1$ is positive. Second, if
\ba \label{eq:T1pos}
y \,\leq\, \frac{g - 5}{4g - 4}  & \qquad \text{if $g$ is odd}; \\
y \,\leq\, \frac{g^2 - 7g}{4g^2 + 16g - 8}  & \qquad \text{if $g$ is even}, \\
\ea
then the remaining term of $T_1$ is positive by the proof of
\cite[Lemma~8.6]{Kodstrata} (see the NCT-term; the lemma gives the large~$g$ limit and use the fact that there are no RBT graphs for the minimal strata), except if~$\Gamma$ is a HBB graph, in which case there is an additional term of $-1/\ell_\Gamma$. Since in a minimal stratum either the bottom-level vertex has positive genus, or there are at least two top-level vertices, or there are at least two edges from the unique top-level vertex, we know that $P \leq 2g-3$. This implies that $T_2 \geq P_{-1} \geq 1/\ell_\Gamma$ if
\be \label{eq:T2pos}
((24P_{-1} + 24)g -12P_{-1} - 1))y  - g - 11 \geq P_{-1}\,.
\ee
This is a lower bound on~$y$, which decreases as a function of $P_{-1}$. The critical case is thus $P_{-1} = 1$, which gives the lower bound
\be \label{eq:T2posMax}
y \geq (g + 12)/(48g-24)\,.
\ee
This implies~\eqref{eq:T2pos}. We need to control the lower bounds given by
$y_{\hor}$, by~\eqref{eq:vtoppos}, and by~\eqref{eq:T2posMax} against the upper bound
given by~\eqref{eq:T1pos}. This coarse strategy can only work while the lower bound in Lemma~\ref{le:shorLB} does not contradict the upper bound in~\eqref{eq:T1pos}, i.e., for $g \geq 29$. Indeed, taking $y = y_{\hor} + 10^{-5}$ works for
odd $g$ in the range $31 \leq g \leq 45$ and for
even $g$ in the range $34 \leq g \leq 46$, while taking $y= 0.15$ works for all $g \geq 47$.
\end{proof}

\printbibliography

\end{document}